\documentclass{article}
\usepackage{fullpage}
\usepackage{hyperref}
\usepackage{xcolor}
\usepackage{amsthm}
\usepackage{amsmath,amsfonts,amssymb}
\usepackage{graphicx}
\usepackage{subcaption}
\usepackage{mathrsfs}
\usepackage[margin=1.0in]{geometry}
\usepackage{appendix}
\usepackage{accents}

\usepackage{lipsum}
\usepackage{amsfonts}
\usepackage{graphicx}
\usepackage{epstopdf}
\usepackage{algorithmic}
\ifpdf
  \DeclareGraphicsExtensions{.eps,.pdf,.png,.jpg}
\else
  \DeclareGraphicsExtensions{.eps}
\fi
\usepackage[shortlabels]{enumitem}

\usepackage{mathrsfs}
\usepackage{dsfont}
\usepackage[margin=1.0in]{geometry}

\usepackage[shortlabels]{enumitem}
\newtheorem{theorem}{Theorem}[section]
\newtheorem{remark}[theorem]{Remark}
\newtheorem{proposition}[theorem]{Proposition}

\newtheorem{lemma}[theorem]{Lemma}
\newtheorem{corollary}[theorem]{Corollary}

\usepackage{accents}

\newcommand{\jay}{{\mathfrak{j}}}
\newcommand{\que}{{\mathfrak{q}}}
\newcommand{\sfk}{{\mathsf k}}
\newcommand{\sfl}{\mathsf{l}}
\newcommand{\sfs}{{\mathsf s}}

\newcommand{\sfu}{{\mathsf u}}
\newcommand{\sfv}{{\mathsf v}}

\usepackage[export]{adjustbox}
\newcommand{\tb}[1]{\textcolor{black}{#1}}

\newcommand{\mS}{\mathcal{S}}

\newcommand{\mV}{{\mathcal{V}}}

\newcommand{\mG}{\mathcal{G}}
\newcommand{\mI}{\mathcal{I}}\newcommand{\mJ}{\mathcal{J}}
\newcommand{\IS}{{It\^o-Schr\"odinger}}
\newcommand{\fC}{{\mathfrak C}}

\newcommand{\E}{\mathbb E}
\newcommand{\Rm}{\mathbb R}\newcommand{\Nm}{\mathbb N}
\newcommand{\Xm}{\mathbb X}
\newcommand{\Zm}{\mathbb Z}
\newcommand{\Tm}{\mathbb T}
\newcommand{\dint}{\displaystyle\int}
\newcommand{\mE}{\mathcal E}\newcommand{\mF}{\mathcal F}

\newcommand{\aver}[1]{\langle {#1} \rangle}
\newcommand{\dsum}{\displaystyle\sum}

\newcommand{\us}{u}
\newcommand{\up}{{u_\sharp}}
\newcommand{\uc}{{u_c}}
\newcommand{\ut}{u^\theta}
\newcommand{\hut}{\hat u^\theta}
\newcommand{\utc}{{u^\theta_c}}
\newcommand{\ud}{u^\Delta}
\newcommand{\udc}{u^\Delta_c}

\newcommand{\udp}{{u_\sharp^\Delta}}

\newcommand{\utd}{u^{\theta \Delta}}
\newcommand{\hatutd}{\hat{u}^{\theta\Delta}}
\newcommand{\hatutdcon}{\hat{u}^{\theta\Delta *}}
\newcommand{\utdc}{u^{\theta\Delta}_c}

\newcommand{\udd}{u^\Delta_{\delta}}

\newcommand{\utp}{u^{\theta}_{\sharp}}
\newcommand{\utdp}{u^{\theta\Delta}_{\sharp}}
\newcommand{\utdd}{u^{\theta\Delta}_{\delta}}

\newcommand{\mudpq}{\mu^{ \Delta}_{p,q}}
\newcommand{\hmudpq}{\hat{\mu}^{ \Delta}_{p,q}}

\newcommand{\Psidpq}{\Psi^{\Delta}_{p,q}}

\newcommand{\psit}{\psi^{\theta}}
\newcommand{\psitd}{\psi^{\theta\Delta}}

\newcommand{\chid}{ \chi^{\Delta}}

\newcommand{\mL}{\mathcal{L}}
\newcommand{\mP}{\mathcal{P}}

\newcommand{\Ltpq}{\mathcal{L}_{p,q}^{\theta}}
\newcommand{\Ed}{E^{\Delta}}

\newcommand{\TP}{|\!\!\>|\!\!\>|}

\newcommand{\sk}{{\rm k}}
\newcommand{\vs}{\Vec{\mathsf{s}}}
\newcommand{\vk}{\Vec{\mathsf{k}}}
\newcommand{\sss}{\Vec{s}}
\newcommand{\ttt}{\Vec{t}}
\newcommand{\kkk}{\Vec{k}}

\newcommand{\qqq}{\Vec{q}}

\newcommand{\rG}{{\rm G}}
\newcommand{\rV}{{\rm V}}

\newcommand{\sfc}{\tb{\mathrm{C}}}
\newcommand{\tbb}[1]{\textcolor{black}{#1}}

\title{Splitting algorithms for paraxial and It\^o-Schr\"odinger models of wave propagation in random media}
\author{Guillaume Bal \thanks{Departments of Statistics and Mathematics and Committee on Computational and Applied Mathematics, University of Chicago, Chicago, IL 60637; guillaumebal@uchicago.edu} \and Anjali Nair \thanks{Department of Statistics and Committee on Computational and Applied Mathematics, University of Chicago, Chicago, IL 60637; anjalinair@uchicago.edu}}
\date{\today}

\begin{document}

\maketitle

\begin{abstract}
 This paper introduces a full discretization procedure to solve wave beam propagation in random media modeled by a paraxial wave equation or an \IS\ stochastic partial differential equation.  This method bears similarities with the phase screen method used routinely to solve such problems. The main axis of propagation is discretized by a centered splitting scheme with step $\Delta z$ while the transverse variables are treated by a spectral method after appropriate spatial truncation. The originality of our approach is its theoretical validity even when the typical wavelength $\theta$ of the propagating signal satisfies $\theta\ll\Delta z$. More precisely, we obtain a convergence of order $\Delta z$ in mean-square sense while the errors on statistical moments are of order $(\Delta z)^2$ as expected for standard centered splitting schemes. This is a surprising result as splitting schemes typically do not converge when $\Delta z$ is not the smallest scale of the problem. The analysis is based on equations satisfied by statistical moments in the \IS\ case and on integral (Duhamel) expansions for the paraxial model. Several numerical simulations illustrate and confirm the theoretical findings.

\end{abstract}

\noindent{\bf Keywords:} Wave propagation in random media; paraxial regime; It\^o-Schr\"odinger regime; splitting methods

\section{Introduction}\label{sec:intro}
This paper concerns the numerical simulation of wave beams propagating in an oscillatory random environment and described by either a paraxial or an \IS\ equation.  The paraxial equation is given by 
\begin{equation}\label{eqn:PWE}
    \begin{aligned}        \partial_z\ut&=i\kappa_1(z)\Delta_x\ut+ i\kappa_2(z)\frac1{\sqrt{\theta}}\nu\Big(\frac{z}{\theta},x\Big)\ut,\quad z>0, x\in\mathbb{R}^d ;\qquad 
        \ut(0,x)=u_0(x).
    \end{aligned}
\end{equation}

Here, 
$u_0(x)$ is the incident beam profile and $\kappa_{1,2}(z)$ are smooth positive functions of $z$ with bounded inverse. \tb{The functions $\kappa_{1,2}$ are related to the mean value of the refractive index, and we refer the reader to the Supplementary Materials for a detailed derivation.}  We assume that $\nu$ is a \tb{mean-zero real-valued} stationary Gaussian random process with covariance function \tb{$C(z-z',x-x'):=\mathbb{E}[\nu(z,x)\nu(z',x')]$ for $(z,x)$ and $(z',x')$ in $\Rm^{d+1}$, where $\E$ denotes mathematical expectation}.
The parameter $\theta$ represents the ratio of the typical wavelength of the propagating signal with respect to the correlation length in the medium. It satisfies $\theta\ll1$ in laser light propagation in turbulent atmospheres, which is our primary application. For justification and analyses of the paraxial model, see, e.g., \cite{bailly1996parabolic,BKR-KRM-10,bal2011asymptotics,garnier2009coupled,gu2021gaussian} and the Supplementary Materials section.

Solving \eqref{eqn:PWE} numerically is challenging when $\theta\ll1$. The simulation is, however, significantly simplified when $\kappa_2=0$, since $\Delta_x\ut$ is local in the Fourier domain, or when $\kappa_1=0$ since the equation is then local in $x$. It is therefore natural to use a splitting algorithm, which treats the transport and interaction terms in turn over small intervals $\Delta z$, and has been used to partially discretize deterministic and random Schr\"odinger equations in various contexts \tbb{\cite{anton2018exponential,bal2004time,bao2002time,bao2024explicit, brehier2022strong, bruned2022resonance, cui2018analysis, de2001theoretical,gomez2014asymptotics,liu2013mass,liu2013order,marty2006splitting}}. 
Standard convergence results are obtained when the interval $\Delta z$ is sufficiently smaller than the smallest scale in the problem, which here is $\theta\ll1$. 

Choosing $\Delta z$ larger than the smallest scale in the system typically leads to inaccurate simulations \cite{bal2004time,bao2002time}. Yet, splitting techniques called phase screen methods are routinely used in the numerical simulations of paraxial wave propagation in random media \cite{ferlic2023synchronous, gbur2014partially, martin1988intensity, nair2023scintillation, rabinovich2023optical, schmidt2010numerical,spivack1989split}. The reason for this fact, whose justification is one of the main objectives of this paper, is that $\theta^{-\frac12}\nu(\frac z\theta,x){dz}$ is well-approximated by its white noise limit $dB(z,x)$. 

The It\^o-Schr\"{o}dinger equation is the white noise approximation of \eqref{eqn:PWE} given by
\begin{equation}\label{eqn:Ito}
    du=i\kappa_1(z)\Delta_xudz-\frac{\kappa_2^2(z)R(0)}{2}udz+i\kappa_2(z)udB,\quad u(0,x)=u_0(x)\,.
\end{equation}
Here, $B(z,x)$ is a mean-zero Gaussian process characterized by the covariance function $\mathbb{E}[B(z,x)B(z',x')]=\min(z,z')R(x-x')$ with $R(x) = \int_{\Rm} C(s,x)ds$.
See, e.g., \cite{fannjiang2004scaling} for details of the derivation of \eqref{eqn:Ito} from \eqref{eqn:PWE}, which shows that $\ut$ solution of \eqref{eqn:PWE} converges to $\us$ solution of \eqref{eqn:Ito} in distribution.

The splitting approximations $\utd$ and $\ud$ to $\ut$ and $\us$, respectively, are introduced in Section \ref{sec:subsplitting} below with $\Delta\equiv \Delta z$ the splitting step. We also aim to analyze a full discretization of the transverse variables $x\in\Rm^d$, with $d=2$ in practical applications. This is done in three steps detailed in Section \ref{sec:spatialdisc}. We first discretize the random medium $\nu(z,x)$ by a finite dimensional approximation we denote by $\nu_c(z,x)$. A similar procedure approximates $B(s,x)$ by $B_c(s,x)$. The splitting approximations in these modified random environments are then denoted by $\utdc$ and $\udc$. A second step shows that the solutions of \eqref{eqn:PWE} and \eqref{eqn:Ito} decay rapidly in the $x$ variable when the incident beam profile $u_0(x)$ also decays rapidly. \tb{As we use trigonometric polynomials to represent spatial discretization in $x$, we next periodize the solutions.} We denote by $\utdp$ and $\udp$ the periodizations of $\utdc$ and $\udc$, respectively, on the torus $\Tm_L^d:[-\frac L2,\frac L2]^d$. In a final step, the torus is discretized by a uniform grid of spacing $\delta\equiv \Delta x$. The solution is represented by a trigonometric polynomial on which applications of functions of the Laplacian may be carried out explicitly. Fully discretized solutions of \eqref{eqn:PWE} and \eqref{eqn:Ito} are then denoted by $\utdd$ and $\udd$, respectively.

This paper provides error estimates for the aforementioned approximations summarized in Section \ref{sec:submain}. In particular, we show that the splitting algorithm displays a second-order accuracy in $\Delta z$ (when $\theta\leq\Delta z$) for statistical moments whereas it is only first order in a \tb{(strong)} mean\tb{-}square sense.  The convergence in $\delta=\Delta x$ is always super-algebraic under smoothness conditions on the incident beam. We also show in the Supplementary Material that all discretizations converge in distribution to the \IS\ solution $\us$ borrowing tools from \cite{bal2024complex,bal2024long}. Several numerical simulations of wave propagation for both paraxial and \IS\ models confirm the theoretically predicted rates of convergence.

The rest of the paper is structured as follows. The rest of this introduction section collects assumptions and notation used throughout the paper. Section \ref{sec:main} describes the splitting algorithm, the periodization step, and the fully discrete splitting algorithm, and then states our main results of convergence and error estimates. The \tb{proofs} of the results of convergence for the It\^o-Schr\"odinger equation are given in Section \ref{sec:ITO} while the corresponding proofs for the paraxial equation are given in Section \ref{sec:paraxial}. 
Finally, several numerical simulations are presented in Section \ref{sec:num} to illustrate the theoretical findings. Additional information on convergence in distribution and further numerical simulations are provided in the Supplementary Material.

\paragraph{Assumptions on incident beam and random medium} We assume that the incident source $u_0$ satisfies
\begin{equation}\label{eqn:u0_bound}
    \sup_{|\beta|\leq M} \int_{\Rm^d} 
    \aver{\xi}^{2N} |\partial^\beta \hat u_0(\xi)|^2 d\xi  = {\rm C}_{M,N}[u_0]<\infty.
\end{equation}
\tb{Here, $\beta$ is a standard multi-index and }the values of $(M,N)$ are different for different estimates. \tb{We also use the notation} $\aver{x}=\sqrt{1+|x|^2}$ for a vector $x$.

The  random environment is modeled by $\nu(z,x)$, a mean-zero real-valued stationary Gaussian field characterized by the covariance function
\[
 \E \nu(t,x)\nu(s,y) = C(t-s,x,y) = \dint_{\Rm^{2d}} \hat C(t-s,\xi,\zeta)e^{i(\xi\cdot x-\zeta\cdot y)} \frac{d\xi d\zeta}{(2\pi)^{2d}},
 \quad
 \E \hat\nu(t,\xi) \hat\nu^\ast(s,\zeta) =\hat C(t-s,\xi,\zeta)
\]
with $\hat C(s,\xi,\zeta)d\xi d\zeta$ a measure \tb{of bounded total variation} and $\hat{\nu}(z,\xi)$ the Fourier transform of $\nu(z,x)$ w.r.t $x$. \tb{We use the convenient notation $^\ast$ for complex conjugation.} 
In the \IS\ model, the random medium is characterized by the power spectrum $\hat R(\xi,k) := \int_{\Rm} \hat C(s,\xi,k) ds$. \tb{Since $\nu(z,x)$ is stationary, we have $\hat C(s,\xi,\zeta)= (2\pi)^d\hat C(s,\xi)\delta(\xi-\zeta)$  and $\hat R(\xi,\zeta)= (2\pi)^d\hat R(\xi)\delta(\xi-\zeta)$ (using the same symbols in both contexts to simplify notation).}
We assume total variation properties of the form
\begin{equation}\label{eq:intcorr}
    \tb{ \int_{\Rm^{d}} \hat R(\xi) \aver{\xi}^{M}d\xi= \fC_{M}<\infty,\qquad  \int_{\Rm^{d+1}} |\partial^\alpha_{t}\partial^\beta_\xi\hat C(t,\xi)| \aver{t}^{N}\aver{\xi}^{M} dtd\xi  = \fC_{M,N}[\nu] <\infty\,,}
\end{equation}
\tb{where $\alpha=\{0,1\}$ and $|\beta|\in\{0,1,2\}$. We note that from the definition of $R$, the first equality is a consequence of the second. } 

\paragraph{Discretized random medium} We now construct a random medium strongly correlated to $\nu(z,x)$ and characterized by a finite number of independent Gaussian random variables. 
We start from $\hat\nu(z,k)$ for the above stationary process with covariance function given by $\hat C(s,k)$.  We introduce a high-frequency cut-off $K_k\gg1$ and a discretization step $\Delta k\ll1$. We define a smooth cut-off function $\chi_c\in C^\infty_c(\Rm^{d})$ with $\chi_c(k)=1$ on $[-\frac12 K_k,\frac12 K_k]^d$ and $\chi_c(k)=0$ on $\mathbb{R}^d\setminus[- K_k, K_k]^d$. We also define the lattice $\Zm_\Delta^d:=(\Delta k \,\Zm)^d$.  For $q\in\Zm_\Delta^d$, let $\square_q$ be the (Cartesian) cube centered at $q$ of volume $(\Delta k)^d$. We define
\begin{equation}\label{eq:discretevar}
    \hat \nu_q(z) =  
\int_{\square_q} \hat \nu(z,k) dk,\qquad  \hat B_q(z) = \int_{\square_q} \hat B(z,k) dk.
\end{equation}
These are independent mean-zero Gaussian processes with
$\E \hat \nu_q(z_1)\hat \nu^\ast_p(z_2) = \mathds{1}_{p=q}\int_{\square_q} \hat C(z_1-z_2,k)dk$ and $\E \hat B_q(z_1)\hat B^\ast_p(z_2) = \mathds{1}_{p=q} \min{(z_1,z_2)}\int_{\square_q} \hat R(k)dk$.
We then construct
\begin{equation}\label{eq:discreterandom}
\hat \nu_c(z,\xi) = \chi_c(\xi) \sum_{q\in\Zm_\Delta^d} \hat \nu_q(z) \delta(\xi-q), \qquad 
\hat B_c(z,\xi) = \chi_c(\xi) \sum_{q\in\Zm_\Delta^d} \hat B_q(z) \delta(\xi-q).
\end{equation}
Note that $\nu_c(z,x)$ and $B_c(z,x)$ obtained by inverse Fourier transform are real-valued since, for instance, $\hat \nu_c(z,-\xi) =\hat \nu_c^*(z,\xi)$.
For (a.e.) $\xi\in\Rm^d$, denote $q_\xi\in\Zm_\Delta^d$ the unique element such that $\xi\in \square_q$. Then we find
\begin{equation}\label{eq:mediumcor}
\begin{aligned}
\E \hat \nu(z_1,\xi) \hat \nu^\ast_c(z_2,\zeta) &= (2\pi)^d\chi_c(\zeta) \delta(\zeta- q_\xi)\hat C(z_2-z_1,\xi)   ,
\\
\E \hat \nu_c(z_1,\xi) \hat \nu^\ast_c(z_2,\zeta) &=(2\pi)^d\chi_c^2(\xi) \delta(\xi-q_\xi)\delta(\zeta-q_\xi)\int_{\square_{q_\xi}} \hat C(z_2-z_1,k)dk,
\end{aligned}
\end{equation}
and corresponding expressions for the white noise limits.
We thus obtain highly correlated continuous and discrete random media on the domain where $\chi_c=1$.  Assuming that $M=M(r)$ is sufficiently large in \eqref{eq:intcorr}, we verify that for any $r>0$, then $ \int_{\Rm^{d+1}} (1-\chi_c(k)) |\hat C(s,k)| dk ds \leq \sfc K_k^{-r}$. This shows how $K_k$ may be chosen to capture most of the correlation in the medium. The number of Gaussian variables needed to describe the discrete random medium in \eqref{eq:discreterandom} is therefore of order $(\frac{K_k}{\Delta k})^d$.

\begin{remark}\label{rem:nu_c_disc}
Alternatively, we could define $\hat{\nu}_q(z)=\hat{\nu}_{-q}(z)$ \tb{mean-zero} Gaussian processes with correlation $\E \hat \nu_q(z_1)\hat \nu^\ast_p(z_2)=\mathds{1}_{p=q} (2\pi\Delta k)^d \hat C(z_1-z_2,p)$ with $\hat \nu_c$ defined from these Gaussian random variables as above. This medium has almost the same statistics as the previous discrete one, but is uncorrelated to it. We use this medium for numerical simulations as the random variables are more easily described.
\end{remark}

\paragraph{General notation} We summarize here the main notation used throughout the paper. We recall that $z\geq0$ is the coordinate along the main axis of propagation while $x\in\Rm^d$ denotes transverse spatial variables. We denote by $X=(x_1,\ldots\tb{,} x_p)$ a collection of $p\geq0$ points in $\Rm^d$ and $Y=(y_1,\ldots \tb{,}y_q)$ a collection of $q\geq0$ points in $\Rm^d$. We denote by $Z$ a target distance of propagation. We aim to solve the wave beam problem for $(z,x)\in [0,Z]\times\Rm^d$. The interval $[0,Z]$ is discretized into $N_z+1$ points $z_k=k\Delta z$ for $0\leq k\leq N_z$ and $N_z \Delta z=Z.$

We denote by $u$ the solution to the It\^o-Schr\"{o}dinger equation \eqref{eqn:Ito} and $\ud$ its time splitting solution with step size $\Delta z$. When the random potential $\nu(z,x)$ is periodized and discretized by $\nu_c(z,x)$, we denote the corresponding solutions by $\uc$ and $\udc$ respectively. The period of the random medium is $L=2\pi/(\Delta k)$ corresponding to $O(K_k/(\Delta k))$ Fourier modes along each dimension. 
We denote by $\up$ the periodization of $\uc$ on $\Tm^d_L\equiv [-\frac L2,\frac L2]^d$. For the time splitting case we denote by $\udp$ the periodic approximation of $\udc$. We set the solution periodization box length $L=2\pi/\Delta k$ although this can be any integer multiple of $2\pi/\Delta k$. 
Finally we denote by $u_\delta$ the spatial discretization of $\up$. In the time splitting case, the spatially discrete version of $\udp$ is denoted by $\udd$. The spatial discretization of the solution is $\Delta x=L/N_x$ for $N_x$ grid points along each dimension of $x$.

For the paraxial model, we denote by $\ut$ the solution to the paraxial equation \eqref{eqn:PWE}, $\utd$ its time splitting solution and $\utdc$ the time splitting solution when the potential is replaced by $\nu_c$. We denote by $\utdp$ the periodic extension of $\utdc$ and by $\utdd$ the spatial discretization of $\utdp$. 

We denote the potential in the rescaled coordinates by $\nu^\theta(z,x)=\theta^{-\frac12}\nu(\theta^{-1}z,x)$ and the corresponding correlations by $C^\theta(z,x,y)=\theta^{-1}C(\theta^{-1}z,x,y)$.

For $u(z,x)$ a continuous random field, and $(X,Y)$ a collection of points $p+q$ points, we define the \tb{$(p+q)$}th statistical moment of $u$ in the physical and complex symmetrized Fourier variables as
\begin{eqnarray}
    \label{eq:mupq}\mu_{p,q}[u](z,X,Y) &=& \mathbb{E}\Big[\prod_{j=1}^pu(z,x_j)\prod_{l=1}^q u^\ast(z,y_l)\Big],\\
    \label{eq:hatmupq}
    \hat{\mu}_{p,q}[u](z,v)&=&\int_{\mathbb{R}^{(p+q)d}}\mu_{p,q}[u](X,Y)e^{-i(\sum_{j=1}^px_j\cdot\xi_j-\sum_{l=1}^qy_l\cdot\zeta_l)}dx_1\cdots dx_pdy_1\cdots dy_q,
\end{eqnarray}
where $v=(\xi_1,\cdots\tb{,}\xi_p,\zeta_1,\cdots,\zeta_q)$ denotes the vector of (symmetrized) variables dual to $(X,Y)$.  The standard Fourier transform is defined by $\mF f(\xi) = \int_{\Rm^d} e^{-ix\cdot\xi} f(x) dx$ with inverse $\mF^{-1}\hat f(x) = \int_{\Rm^d} e^{ix\cdot\xi} \hat f(\xi) \frac{d\xi}{(2\pi)^d}$.

We use $\|\cdot\|$ to denote the total variation (TV) norm of a Borel measure on $\Rm^n$. When the measure $\rho d\xi$ has a density $\rho(\xi)$, we also denote by $\|\rho\|=\|\rho d\xi\|$ the $L^1(\Rm^n)$ norm. The $L^2(X)$ norm is denoted $\|\cdot\|_{L^2(X)}$ or $\|\cdot\|_2$. The uniform norm is denoted as $\|\cdot\|_\infty$ or $\|\cdot\|_{L^\infty(X)}$ while the k-Lipschitz norm is denoted as $\|\cdot\|_{\mathrm{k},\infty}$. We also define a $\TP\cdot\TP$ \tb{root-mean-square (rms)}-norm in \eqref{eq:rmsnorm} below.

\section{Splitting algorithms and convergence results} \label{sec:main}
\subsection{Splitting scheme for paraxial and It\^o-Schr\"odinger models}
\label{sec:subsplitting}
We start with a discretization of the wave solutions in the axial variable $z$. We consider a final distance $Z=N_z\Delta z$ and a grid $z_n=n\Delta z$ for $0\leq n\leq N_z$. Integrals on each interval $[n\Delta z,(n+1)\Delta z]$ are approximated by a one-point collocation method. 
Let \tb{$\gamma\in[0,1)$} parametrize the collocation points. Splitting schemes are then defined by a choice of
\begin{equation}\label{eqn:tau}
    \tau_\gamma(z):=\Delta z\sum_{n\ge 0}\delta(z-(n+\gamma)\Delta z).
\end{equation}
The end-point \tb{choice $\gamma=0$ corresponds} to the Lie splitting while the midpoint choice $\gamma=\frac12$ corresponds to the centered Strang splitting. We next define the integrals for $0\leq z_1\leq z_2\leq Z$:
\begin{equation}\label{eq:intchi}
\begin{aligned}
    \chi_{z_1}(z_2)&=\int_{z_1}^{z_2}\kappa_1(s)ds,\qquad   \chid_{z_1}(z_2)&=\int_{z_1}^{z_2}\tau_\gamma(s)\kappa_1(s)ds  = \sum_{j=\lfloor z_1/\Delta z\rfloor+1}^{\lfloor z_2/\Delta z\rfloor}\kappa_1((j+\gamma\tb{-1})\Delta z)\Delta z.
\end{aligned}
\end{equation}
Here, $\lfloor \cdot \rfloor$ is defined with $\lfloor z/\Delta z\rfloor=n$ when $z_{n}\le z< z_{n+1}$.  It is well-known \cite{geometric2006} that the centered splitting may offer second-order accuracy compared to the first-order accuracy of Lie splitting. In the splitting of stochastic equations, the concentration phenomenon $\E dB(s) dB(t)=\delta(t-s)$ renders the advantage of centered trapezoidal integration ineffective and only first-order convergence is expected for any value of $\gamma$ (see \cite{liu2013order} and subsequent calculations). However for moment calculations, we do obtain second-order accuracy for the centered splitting scheme $\gamma=\frac12$.

The splitting scheme for the paraxial model is defined as follows. Recalling the notation $\nu^\theta(z,x)=\theta^{-\frac12}\nu(\theta^{-1}z,x)$, an approximation $\utd$ to the solution of \eqref{eqn:PWE} is defined as the solution to:
\begin{equation}\label{eqn:PWE_split}
    \begin{aligned}    & \partial_z\utd
    =i\tau_\gamma(z)\kappa_1(z)\Delta_x\utd+
    i\kappa_2(z) \nu^\theta(z,x) \utd,
    \qquad 
       \utd(0,x)=u_0(x).
    \end{aligned}
\end{equation}
\tb{The Dirac comb $\tau_\gamma$ ensures that the Laplacian is `active' at discrete points in time.  This is a convenient notation ensuring that the evolution equation sees the effect of the Laplacian and the potential successively. Alternatively, }the splitting scheme may be split into a succession of simple steps. Define the operators:
\begin{eqnarray}
    \rV^\theta_{z_1}(z_2):\  && \psi(x)\mapsto 
    (\rV^\theta\psi)(x) = \exp{\Big(\int_{z_1}^{z_2}i\kappa_2(s)\nu^\theta(s,x)ds}\Big) \psi(x) \label{eq:Vtheta}
    \\
    \mG(z):\  && \psi(x) \mapsto \mathcal{G}\psi(x)=\int_{\mathbb{R}^d}G(x-x',z)\psi(x')dx',\qquad  G(x,z):=\frac{1}{(4\pi i z)^{\frac d2}}e^{\frac{i|x|^2}{4z}}.\label{eq:mG}
\end{eqnarray}
The solution to \eqref{eqn:PWE_split} is then given explicitly for $z\in(z_{n},z_{n+1}]$ by the more standard form:
\begin{equation}\label{eq:splittingexplicit}
\utd(z,x) = \left\{ \begin{array}{cl}
 \rV^\theta_{z_n}(z) \utd(z_n,x), & z_n<z\le z_n+\gamma\Delta z,
 \\[3mm]
\rV^\theta_{z_n+\gamma \Delta z}(z)\circ \mG(\chi_{z_n}(z_{n+1})) \circ \rV^\theta_{z_n}(z_n+\gamma \Delta z) \utd(z_n,x), &z_n+\gamma\Delta z<z\le z_{n+1}.
\end{array}\right.
\end{equation}
The splitting algorithm applied to the interval $[0,Z]$ is therefore a composition of operators that are straightforward to apply, with $\rV^\theta$ a multiplicative operator in the physical domain and $\mG(t)=\mF^{-1} e^{-it|\xi|^2} \mF$ a multiplicative operator in the Fourier domain.

For the It\^o-Schr\"odinger model, the splitting approximation $\ud$ to \eqref{eqn:Ito} is given by
\begin{equation}\label{eqn:Ito_split}
d\ud=i\tau_\gamma(z)\kappa_1(z)\Delta_x\ud dz-\frac{\kappa_2^2(z)R(0)}{2}\ud dz+i\kappa_2(z)\ud dB,\quad \ud(0,x)=u_0(x)\,.
\end{equation}
As can be verified by a standard application of the It\^o formula, the solution of this stochastic partial differential equation is also given by the explicit procedure \eqref{eq:splittingexplicit}, where $\rV^\theta$ is replaced by the (local) multiplicative operator \tb{(the phase screen)} $ \rV_{z_1}(z_2):\psi(x)\mapsto 
    (\rV\psi)(x) = e^{\int_{z_1}^{z_2}i\kappa_2(s)dB(s,x)} \psi(x)$.

\subsection{Spatial periodization and discretization}
\label{sec:spatialdisc}

A full discretization in the transverse variable $x$ of the above approximations $\utd$ and $\ud$ \tb{is done using a spectral method and} requires the following three steps. 

The first step approximates the stationary random medium $\nu(z,x)$ by $\nu_c(z,x)$ described in \eqref{eq:discreterandom}, which involves an order of $(K_k/\Delta k)^d$ Gaussian random variables and is periodic on the torus $\Tm^d_L = [-\frac L2,\frac L2]^d$ for $L\Delta k=2\pi$. The corresponding solutions for the paraxial and \IS\ models on $(z,x)\in\Rm_+\times\Rm^d$, before and after splitting in $z$, are denoted by $\utc$, $\uc$, and $\utdc$ as well as $\udc$, respectively. 

The second step defines periodized solutions on $\Tm^d_L$. Periodization is defined by:
\begin{equation}\label{eq:periodization}
     \sharp:\  u(x) \,\mapsto\, u_\sharp(x) \,:= \dsum_{h\in (L\Zm)^d} u(x+h)
\end{equation}
for $u(x)$ an integrable function on $\Rm^d$. 
We will show that for incident conditions $u_0(x)$ with sufficiently fast decay, then $u(z,x)$ also has sufficiently fast decay in $x$ so that it is well approximated by its periodization on $\Tm_L^d$. Starting from $\utdc$ and $\udc$, the corresponding periodizations are called $\utdp$ and $\udp$, respectively.

The final step is a full spatial discretization based on approximating periodic functions on $\Tm_L^d$ by trigonometric polynomials of degree $N_x$ in each of the $d$ transverse spatial dimensions.  We denote by $\Pi=\Pi_{N_x}$ the $L^2(\Tm_L^d)-$orthogonal projection onto such modes.  For $f\in L^2(\Tm_L^d)$, we have:\begin{equation}\label{eq:discFourier}
    f(x)=\sum_{l\in\Zm^d}  e^{i\Delta k l\cdot x} \hat f_l,\qquad 
\Pi f(x) = \sum_{|l_j|\leq N_x}  e^{i\Delta k l\cdot x} \hat f_l,
\qquad \hat f_l=L^{-d}\int_{\Tm_L^d} e^{-i \Delta k l\cdot x} f(x) dx.
\end{equation}
The polynomials $\Pi f$ are equivalently characterized by their evaluation on a discrete uniform grid with mesh size $\Delta x= L/N_x= 2\pi/(N_x\Delta k)$ with a discrete (fast) Fourier transform. 

We define $\sfv^\theta_\delta=\Pi \sfv^\theta_\delta$ and $\sfv_\delta=\Pi \sfv_\delta$ as the spatially discretized solutions to
\begin{equation}\label{eq:discreteeq}
\partial_z \sfv^\theta_\delta = i \phi(z) \Delta_x \sfv^\theta_\delta + i \kappa_2(z) \Pi(\nu^\theta_c \sfv^\theta_\delta),\qquad
d \sfv_\delta = i \phi(z) \Delta_x \sfv_\delta dz + \Pi \Big(-\kappa_2^2 \frac12 R_c(0) \sfv_\delta dz + i\kappa_2 \sfv_\delta dB_c \Big).
\end{equation}
Here, $\phi(z)$ is either $\kappa_1(z)$ or $(\tau_\gamma\kappa_1)(z)$. We impose the incident conditions $\sfv^\theta_\delta(0,x)=\sfv_\delta(0,x)=\Pi u_0 (x)$.
This defines $\sfv^\theta_\delta=:u^\theta_\delta$ as solution of a finite system of stochastic differential equations and $\sfv^\theta_\delta=:\utdd$ as a fully discretized system since $(\partial_z-i\Pi \nu^\theta_c)$ is solved explicitly on the discrete spatial grid points while $(\partial_z-i\kappa_1(z)\Delta_x)$ is solved explicitly for each of the finitely many Fourier coefficients. This similarly defines $\sfv_\delta=:u_\delta$ for $\phi=\kappa_1$ and $\sfv_\delta:= \udd$ for $\phi=\tau_\gamma\kappa_1$.

\subsection{Main convergence results} 
\label{sec:submain}
We now compare the exact solutions $\us$ and $\ut$ to their semi-discrete approximations $\ud$ and $\utd$ and fully discrete approximations $\udd$ and $\utdd$. Our convergence rates are essentially uniform in $\theta\leq1$. In particular, we show that splitting algorithms provide accurate solutions independent of $0<\theta$ even when the splitting step $\Delta z\gg\theta$. 


We consider two types of convergence. The first type is a \tb{\em strong} \tb{(root-mean-square)} estimate of the form $(\E\|\us-\ud\|_{L^2(\Rm^d)}^2)^{\frac12}$ at grid points $z=z_n$, with a rate of convergence at most first-order in $\Delta z$ even for the centered splitting scheme $\gamma=\frac12$. The second type is a \tb{\em (stochastically) weak} estimate for spatial moments (uniform in the spatial variables) of the form $\| \mu_{p,q}[\us]-\mu_{p,q}[\ud]\|_\infty$. We will see that such estimates are first-order for any splitting algorithm and second-order in $\Delta z$ when $\gamma=\frac12$. The various constants that appear in the estimates are of the form $\sfc {\rm C}_{M,N}[u_0] e^{\sfc\aver{z}^2 (p+q)^2\fC_{P,Q}[\nu]}$ and $\sfc \mathfrak{C}_{M,1}[\hat C(z,\cdot)] e^{\sfc\aver{z}^2 (p+q)^2\fC_{P,Q}[\nu]}$ with universal constants $\sfc$ and constants of regularity $(M,N,P,Q)$ in \eqref{eqn:u0_bound} and \eqref{eq:intcorr} that depend on the estimate of interest. We do not keep track of these constants explicitly.

In the Supplementary Material, we obtain a third type of convergence showing that for a fixed value of $z$, all processes $\ud$, $\utd$, $\udd$, and $\utdd$ converge in law as continuous processes to the law generated by the \IS\ model $x\mapsto \us(z,x)$. This proximity of $\ut$ to $\us$ in law heuristically justifies why we may obtain convergence even when $\theta\ll \Delta z$.

For $u(s,x)$ sufficiently smooth, we define the root-mean-square norm
\begin{equation}\label{eq:rmsnorm}
    \TP u \TP_{\Xm} := \sup_{0\le s\le Z} (\mathbb{E}\|u(s,\cdot)\|^2_{L^2(\Xm)})^{\frac12}
\end{equation}
where $\Xm$ is either $\Rm^d$ or $\Tm^d_L$. Then we have the following strong estimates.
\begin{theorem}[\tb{Strong} estimates]\label{thm:pathwise_error}
\begin{enumerate}
    \item Let $\Xm=\Rm^d$ and $\sfv\in\{\us,\ut\}$. Then $ \TP \sfv-\sfv^\Delta \TP_{\Xm} \leq \sfc \Delta z$.
    \item Let $\Xm=\Rm^d$ and $\sfv\in\{\us,\ud,\ut,\utd\}$. Then for  $N\geq1$, $\TP \sfv-\sfv_c \TP_{\Xm} \leq \sfc_N [(\Delta k)^2 + K_k^{-N}]$.
    \item Let $\Xm=\Tm^d_L$ and $\sfv\in\{\us,\ud,\ut,\utd\}$. Then for  $N\geq1$, $\TP \sfv_c-\sfv_\sharp \TP_{\Xm} \leq \sfc_N L^{-N}$.
    \item Let $\Xm=\Tm^d_L$ and $\sfv\in\{\us,\ud,\ut,\utd\}$. Then for  $N\geq1$, $\TP \sfv_\sharp-\sfv_\delta \TP_{\Xm} \leq \sfc_N (\Delta x)^N$.
\end{enumerate}
The above estimates are uniform in $\theta\in(0,1]$. In the last three estimates, the regularity assumptions on $u_0$ and $\hat C$ depend on $N$.
\end{theorem}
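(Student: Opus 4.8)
The plan is to prove the four estimates of Theorem~\ref{thm:pathwise_error} separately, exploiting in each case that the root-mean-square norm $\TP\cdot\TP_\Xm$ is an $L^2$-type norm so that Duhamel/variation-of-constants representations and the isometry property of the free propagators $\mG(t)=\mF^{-1}e^{-it|\xi|^2}\mF$ and $e^{i\phi(z)\Delta_x}$ can be combined with the Gaussian structure of $\nu^\theta$ and $B$ (resp.\ $\nu_c,B_c$). The common mechanism is that each discretization replaces an exact operator by a close approximant and the accumulated error over the $N_z=Z/\Delta z$ steps is controlled by a discrete Gr\"onwall argument; the propagators being $L^2$-unitary means no growth is picked up from the transport part.

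\textbf{Part 1 (splitting error, $\sfv\in\{\us,\ut\}$).} For the \IS\ case I would write the Duhamel expansion of $\us$ and of $\ud$ against the common semigroup generated by $i\kappa_1(z)\Delta_x$, so that the difference solves an SPDE driven by the discrepancy between $\chi_{z_1}(z_2)$ and $\chid_{z_1}(z_2)$ in the transport phases and by the exact vs.\ collocated handling of the $dB$ term. Squaring, taking expectations, and using the It\^o isometry together with $\|e^{i\kappa_1\Delta_x}\phi\|_2=\|\phi\|_2$ reduces everything to bounding, at each step, a local consistency error of size $O((\Delta z)^2)$ in $L^2$, which follows from the smoothness of $\kappa_{1,2}$, the bound \eqref{eqn:u0_bound} on $u_0$ (to control $\|\Delta_x u\|_2$-type quantities), and the integrability \eqref{eq:intcorr} of $\hat R$ (to control the damping term $\kappa_2^2R(0)/2$ and the quadratic variation). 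Summing $N_z$ local errors of size $O((\Delta z)^2)$ gives $O(\Delta z)$; the reason one does \emph{not} gain the extra order even for $\gamma=\tfrac12$ is the concentration $\E\,dB(s)dB(t)=\delta(t-s)R(0)\,dt$, which makes the trapezoidal cancellation ineffective, exactly as flagged after \eqref{eqn:tau}. For the paraxial case $\sfv=\ut$, I would instead use the integral (Duhamel) expansion in powers of $\nu^\theta$, noting that the multiplicative operator $\rV^\theta$ is an $L^2$-isometry pathwise (since $\nu^\theta$ is real), and show the same $O(\Delta z)$ bound uniformly in $\theta\le1$; uniformity is the delicate point and comes from integrating the rapidly oscillating factor $\nu^\theta(z,x)=\theta^{-1/2}\nu(z/\theta,x)$ against the smooth splitting weights, where the $\theta^{-1/2}$ is compensated after taking second moments and using $\int|\hat C(t,\xi)|\aver t^N dt<\infty$.

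\textbf{Parts 2--4 (medium truncation, periodization, spatial discretization).} For Part~2, $\sfv-\sfv_c$ solves an equation whose source is the difference between the action of $\nu$ (or $B$) and of $\nu_c$ (or $B_c$); by \eqref{eq:mediumcor} and the tail bound $\int(1-\chi_c(k))|\hat C(s,k)|\,dk\,ds\le CK_k^{-r}$ stated in the text, together with the $O((\Delta k)^2)$ error from replacing $\hat\nu(z,k)$ on $\square_q$ by the cell-averaged $\hat\nu_q(z)$ (a midpoint-quadrature estimate requiring $M$ large in \eqref{eq:intcorr} so that $\partial_\xi\hat C$ is integrable), the second-moment estimate gives $C_N[(\Delta k)^2+K_k^{-N}]$ after a Gr\"onwall step. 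Part~3 is the statement that, since $u_0$ decays fast, the exact/semidiscrete solution decays fast in $x$ — this I would get from the $\aver\xi^{2N}$-weighted bound \eqref{eqn:u0_bound} propagated through the (unitary, hence regularity-preserving up to the random potential's smoothness) flow — so its periodization tail $\sum_{h\neq0}u(\cdot+h)$ restricted to $\Tm_L^d$ is $O(L^{-N})$ in $L^2$. Part~4 is a standard spectral/aliasing estimate: $\|\sfv_\sharp-\Pi\sfv_\sharp\|_{L^2(\Tm_L^d)}\le C_N(\Delta x)^N$ provided $\sfv_\sharp$ is $C^N$ in $x$ uniformly, which again follows from the weighted regularity of $u_0$ and $\hat C$ pushed through the flow, plus the observation that $\Pi$ commutes with $e^{i\phi(z)\Delta_x}$ and the discretized equation \eqref{eq:discreteeq} is consistent with the periodized one up to the projection error in the potential term.

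\textbf{Main obstacle.} The genuinely hard part is the uniformity in $\theta\in(0,1]$ in Part~1 for the paraxial model: one must show the splitting consistency error stays $O(\Delta z)$ even when $\Delta z\gg\theta$, i.e.\ when the potential oscillates many times within a single splitting step. This cannot be done by naive Taylor expansion in $\Delta z$ (which would produce negative powers of $\theta$); instead I expect to expand $\rV^\theta$ in its Duhamel series, and in each multilinear term in $\nu^\theta$ perform the $z$-integrations first, using the decay of $\hat C(t,\xi)$ in $t$ (the $\aver t^N$ weight in \eqref{eq:intcorr}) to show that correlated pairs of $\nu^\theta$ factors contribute $O(\theta)$ or $O(\Delta z)$ and that the series is summable with constants of the advertised exponential form $e^{C\aver z^2(p+q)^2\fC_{P,Q}[\nu]}$. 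Controlling the combinatorics of this expansion uniformly in $\theta$, and matching it term-by-term with the analogous expansion of the semigroup/Duhamel solution of \eqref{eqn:PWE}, is where the real work lies; the other three parts are, by comparison, routine consequences of unitarity of the propagators, Gaussian second-moment identities, and the quantitative smoothness/decay hypotheses on $u_0$ and $\hat C$.
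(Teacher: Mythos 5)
Your proposal is correct and follows essentially the same route as the paper: an error/Duhamel equation with the unitary free propagators, It\^o isometry and Gr\"onwall (backed by second-moment regularity) for the \IS\ parts, a full Duhamel expansion in powers of $\nu^\theta$ with Gaussian pairing bounds that are uniform in $\theta$ for the paraxial splitting error, midpoint-quadrature plus spectral-tail bounds for the medium discretization, spatial decay of second moments for the periodization, and Sobolev/spectral regularity on the torus for the projection step. The only slight mismatch is that your ``main obstacle'' paragraph invokes the $O(\theta)$/$O(\Delta z)$ cancellation of correlated pairs, which the paper needs only for the second-order moment estimates; for the first-order path-wise bound the $\Delta z$ comes directly from the uniform phase error $\sup_s|\chi_0(s)-\chi^\Delta_0(s)|\leq C\Delta z$, with the pairing bounds merely providing $\theta$-uniform summability of the series.
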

The errors $u-\udd$ and $\ut-\utdd$ on $\Tm_L^d$ are therefore given by the sum of the above four contributions. For (essentially) first-order approximation results for non-linear \IS\ models, see \cite{liu2013mass,liu2013order}.


We now consider estimates for spatial moments. We assume that $0\leq s\leq Z$ and $(X,Y)\in\Xm^p\times\Xm^q$ for $\Xm=\Rm^d$ or $\Xm=\Tm^d_L$. We define the $\|\cdot\|_\infty$ norm as the supremum over $[0,Z]\times\Xm^p\times\Xm^q$.  For the \IS\ model, we define $\beta=\beta(\gamma)$  equal to $1$ when \tb{$\frac12\not=\gamma\in[0,1)$} and $\beta$ equal to $2$ when $\gamma=\frac12$.  For the paraxial model, $\beta=\beta(\gamma,\theta)$ is defined a bit differently. We still define $\beta=1$ when \tb{$1/2\not=\gamma\in[0,1)$}. When $\gamma=\frac12$ and $\theta\leq\Delta z$, define $\beta=2$. However, when $\gamma=\frac12$ and $\Delta z\leq\theta$, we define $\beta$ such that \tb{$(\Delta z)^\beta= \min (\theta\Delta z+(\Delta z)^2,\frac{(\Delta z)^2}\theta)$}. We observe that $\frac32\leq \beta\leq2$ and that $\beta=\frac32$ when $\Delta z=\theta^{2}$. 

Then we have the following result for the \IS\ and paraxial models.

\begin{theorem}[Moment estimates]\label{thm:momentestim} Let $D=[0,Z]\times\Xm^{p}\times\Xm^q$\tb{, for non-negative integers $p,q$}. Then we have the following.
\begin{enumerate}
    \item Let $\sfv\in\{\us,\ut\}$ and $\Xm=\Rm^d$. Then $\|{\mu}_{p,q}[\sfv]-{\mu}_{p,q}[\sfv^\Delta] \|_{L^\infty(D)} \leq \sfc (\Delta z)^\beta$.
    \item Let  $\sfv\in\{\us,\ud,\ut,\utd\}$ and $\Xm=\Rm^d$. Then for $N\geq1$, $\|{\mu}_{p,q}[\sfv]-{\mu}_{p,q}[\sfv_c] \|_{L^\infty(D)} \leq \sfc_N  [(\Delta k)^2+ K_k^{-N}]$.
    \item Let $\Xm=\Tm^d_L$ and $\sfv\in\{\us,\ud,\ut,\utd\}$. Then for  $N\geq1$, $ \| {\mu}_{p,q}[\sfv_c]-{\mu}_{p,q}[\sfv_\sharp] \|_{L^\infty(D)} \leq \sfc_N L^{-N}$.
    \item Let $\Xm=\Tm^d_L$ and $\sfv\in\{\us,\ud,\ut,\utd\}$. Then for  $N\geq1$, $\| {\mu}_{p,q}[\sfv_\sharp]-{\mu}_{p,q}[\sfv_\delta] \|_{L^\infty(D)} \leq \sfc_N (\Delta x)^N$.
\end{enumerate}
\end{theorem}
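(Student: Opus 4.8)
The plan is to treat the four items separately, in increasing order of difficulty, with item 1 being the genuinely new contribution and items 2--4 being essentially the same super-algebraic truncation/periodization/quadrature estimates already used in Theorem \ref{thm:pathwise_error}, but now lifted to the level of the $(p+q)$-point moments.

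For item 1 in the \IS\ case, I would work directly with the closed evolution equations satisfied by the moments $\mu_{p,q}[\us]$ and $\mu_{p,q}[\ud]$. Applying the It\^o formula to the product $\prod_j \us(z,x_j)\prod_l \us^\ast(z,y_l)$, the martingale part drops out in expectation and one is left with a deterministic transport-plus-potential equation for $\mu_{p,q}[\us]$ driven by a bounded operator whose kernel is built from $R(\cdot)$ and $R(0)$; the same computation for $\ud$ gives an equation with the continuous operator $i\kappa_1(z)\sum_j \Delta_{x_j} - i\kappa_1(z)\sum_l\Delta_{y_l}$ replaced by the collocated version $i(\tau_\gamma\kappa_1)(z)(\cdots)$. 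The difference $w_{p,q}:=\mu_{p,q}[\us]-\mu_{p,q}[\ud]$ then solves an equation of Duhamel type, $w_{p,q}(z) = \int_0^z (\text{propagator})\big[(\chi_{0}(s)-\chid_{0}(s))\,\text{Laplacian}\big]\mu_{p,q}[\us](s)\,ds + (\text{lower order})$, and the whole game is to bound $\|\mu_{p,q}[\us]-\mu_{p,q}[\ud]\|_\infty$ by $C(\Delta z)^\beta$. Here the key point is that $\chi_{z_1}(z_2)-\chid_{z_1}(z_2)$ is the error of a one-point quadrature rule with collocation parameter $\gamma$: it is $O(\Delta z)$ for general $\gamma$ (one order is lost against the integrand $\kappa_1$), but for $\gamma=\tfrac12$ the midpoint rule is exact to second order, so the accumulated error over $O(Z/\Delta z)$ steps is $O((\Delta z)^2)$. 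To get the uniform-in-moments constant of the advertised form $Ce^{C\aver z^2(p+q)^2\fC}$, I would run a Gronwall argument in the TV-norm $\|\cdot\|$ on the Fourier side $\hat\mu_{p,q}$ (where the potential operator acts by a convolution whose mass is controlled by $\fC_{M}$ from \eqref{eq:intcorr} and picks up a combinatorial factor $\binom{p+q}{2}$ from the pairings), then pass back to $\|\cdot\|_\infty$ by Fourier inversion using the decay \eqref{eqn:u0_bound} of $\hat u_0$.

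For item 1 in the paraxial case one cannot use a closed moment equation; instead I would expand both $\ut$ and $\utd$ by the iterated Duhamel (Dyson) series in the potential $\kappa_2\nu^\theta$, take the expectation of the product of $p+q$ such series, and use Gaussianity of $\nu$ to reduce each term to a sum over pairings, each pairing contributing a time-ordered integral of $C^\theta(s-s',\cdot)=\theta^{-1}C(\theta^{-1}(s-s'),\cdot)$. Comparing the $\ut$-term to the $\utd$-term, the only difference is again that the free Schr\"odinger propagators $e^{i\chi_{z_1}(z_2)\Delta}$ are replaced by $e^{i\chid_{z_1}(z_2)\Delta}$, so each term carries a factor $e^{i\chi\Delta}-e^{i\chid\Delta}$ which is $O(|\chi-\chid|)$ in operator norm on the relevant weighted spaces. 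The subtlety — and the source of the non-integer exponent $\beta$ when $\Delta z\le\theta$ — is that when the two endpoints $s,s'$ of a correlation pairing fall in the \emph{same} splitting cell, the contribution is already small (of size $\min(\theta,\Delta z)$ times the cell length, since $\int_{|\sigma|\le\Delta z}\theta^{-1}|C(\theta^{-1}\sigma,\cdot)|d\sigma$ is either $O(\Delta z/\theta)$-bounded by the total mass when $\Delta z\le\theta$, or $O(1)$ when $\theta\le\Delta z$), and one must balance this against the quadrature-error factor to optimize over how the correlation length interacts with $\Delta z$; carrying this bookkeeping through and summing the Dyson series (the series converges with the stated exponential constant by the same TV-mass estimates as above) is where the $(\Delta z)^\beta=\min(\theta\Delta z,(\Delta z)^2/\theta)$ when $\Delta z\le\theta$, and $(\Delta z)^2$ when $\theta\le\Delta z$, comes out.

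For items 2--4, the strategy is purely deterministic-looking: moments of a product are controlled by products of norms, so it suffices to revisit each of the three approximation steps (medium discretization $\nu\to\nu_c$, periodization $\sharp$, spatial truncation $\Pi$) at the level of a single factor and then multiply through, picking up a factor $(p+q)$ and using $\TP\cdot\TP$-type a priori bounds on each factor that are already established (or immediate from) Theorem \ref{thm:pathwise_error}. Concretely, for item 2 I would write $\mu_{p,q}[\sfv]-\mu_{p,q}[\sfv_c]$ as a telescoping sum over the $p+q$ factors, bound each summand by $\TP\sfv-\sfv_c\TP_\Xm$ times $L^\infty$-in-time $L^2$-in-space bounds on the other $p+q-1$ factors (which hold since $u_0$ decays and the evolutions are $L^2$-bounded), and invoke item 2 of Theorem \ref{thm:pathwise_error} to get $C_N[(\Delta k)^2+K_k^{-N}]$; the $(\Delta k)^2$ here is the midpoint-type error in replacing $\int_{\square_q}\hat C\,dk$ by its value at $q$ combined with the smoothness assumption $\partial_\xi\hat C\in L^1$-with-weights. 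Items 3 and 4 are identical in form, using the $L^{-N}$ periodization estimate and the $(\Delta x)^N$ spectral-truncation estimate respectively, which rest on the rapid decay in $x$ of the solution (from the weighted-$\hat u_0$ assumption \eqref{eqn:u0_bound} propagated through the evolution) and on the smoothness/rapid Fourier decay of $u_0$ and $\hat C$. The main obstacle in the whole theorem is unquestionably the paraxial item 1 in the regime $\Delta z\le\theta$: organizing the Dyson-series pairing sum so that the same-cell versus different-cell cancellations are captured sharply enough to produce the exact interpolated exponent $\beta$, while keeping the combinatorial constants under control, is the delicate part; the \IS\ item 1 is cleaner because the closed moment equation sidesteps the series entirely, and items 2--4 are routine once the single-factor estimates of Theorem \ref{thm:pathwise_error} are in hand.
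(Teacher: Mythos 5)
Your plan for the \IS\ half of item 1 and for items 3--4 is essentially the paper's: closed moment equations, phase compensation, Gr\"onwall in the TV norm on the Fourier side, and, for periodization/spectral truncation, telescoping over the $p+q$ factors combined with pointwise concentration and an $H^{d/2+1}$ Sobolev bound. One precision matters there: since $\sup_s|\chi_0(s)-\chid_0(s)|$ is only $O(\Delta z)$ even for $\gamma=\tfrac12$ (the $O((\Delta z)^2)$ gain holds only at grid points), "the midpoint rule is exact to second order" must be implemented as a within-cell cancellation of the oscillatory factor $e^{i\chi_0(s)\phi}-e^{i\chid_0(s)\phi}$ against an $s$-differentiable integrand, which is exactly Lemma~\ref{lem:Strang}; the $s$-derivative of $\Psi_{p,q}$ brings in the quadratic phases $\phi=G_m$, hence the weighted bounds $\aver{v}^4$, $\aver{k}^4\hat R$ of Lemma~\ref{lemma:Psi_pq_bound}. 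The genuine gap is the paraxial half of item 1. Your two ingredients --- smallness of the correlation mass when both endpoints of a pairing lie in one splitting cell, and midpoint-quadrature cancellation --- cannot produce the claimed rates: the quadrature cancellation costs an $s$-derivative of $\hat C^\theta$, i.e.\ a factor $\theta^{-1}$, giving $O((\Delta z)^2/\theta)$, while the same-cell gain only yields $\Delta z\min(1,\Delta z/\theta)$; together this gives at best $\min(\Delta z,(\Delta z)^2/\theta)$, which matches the theorem only when $\Delta z\le\theta^2$, degenerates to first order in the headline regime $\theta\ll\Delta z$ where $\beta=2$ is claimed, and never produces the $\theta\Delta z$ bound needed when $\theta^2\le\Delta z\le\theta$. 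The missing mechanism is the paper's second family of bounds of size $O(\theta\Delta z)$ (hence $\Delta z(\theta+\Delta z)$ per term): for pairings $(l,\ell)$ non-adjacent in the time-ordered simplex, the correlation $\hat C^\theta(s_l-s_\ell)$ together with the ordering confines an intermediate interaction time to a window of width $O(\theta)$ (via $\int_z^\infty\int_0^z|\hat C^\theta(s-t)|\,ds\,dt=\theta\int_0^\infty u|\hat C(u)|\,du$), and for adjacent pairings one integrates by parts in the paired time against the $\theta$-uniformly bounded antiderivative $E^\theta$, treating the boundary terms with the finiteness of $\int\aver{s}|\hat C|$. Taking the minimum of these bounds with $O(\theta^{-1}(\Delta z)^2)$ is what yields $\beta$; without them the stated result is out of reach.

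A second, smaller gap is item 2: telescoping and invoking Theorem~\ref{thm:pathwise_error}(2) cannot give the $L^\infty(D)$ moment bound, because that pathwise estimate controls $\E\|\sfv-\sfv_c\|^2_{L^2(\Rm^d)}$, not $\sup_x\E|\sfv-\sfv_c|^2(z,x)$; Cauchy--Schwarz across the factors then only bounds an $L^2$-in-one-variable quantity, not a supremum over $(X,Y)$. (This is in contrast with items 3--4, where the pointwise decay $\E|\sfv-\sfv_\sharp|^2(x)\le CL^{-2N}$ and the $H^{d/2+1}$ embedding make the telescoping legitimate.) The paper instead proves item 2 directly at the level of moments: for the \IS\ model by comparing the closed equations driven by $\hat R$ and $\hat R_c$, with a TV estimate that requires $C^2$-in-$k$ regularity and the midpoint cancellation on the cubes $\square_{q}$; for the paraxial model by inserting the medium-correlation estimate of Lemma~\ref{lem:mediumcorrelation} into the Duhamel expansion of the compensated moments. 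Your plan would need to be upgraded along these lines (or to pointwise-in-$x$ pathwise estimates) to deliver the stated uniform bound.
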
 

\begin{remark}
    As in \cite[Theorem 4.3]{bal2024long}, we may show that  the statistical moments of the paraxial equation are well approximated by those of It\^o-Schr\"{o}dinger with $\|\mu_{p,q}[u^\theta](z,X,Y)-\mu_{p,q}[u](z,X,Y)\|_{L^\infty(\Rm^{d(p+q)})}\le \sfc\theta$ uniformly in $z\in[0,Z]$. Along with Theorem \ref{thm:momentestim}, this shows that the statistical moments of the fully discrete solution of the paraxial model $\utdd$ are well approximated by those of $\us$. Since $\utdd$ lives on a torus, we define its extension to $\mathbb{R}^d$ as $\sfu^{\theta\Delta}_\delta(z,x)=\utdd(z,x)\chi_L(x)$, where $\chi_L$ is a smooth window function which equals $1$ for $x\in[-L/4,L/4]^d$ and $0$ for $x\in\mathbb{R}^d\setminus[-L/2,L/2]^d$. This leads to the following corollary for $D=[0,Z]\times\mathbb{R}^{(p+q)d}$ (see also the Supplementary Material):
    \begin{corollary}\label{coro:PWE_to_IS}
    We have $\|\mu_{p,q}[\sfu^{\theta\Delta}_\delta]-\mu_{p,q}[u]\|_{L^\infty(D)}\le \sfc_N[\theta + (\Delta z)^\beta +(\Delta k)^2 + K_k^{-N}+ (\Delta x)^N]$.
\end{corollary}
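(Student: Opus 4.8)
The plan is to prove the corollary by a single triangle-inequality chain through the sequence of intermediate fields
\[
u \;\longrightarrow\; \ut \;\longrightarrow\; \utd \;\longrightarrow\; \utdc \;\longrightarrow\; \utdp \;\longrightarrow\; \utdd \;\longrightarrow\; \sfu^{\theta\Delta}_\delta,
\]
writing $\mu_{p,q}[\sfu^{\theta\Delta}_\delta]-\mu_{p,q}[u]$ as the sum of the six consecutive differences and estimating each in $L^\infty(D)$. Since each $\mu_{p,q}[\cdot]$ is a fixed multilinear functional of its argument, a difference of two moments is controlled by the $L^\infty$/rms distance between the two fields times uniform bounds on the remaining factors — finite under \eqref{eqn:u0_bound}--\eqref{eq:intcorr}, with the constants of the form collected in Section \ref{sec:submain} — which is exactly the mechanism already used in Sections \ref{sec:ITO} and \ref{sec:paraxial}.

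Four of the six links are quoted verbatim. The paraxial-to-It\^o link $\|\mu_{p,q}[\ut]-\mu_{p,q}[u]\|_{L^\infty(D)}\le C\theta$ is the bound recalled in the Remark above (the adaptation of \cite[Theorem 4.3]{bal2024long}); the splitting link $\|\mu_{p,q}[\ut]-\mu_{p,q}[\utd]\|\le C(\Delta z)^\beta$ is Theorem \ref{thm:momentestim}(1) with $\sfv=\ut$; the medium-discretization link $\|\mu_{p,q}[\utd]-\mu_{p,q}[\utdc]\|\le C_N[(\Delta k)^2+K_k^{-N}]$ is Theorem \ref{thm:momentestim}(2) with $\sfv=\utd$; and the spatial-discretization link $\|\mu_{p,q}[\utdp]-\mu_{p,q}[\utdd]\|\le C_N(\Delta x)^N$ is Theorem \ref{thm:momentestim}(4). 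The periodization link $\|\mu_{p,q}[\utdc]-\mu_{p,q}[\utdp]\|\le C_N L^{-N}$ is Theorem \ref{thm:momentestim}(3); since $L=2\pi/\Delta k$, taking $N\ge 2$ gives $L^{-N}\le C_N(\Delta k)^2$, which is why $L^{-N}$ does not appear in the final bound. Theorem \ref{thm:momentestim}(3)--(4) are stated on $\Tm^d_L$, but $\chi_L\equiv1$ on $[-L/4,L/4]^d$ and the $\Tm^d_L$-periodic representative of $\utdp$ coincides with $\utdp$ on a fundamental cell, so those bounds transfer to the central cube $[-L/4,L/4]^{d(p+q)}\subset D$.

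The one genuinely new step is the windowing link: bounding $\|\mu_{p,q}[\utdd]-\mu_{p,q}[\sfu^{\theta\Delta}_\delta]\|_{L^\infty(D)}$ with $\sfu^{\theta\Delta}_\delta=\chi_L\,\utdd$ now viewed on all of $\Rm^d$. On $[-L/4,L/4]^{d(p+q)}$ the two moments agree identically, so I would only need to bound $|\mu_{p,q}[u]|$, $|\mu_{p,q}[\utdd]|$ and $|\mu_{p,q}[\sfu^{\theta\Delta}_\delta]|$ when at least one argument has modulus $\ge L/4$. For this I would reuse the decay machinery behind the periodization estimates: from \eqref{eqn:u0_bound} with $M$ large, $u_0$ has $L^2$-localized decay of order $\langle x\rangle^{-M}$, which is propagated uniformly in $z\in[0,Z]$, in $\theta\in(0,1]$, and through the three discretizations to give, for every $K$ and $N$, a bound $\sup_z \E|w(z,x)|^{2K}\le C_{K,N}\langle x\rangle^{-N}$ for $w\in\{u,\utdd\}$ (together with the pointwise-in-$x$ moment bounds already available). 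A generalized H\"older inequality peeling off the far-field factor then yields $|\mu_{p,q}[w](z,X,Y)|\le C_N L^{-N}$ when some component of $(X,Y)$ has modulus $\ge L/4$, and the same holds for $\sfu^{\theta\Delta}_\delta$ since $|\sfu^{\theta\Delta}_\delta|\le|\utdd|$ and $\mathrm{supp}\,\chi_L\subset[-L/2,L/2]^d$. As $L^{-N}\le C_N(\Delta k)^N$, this contribution is absorbed. Summing the six links and dropping subdominant ones gives the stated estimate.

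I expect the main obstacle to be exactly this windowing step — specifically, verifying that the decay-of-moments bound above is genuinely uniform in $\theta$ and stable under all three spatial discretizations at once. Nothing fundamentally new should be needed: the proofs of Theorem \ref{thm:pathwise_error}(2)--(3) and Theorem \ref{thm:momentestim}(2)--(3) already transfer decay of $u_0$ to decay of the (discretized) solution and of its moments, so the task is to repackage those estimates in the pointwise higher-moment form used here rather than to develop new tools. All other links are immediate from the quoted results plus the identity $L=2\pi/\Delta k$.
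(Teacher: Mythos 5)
Your proposal is correct and takes essentially the same route as the paper: the corollary is obtained there precisely by chaining the remark's $O(\theta)$ bound with the four parts of Theorem~\ref{thm:momentestim} (with $L^{-N}$ absorbed via $L=2\pi/\Delta k$), the windowing/domain-transfer step being left implicit, which you flesh out consistently with the paper's decay machinery. One small caveat: your claimed pointwise decay $\E|\utdd(z,x)|^{2K}\le C\langle x\rangle^{-N}$ must be read on the fundamental cell (the field is periodic), and it is stronger than needed --- the Cauchy--Schwarz/H\"older peeling already used in the paper requires only second-moment decay (Lemmas~\ref{lem:regulmu11} and~\ref{lem:boundut}, transferred through periodization and $\E\|\utdp-\utdd\|^2_{H^s}\le C(\Delta x)^{2N}$) together with uniformly bounded higher moments.
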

\end{remark}

\begin{remark}
    For $\sfv\in\{u,\ut\}$, we have used the triangle inequality several times in order to show the convergence $\sfv^{\Delta}_\delta $ to $\sfv^\Delta_\sharp$ to $\sfv^\Delta_c$ to $\sfv^\Delta$ to $\sfv$. For $\sfv\in\{u,u_c,u_\sharp,\ut,\utc,\ut_\sharp\}$, a similar analysis as in the proof for the first part of Theorems~\ref{thm:pathwise_error} and~\ref{thm:momentestim} can be extended to show a convergence of $\sfv^\Delta\to \sfv$.
\end{remark}

Even though our convegence results are similar for the paraxial and \IS\ models, the latter is simpler to analyze technically as statistical moments of its solution satisfy closed form equations. Such equations are not available for the paraxial model, which is analyzed using a full Duhamel expansion characterizing all orders of interaction of the propagating field with the underlying random medium. The proofs of Theorems \ref{thm:pathwise_error} and \ref{thm:momentestim} are presented in Section \ref{sec:ITO}  for the \IS\ model and Section \ref{sec:paraxial} for the paraxial model. Since $\kappa_2(z)$ plays no essential role in what follows, with bounds in \eqref{eq:intcorr} multiplied by $\|\kappa_2\|^2_\infty$, we set $\kappa_2(z)=1$ for the rest of the paper. 


Before presenting these proofs, we conclude this section by a useful lemma at the core of our analysis of the splitting algorithms. Splitting schemes of the form $e^{t(A+B)}\approx e^{tA}e^{tB}$ (for time independent $A$ and $B$) are often analyzed by estimating the commutator of the generators $[A,B]$ \cite{geometric2006}. Since $\Delta z$ is not the smallest scale in the problem, such commutator techniques cannot be directly applied to our problem. Our approach is based on the estimation of phase differences that appear between the exact and approximate schemes.
 
Let $0<\Delta$. For $0\le k\le N_z-1$, we recall standard collocation and midpoint-rule estimates
\begin{equation}\label{eq:orderthree} 
\Big| \! \dint_{k\Delta}^{(k+1)\Delta} \hspace{-0.8cm} \psi(t) \big(1-\Delta\delta\big(t-(k+\gamma)\Delta\big)\big)dt \Big| \leq \sfc \Delta^2 \|\psi\|_{1,\infty},\ \,
\Big| \! \dint_{k\Delta}^{(k+1)\Delta} \hspace{-0.8cm} \psi(t) \big(1-\Delta\delta\big(t-(k+\frac{1}{2})\Delta\big) \big)dt \Big| \leq \sfc \Delta^3 \|\psi\|_{2,\infty}.
\end{equation}
In particular for the integrals defined in \eqref{eq:intchi}, we thus have:
\begin{equation}\label{eq:errorchi}
\sup_{0\leq s\leq z}|\chi_0(s)-\chid_0(s)| \leq \sfc \langle z\rangle\Delta z \|\kappa_1\|_{1,\infty},\quad 
\sup_k |\chi_0(k\Delta z)-\chid_0(k \Delta z)| \leq \sfc (\Delta z)^2 \|\kappa_1\|_{2,\infty}.
\end{equation}
The form of the splitting approximation \eqref{eq:splittingexplicit} shows that the evolution associated to the Laplace operator is carried out at discrete times rather than continuously for the paraxial model. It is therefore natural to compare the two phases associated to such evolutions. This is the role of the following lemma, which we will use a number of times.
\begin{lemma}\label{lem:Strang}
Let $\psi(s)\in W^{1,\infty}[0,Z]$ and $\tb{g}\in\Rm$. Then, for $\gamma=\frac12$, we have uniformly in $0<z<Z$:
\begin{equation}\label{eq:errorStrang} |I(z)| \leq \sfc \langle z\rangle(\Delta z)^2 [ \|\psi\|_\infty |\tb{g}|^2 + |\tb{g}| \|\psi\|_{\infty} + \|\psi\|_{1,\infty}],\qquad I(z) :=\int_0^z \psi(s) \big( e^{i\chi_0(s)\tb{g}} - e^{i\chid_0(s)\tb{g}}\big) ds.
\end{equation}
\begin{proof}
We write $z=k\Delta z+\delta z$ with $0\leq \delta z<\Delta z$. The integral over $[k\Delta z,z]$ is bounded by $\Delta z$ times $|\chi_0(s)-\chid_0(s)| |\psi|$ and hence of order $O((\Delta z)^2)$ using \eqref{eq:errorchi}.  The rest of the integral defining $I(z)$ is a sum over an order of $(\Delta z)^{-1}$ terms of the following form.  Define $\tilde\psi(s)=\psi(s)e^{i\chi_0(s)\tb{g}}$. Since $|e^{ia}-1-ia|\leq \sfc |a|^2$, the integral over $s\in[k\Delta z,(k+1)\Delta z]$ satisfies:
\[ \Big|\int_{k\Delta z}^{(k+1)\Delta z} \tilde\psi(s) \big( 1 - e^{i(\chid_0(s)-\chi_0(s))\tb{g}}\big) ds \Big| \leq \Big|\int_{k\Delta z}^{(k+1)\Delta z} \tilde \psi(s) \big(\chi_0(s) - \chid_0(s)\big)\tb{g} ds \Big| + \sfc |\tb{g}|^2  \|\psi\|_\infty (\Delta z)^3.\]
The first term on the above right-hand side is 
\[\int_{k\Delta z}^{(k+1)\Delta z} ds\tilde\psi(s) \Big( \int_0^{k\Delta z} + \int_{k\Delta z}^s \Big) dt \kappa_1(t) (1-\tau_{\frac12}(t)).\]
The first contribution for $t\in[0,k\Delta z]$ is of order $\|\kappa_1\|_{2,\infty}\|\psi\|_\infty(\Delta z)^3$ by \eqref{eq:errorchi} while the second contribution for $\tilde\Psi$ an antiderivative of $\tilde\psi$ is given explicitly by 
\[\int_{k\Delta z}^{(k+1)\Delta z} dt (\tilde\Psi((k+1)\Delta z)-\tilde\Psi(t)) \kappa_1(t) (1-\Delta z\delta\big(t-(k+\frac{1}{2})\Delta z\big)\big).\]

We use \eqref{eq:orderthree} again to conclude.   
\end{proof}
\end{lemma}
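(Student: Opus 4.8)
The plan is to avoid commutator-type estimates (unavailable here since $\Delta z$ need not be small relative to the other scales) and to control instead the discrepancy between the continuous phase $\chi_0(s)=\int_0^s\kappa_1$ and the quadrature phase $\chid_0(s)$ directly inside the oscillatory integral defining $I(z)$. First I would write $z=K\Delta z+\delta z$ with $0\le\delta z<\Delta z$ and split $I(z)$ into the $K$ full cells $[k\Delta z,(k+1)\Delta z]$ plus the terminal cell $[K\Delta z,z]$. On the terminal cell the integrand is bounded by $|\phi|\,\|\psi\|_\infty\,|\chi_0(s)-\chid_0(s)|$, so the first bound in \eqref{eq:errorchi} together with $\delta z<\Delta z$ gives a contribution $O(\langle z\rangle\,|\phi|\,\|\psi\|_\infty(\Delta z)^2)$. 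On a full cell I would factor out $e^{i\chi_0(s)\phi}$, set $\tilde\psi(s)=\psi(s)e^{i\chi_0(s)\phi}$, and use $|e^{ia}-1-ia|\le C|a|^2$ with $a=(\chid_0(s)-\chi_0(s))\phi$; since $|\chi_0(s)-\chid_0(s)|\le C\Delta z$ on a full cell (it is $O((\Delta z)^2)$ at the left endpoint by the second bound in \eqref{eq:errorchi}, plus an $O(\Delta z)$ in-cell increment), the quadratic remainder is $O(|\phi|^2\|\psi\|_\infty(\Delta z)^3)$ per cell, hence $O(\langle z\rangle\,|\phi|^2\|\psi\|_\infty(\Delta z)^2)$ after summing the $K\lesssim\langle z\rangle/\Delta z$ cells. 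This produces the $\|\psi\|_\infty|\phi|^2$ term.

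It then remains to sum the linear terms $-i\phi\sum_k\int_{k\Delta z}^{(k+1)\Delta z}\tilde\psi(s)\big(\chi_0(s)-\chid_0(s)\big)\,ds$. I would write $\chi_0(s)-\chid_0(s)=\big(\int_0^{k\Delta z}+\int_{k\Delta z}^s\big)\kappa_1(t)(1-\tau_{\frac12}(t))\,dt$. The first, $s$-independent piece equals $\chi_0(k\Delta z)-\chid_0(k\Delta z)$, which is $O((\Delta z)^2)$ by \eqref{eq:errorchi}; multiplied by $\phi\int_{k\Delta z}^{(k+1)\Delta z}\tilde\psi=O(|\phi|\,\|\psi\|_\infty\Delta z)$ it contributes $O(|\phi|\,\|\psi\|_\infty(\Delta z)^3)$ per cell and $O(\langle z\rangle\,|\phi|\,\|\psi\|_\infty(\Delta z)^2)$ in total, i.e. the $|\phi|\,\|\psi\|_\infty$ term.

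The remaining, and most delicate, contribution is the in-cell term $-i\phi\sum_k\int_{k\Delta z}^{(k+1)\Delta z}\tilde\psi(s)\big(\int_{k\Delta z}^s\kappa_1(t)(1-\tau_{\frac12}(t))\,dt\big)ds$: bounding $\big|\int_{k\Delta z}^s\kappa_1(1-\tau_{\frac12})\big|\le C\Delta z$ crudely yields only $O(\Delta z)$ over $[0,z]$, so one power must be recovered from a cancellation. The key step is to exchange the order of integration and introduce an antiderivative $\tilde\Psi$ of $\tilde\psi$, rewriting the cell integral as $\int_{k\Delta z}^{(k+1)\Delta z}\kappa_1(t)\big(\tilde\Psi((k+1)\Delta z)-\tilde\Psi(t)\big)\big(1-\Delta z\,\delta(t-(k+\frac12)\Delta z)\big)\,dt$, which is exactly the centered midpoint quadrature error of $g(t)=\kappa_1(t)\big(\tilde\Psi((k+1)\Delta z)-\tilde\Psi(t)\big)$ on that cell; the order-$3$ estimate in \eqref{eq:orderthree} then bounds it by $C(\Delta z)^3\|g\|_{2,\infty}$. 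Estimating $\|g\|_{2,\infty}$ from $\tilde\Psi'=\tilde\psi$ and $\tilde\psi'=(\psi'+i\kappa_1\phi\psi)e^{i\chi_0\phi}$, and summing over the $O(\langle z\rangle/\Delta z)$ cells, yields the $\|\psi\|_{1,\infty}$ (and once more $\|\psi\|_\infty|\phi|^2$) contributions. I expect the main obstacle to be precisely this last step: the in-cell phase-error term is individually only first-order small, so one must recognize it as a midpoint quadrature error of a function with controlled $W^{2,\infty}$ norm in order to harvest the second-order accuracy of the centered rule cell by cell; some care with the powers of $|\phi|$ in the bound for $\|g\|_{2,\infty}$ is also needed.
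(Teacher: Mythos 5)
Your proposal is correct and follows essentially the same route as the paper's proof: the same split into a terminal cell plus full cells, the same expansion $|e^{ia}-1-ia|\le C|a|^2$ with $\tilde\psi=\psi e^{i\chi_0\phi}$, the same decomposition of the phase error into the piece on $[0,k\Delta z]$ controlled by \eqref{eq:errorchi} and the in-cell piece, and the same key step of exchanging the order of integration to recognize a midpoint quadrature error for $\kappa_1(t)(\tilde\Psi((k+1)\Delta z)-\tilde\Psi(t))$ handled by \eqref{eq:orderthree}. Your write-up is, if anything, more explicit than the paper about the cell-summation bookkeeping and the $W^{2,\infty}$ bound on the quadrature integrand.
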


Note that the estimate \eqref{eq:errorStrang} may easily be replaced by an estimate of order $O(\Delta z)$ for all \tb{$\gamma\in[0,1)$}.

\section{Convergence results for the It\^o-Schr\"{o}dinger model} 
\label{sec:ITO}
We first recall that ensemble averages of products of wavefields solving the \IS\ equation satisfy closed-form partial differential equations. This also holds for the semi-discrete splitting solution $\ud$. More precisely, \tb{from stochastic calculus, it can be shown that} the \tb{$(p+q)$th} moments of the solution to the It\^o-Schr\"{o}dinger equation $\mu_{p,q}=\mu_{p,q}[u]$ and its time splitting approximation $\mudpq=\mu_{p,q}[\ud]$ satisfy the (deterministic) partial differential equations \cite{fouque1998forward,garnier2014scintillation,miyahara1982stochastic}:
\begin{eqnarray} \label{eqn:mu_pq_PDE} 
    \partial_z\mu &=&i\phi(z)\big(\sum_{j=1}^p\Delta_{x_j}-\sum_{l=1}^q\Delta_{y_l}\big)\mu+\mathcal{U}_{p,q}(X,Y)\mu,\quad \mu(0,X,Y)=\prod_{j=1}^pu_0(x_j)\prod_{l=1}^qu_0^\ast(y_l)
\end{eqnarray}
\tb{where} $$\mathcal{U}_{p,q}(X,Y)=\sum_{j=1}^p\sum_{l=1}^qR(x_j-y_l)-\sum_{1\le j<j'\le p}R(x_j-x_{j'})-\sum_{1\le l<l'\le q}R(y_l-y_{l'})-\frac{p+q}{2}R(0),$$ $\phi(z)=\kappa_1(z)$ for $\mu=\mu_{p,q}:=\mu_{p,q}[\us]$ while $\phi(z)=\tbb{\tau_\gamma(z)}\kappa_1(z)$ for $\mu=\mu^\Delta_{p,q}:=\mu_{p,q}[\ud]$. 

All estimates are carried out in the Fourier domain for the transverse spatial variables. Denote by $\hat{\mu}_{p,q}:=\hat{\mu}_{p,q}[u]$ and $\hat{\mu}^\Delta_{p,q}=\hat{\mu}_{p,q}[u^\Delta]$ the partial complex symmetrized Fourier transform~\eqref{eq:hatmupq} of $u$ and $u^\Delta$. As in~\cite{bal2024complex}, it is convenient to construct phase compensated moments of the form
\begin{equation}\label{eqn:psi_pq_def}
    \Psi_{p,q}(z,v)=\hat{\mu}_{p,q}(z,v)e^{i\chi_0(z)\big(\sum_{j=1}^p\!|\xi_j|^2-\sum_{l=1}^q\!|\zeta_l|^2\big)},
    \ \
    \Psidpq(z,v)=\hmudpq(z,v)e^{i\chid_0(z)\big(\sum_{j=1}^p\!|\xi_j|^2-\sum_{l=1}^q\!|\zeta_l|^2\big)}\,.
\end{equation}
These phase compensated moments satisfy the evolution equations 
\begin{equation}\label{eqn:Psi_pq}
    \begin{aligned}
        \partial_z\Psi_{p,q}=\mathcal{L}_{p,q}[\chi_0(z)]\Psi_{p,q}(z,v),\qquad  \partial_z\Psidpq=\mathcal{L}_{p,q}[\chid_0(z)]\Psidpq(\tb{z,}v)\,,
    \end{aligned}
\end{equation}
with initial condition $\Psi_{p,q}(0,v)=\Psidpq(0,v)=\prod_{j=1}^p\hat{u}_0(\xi_j)\prod_{l=1}^q\hat{u}_0^\ast(\zeta_l)$. Here, the operator $\mathcal{L}_{p,q}$ is given by 
\begin{eqnarray}
    \label{eqn:L_pq_def}
\mathcal{L}_{p,q}[\Phi]\psi (z,v)&=& \dint_{\mathbb{R}^d}\hat{R}(k) \big(L_{p,q}[\Phi] \psi\big)(z,v;k,k) \frac{dk}{(2\pi)^d},\\
L_{p,q}[\Phi] \psi(z,v;k_1,k_2) &=&
\dsum_{j=1}^p\sum_{l=1}^q \psi(z,\xi_j-k_1,\zeta_l-k_1)e^{i\Phi(z)[g(\xi_j,k_2)-g(\zeta_l,k_2)]}
\nonumber \\
&&-\dsum_{1\le j<j'\le p}\psi(z,\xi_j-k_1,\xi_{j'}+k_1)e^{i\Phi(z)[g(\xi_j,k_2)+g(\xi_{j'},-k_2)]} \nonumber\\
&&-\dsum_{1\le l<l'\le q}\psi(z,\zeta_l-k_1,\zeta_{l'}+k_1)e^{-i\Phi(z)[g(\zeta_l,k_2)+g(\zeta_{l'},-k_2)]}-\frac{p+q}{2}\psi(z,v),\nonumber
\end{eqnarray}
where we defined $g(\xi,k) := |\xi|^2-|\xi-k|^2$. We start with the following regularity result.
\begin{lemma}\label{lemma:Psi_pq_bound}
    \tb{Let $r\in\Nm$, \eqref{eqn:u0_bound}  hold with $N=r$ and \eqref{eq:intcorr} hold with $M=r$.} 
    Let $\Psi\in \{\Psi_{p,q},\Psi_{p,q}^\Delta\}$. Then for $|\alpha|\leq r$, $\|\prod_{j=1}^{p+q}\langle v_j\rangle^\alpha \sup_{0\le s\le Z}\Psi(s,v)\| 
        + \|\sup_{0\le s\le Z}\partial^\alpha\Psi(s,v)\| \le \sfc$.
\end{lemma}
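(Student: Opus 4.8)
The plan is to prove uniform-in-$z$ bounds on $\Psi$ and its $v$-derivatives by a Gr\"onwall argument applied to the integral (Duhamel) form of the evolution equations \eqref{eqn:Psi_pq}. Write $\Psi(z,v) = \Psi(0,v) + \int_0^z \mathcal{L}_{p,q}[\Phi(s)]\Psi(s,v)\,ds$ where $\Phi$ is $\chi_0$ or $\chid_0$ depending on whether $\Psi = \Psi_{p,q}$ or $\Psi^\Delta_{p,q}$. The crucial structural feature, exploited already in \cite{bal2024complex}, is that the operator $\mathcal{L}_{p,q}[\Phi]$ acting in the $L^1_v$ (total variation) norm is \emph{bounded} uniformly in $\Phi$: indeed each of the $O((p+q)^2)$ terms in $L_{p,q}[\Phi]\psi(z,v;k,k)$ is, up to a unimodular phase factor, a shift of $\psi$ in the $v$ variables by $\pm k$, and after integrating $\frac{dk}{(2\pi)^d}$ against $\hat R(k)$ and using Fubini together with translation invariance of the $L^1$ norm, one gets $\|\mathcal{L}_{p,q}[\Phi]\psi(s,\cdot)\|_{L^1_v} \le C(p+q)^2 \|\hat R\|_{L^1} \|\psi(s,\cdot)\|_{L^1_v}$. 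Since $\|\Psi(0,\cdot)\|_{L^1_v} \le \prod \|\hat u_0(\xi_j)\|_{L^1}\cdot\prod\|\hat u_0(\zeta_l)\|_{L^1} < \infty$ by \eqref{eqn:u0_bound} with $N$ large enough (to make $\hat u_0 \in L^1$ via Cauchy--Schwarz against $\aver{\xi}^{-N}$), Gr\"onwall gives $\sup_{0\le s\le Z}\|\Psi(s,\cdot)\|_{L^1_v} \le C e^{C\aver{Z}(p+q)^2 \fC}$. Taking the sup over $s$ inside the norm is justified because the integrand is nonnegative and monotone, or more carefully by first bounding $\|\Psi(s,\cdot)\|_{L^1_v}$ pointwise in $s$ and then observing the bound is $s$-independent; the same remark applies to $\sup_s \Psi(s,v)$ inside the norm since $|\Psi(s,v)| \le |\Psi(0,v)| + \int_0^Z |\mathcal{L}_{p,q}[\Phi]\Psi(s',v)|\,ds'$ and one integrates in $v$.

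Second, I would handle the weighted norm $\|\prod_j \aver{v_j}^\alpha \Psi\|$. The point is that the weight $\aver{v_j}^\alpha$ is, up to constants, subadditive under the shifts $v_j \mapsto v_j \mp k$ appearing in $L_{p,q}$: one has $\aver{v_j - k}^\alpha \le C_\alpha (\aver{v_j}^\alpha + \aver{k}^\alpha)$. So when one commutes the weight past $\mathcal{L}_{p,q}[\Phi]$, one picks up (i) the same operator acting on $\prod_j\aver{v_j}^\alpha\Psi$, plus (ii) lower-order terms where a factor $\aver{k}^\alpha$ has been absorbed against $\hat R(k)$; the latter is finite precisely because \eqref{eq:intcorr} is assumed with $M = r \ge |\alpha|$, giving $\int \hat R(k)\aver{k}^\alpha\,dk \le \fC_r < \infty$. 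Thus the weighted norm satisfies a closed Gr\"onwall inequality with forcing controlled by the unweighted norm already bounded in the first step, and one again concludes $\sup_{0\le s\le Z}\|\prod_j\aver{v_j}^\alpha \Psi(s,\cdot)\| \le C$. Crucially all phase factors $e^{i\Phi(s)[\cdots]}$ are unimodular and drop out of every absolute-value/TV estimate, so the argument is genuinely uniform in $\Phi$ and hence applies verbatim to both $\Psi_{p,q}$ (with $\Phi = \chi_0$) and $\Psi^\Delta_{p,q}$ (with $\Phi = \chid_0$), with constants independent of $\Delta z$ and of $\theta$.

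Third, for the derivative bound $\|\sup_s \partial^\alpha \Psi(s,v)\|$ I would differentiate the Duhamel identity in $v$. Differentiating $\mathcal{L}_{p,q}[\Phi]\psi$ in $v_j$ hits two places: the shifted argument of $\psi$, producing $\mathcal{L}_{p,q}[\Phi](\partial^\alpha \psi)$-type terms (same operator, differentiated function), and the phase $e^{i\Phi(s)[g(\xi_j,k)-g(\zeta_l,k)]}$, whose $v$-gradient brings down a factor $\Phi(s)\,\partial_v g = \Phi(s)\cdot O(|k|)$. Here $|\Phi(s)| = |\chi_0(s)|$ or $|\chid_0(s)|$ is bounded by $C\aver{Z}\|\kappa_1\|_\infty$ uniformly (and uniformly in $\Delta z$), and the extra $|k|$-powers — up to $|\alpha| \le r$ of them from repeated differentiation — are again absorbed against $\hat R(k)$ using \eqref{eq:intcorr} with $M = r$. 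So $\partial^\alpha\Psi$ solves a Duhamel equation whose right side is $\mathcal{L}_{p,q}$ applied to $\partial^\alpha\Psi$ plus a forcing term that is a finite combination of lower-order derivatives $\partial^{\alpha'}\Psi$ ($|\alpha'|<|\alpha|$) integrated against $\hat R(k)\aver{k}^{|\alpha|-|\alpha'|}$. Inducting on $|\alpha|$ from $0$ up to $r$, each step is a linear Gr\"onwall inequality with forcing already bounded, so $\sup_s\|\partial^\alpha\Psi(s,\cdot)\|_{L^1_v} \le C$; the initial data contributes $\|\prod \partial^{\alpha}[\hat u_0\cdots\hat u_0^\ast]\|_{L^1_v}$, finite under \eqref{eqn:u0_bound} with $N$ chosen large relative to $r$ and $p+q$ (again by Cauchy--Schwarz).

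The main obstacle is bookkeeping rather than a deep difficulty: one must verify carefully that commuting the polynomial weights $\prod_j\aver{v_j}^\alpha$ and the derivatives $\partial^\alpha$ through $\mathcal{L}_{p,q}[\Phi]$ produces only (a) the same operator on the weighted/differentiated function and (b) strictly lower-order forcing terms whose $k$-dependence is no worse than $\aver{k}^r$, so that the induction on $|\alpha|$ genuinely closes and all constants stay uniform in $\Phi$ (hence in $\Delta z$) and in $\theta$. The subadditivity estimate $\aver{v-k}^\alpha \le C_\alpha(\aver{v}^\alpha + \aver{k}^\alpha)$ and the faithful translation-invariance of the $L^1_v$ norm are what make this work; once those are in hand, every estimate reduces to $\|\hat R\|_{L^1}$-type bounds supplied by \eqref{eq:intcorr} and the exponential-in-$(p+q)^2\aver{Z}\fC_r$ Gr\"onwall constant announced in the statement.
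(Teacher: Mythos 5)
Your proposal is correct and follows essentially the same route as the paper: a Gr\"onwall argument on weighted total-variation norms exploiting that $\mathcal{L}_{p,q}[\Phi]$ acts by unimodular phases and shifts $v\mapsto v\mp k$, with the $\aver{k}$ weights absorbed into $\hat R(k)$ via \eqref{eq:intcorr}, the sup-in-$s$ handled by the pointwise Duhamel bound integrated in $v$, and derivatives treated by differentiating the Duhamel identity (the phase being linear in $v$, so each derivative only brings down a bounded $\Phi(s)\cdot O(|k|)$ factor). The only cosmetic difference is that the paper closes the weighted estimate in one step using $\aver{v_m\pm k}\le 2\aver{v_m}\aver{k}$, while you use subadditivity and a two-step bootstrap; note that your cross terms are bounded by partially weighted (hence fully weighted) norms rather than the unweighted norm, but the Gr\"onwall inequality closes either way.
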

\begin{proof} From the equation for $\Psi$ and noting that $\langle
     v_m\rangle, \langle v_m+k\rangle\le 2\langle v_m\rangle\langle k\rangle$, $ \|\prod_{j=1}^{p+q}\langle v_j\rangle^\alpha\Psi(z,v)\|\le\|\prod_{j=1}^{p+q}\langle v_j\rangle^\alpha\Psi(0,v)\|+(p+q)^2\int_0^z\int_{\mathbb{R}^d} \! \hat{R}(k)\langle k\rangle^{2\alpha}\|\prod_{j=1}^{p+q}\langle v_j\rangle^\alpha\Psi(s,v)\|dkds$.
    From Gr\"{o}nwall's inequality \tb{ and smoothness assumptions on the medium in~\eqref{eq:intcorr}}, it follows that $\sup_{0\le s\le Z}\|\prod_{j=1}^{p+q}\langle v_j\rangle^\alpha\Psi(s,v)\| \le \sfc$. The bound for $\Psi$ now follows as for $0\le z\le Z$, $|\Psi(z,v)|\le |\Psi(0,v)|+\sum_m\int_0^Z\int_{\mathbb{R}^d}\hat{R}(k)|\Psi_{p,q}(s,v-A_mk)|dkds$ where $m$ sums over all the terms in the definition~\eqref{eqn:L_pq_def} of $\mathcal{L}_{p,q}$ for some linear operator $A_m$. Derivatives are bounded in a similar manner using standard regularity estimates.
\end{proof}

\subsection{\tb{Strong} estimates for the splitting scheme}
\label{sec:pathsplittingIS}

We now prove the first estimate in Theorem~\ref{thm:pathwise_error} for \tb{$\sfv=\us$, where $u$ and its time discretization $u^\Delta$ are} solutions of \eqref{eqn:Ito} and \eqref{eqn:Ito_split}, respectively. Define
$\mG(t)=\mF^{-1} e^{-it|\xi|^2}\mF$ as in \eqref{eq:mG} so that
\[ \sfv(z,x) = \mG(\chi_0(z)) u_0 (x) + \int_0^z \mG(\chi_s(z)) (\mS \sfv)(s),\quad \mS \sfv(s) :=-\frac {R(0)}2 \sfv ds + i \sfv dB(s)\]
with a similar expression for $\ud$ where $\chi_s(z)$ is replaced by $\chid_s(z)$. Thus, with $E=\us-\ud$:
\[ E(z) = \int_0^z \mG(\chid_s(z)) \mS E(s) + \delta\mG(0,z)u_0
+\int_0^z \delta\mG(s,z) \mS u (s),\qquad \delta\mG(s,z) := \mG(\chi_s(z))-\mG(\chid_s(z)),\]
which implies for some universal constant $\sfc$:
\[ \frac 1\sfc |E(z)|^2 \leq \Big| \int_0^z \! \!\mG(\chid_s(z)) E ds\Big|^2 +\Big| \int_0^z \! \!\mG(\chid_s(z)) E dB\Big|^2 +  |\delta\mG(0,z)u_0|^2 +\Big| \int_0^z \!\! \delta \mG(s,z) u ds\Big|^2 +\Big| \int_0^z \!\!  \delta \mG(s,z) u dB\Big|^2.  \]
\tb{The strategy is to} integrate in $x$ over $\Rm^d$ and take ensemble average of the above expression to obtain 
\begin{eqnarray}\label{eqn:Expec_E}
\E \|E(z)\|^2_2  \leq  \sfc \Big( \int_0^z \E \|E(s)\|^2_2 ds + (\Delta z)^2\Big)\,.    
\end{eqnarray}
The proof~\tb{of Theorem 2.1 with $\sfv=u$} then follows from the Gr\"onwall inequality. The~\tb{rest of this section is devoted to the proof of the} above inequality~\eqref{eqn:Expec_E}. 

First, from \eqref{eq:errorchi}, we have $|\chi_0(z)-\chid_0(z)|\leq \aver{z}\|\kappa_1\|_{1,\infty} \Delta z$ so that by $\delta \mG(s,z) =\mF^{-1} (e^{-i\chi_s(z)|\xi|^2}-e^{-i\chid_s(z)|\xi|^2})\mF$, then $\mathbb{E}\|\delta\mG(0,z)u_0\|^2_2 \le (\aver{z}\|\kappa_1\|_{1,\infty})^2(\Delta z)^2
\||\xi|^4|\hat{u}_0|^2(\xi)\|\leq \sfc (\Delta z)^2$.

Combining the Parseval relation, the \tb{Cauchy-Schwarz} inequality, and the above regularity result yields
\[
\mathbb{E}\|\int_0^z \delta \mG(s,z) u(s,x) ds\|^2_2\le (\aver{z}^2\|\kappa_1\|_{1,\infty})^2(\Delta z)^2\sup_{0\le s\le z}\||\xi|^4\hat{\mu}_{1,1}(s,\xi,\xi)\| \leq \sfc (\Delta z)^2.
\]
In the second inequality, we have used the regularity of the second moment from Lemma~\ref{lem:regulmu11} below with $2r=4$. 
From the unitarity of $\mathcal{G}$ and the Parseval relation, we have
\begin{equation}\label{eq:errorParseval}
\mathbb{E}\|\int_0^{z}\mathcal{G}\big(\chid_s(z)\big)[E(s,x)]ds\|^2_{2}= \mathbb{E}\int_{\mathbb{R}^d}\big|\int_0^z\tb{e^{-i\chid_s(z)|\xi|^2}}\hat{E}(s,\xi)ds\big|^2\frac{d\xi}{(2\pi)^d}\le z\int_0^z\mathbb{E}\|E(s,x)\|^2_{2}ds.
\end{equation}
From the martingale property of the stochastic integral and the It\^o formula, we also have \cite{miyahara1982stochastic}
\begin{equation}\label{eqn:dB_conc}
\begin{aligned}
\mathbb{E}\|\int_0^z\mathcal{G}\big(\chid_s(z)\big)[E(s)dB(s)]\|^2_{2}&=\int_0^z\int_{\mathbb{R}^{2d}}\mathbb{E}|\hat{E}(s,\xi-k)|^2\frac{\hat{R}(k)}{(2\pi)^d}dkd\xi ds
= R(0) \int_0^z\mathbb{E}\|E(s)\|^2_{2}ds.
\end{aligned}
\end{equation}
Finally, the bound for the last term $\mathbb{E}\|\int_0^z[\delta\mathcal{G}(s,z)u(s,x)dB(s,x)]\|^2_{2}$ is (with $dq=\frac{1}{(2\pi)^{3d}}d\xi dk dk' ds ds'$)
\begin{equation*}
\begin{aligned}
&\mathbb{E}\int_{\mathbb{R}^{3d}}\! \int_{0}^z \!\! \int_0^z \!
[e^{-i \chi_s(z)|\xi|^2} - e^{-i \chid_s(z)|\xi|^2}]
[e^{i \chi_{s'}(z)|\xi|^2} - e^{i \chid_{s'}(z)|\xi|^2}]
\hat{u}(s,\xi-k)\hat{u}^\ast(s',\xi-k')d\hat{B}(s,k)d\hat{B}^\ast(s',k')
dq
\\
=&\int_{\mathbb{R}^{2d}}\int_0^z|e^{-i \chi_s(z)|\xi|^2} - e^{-i \chi^\Delta_s(z)|\xi|^2}|^2\mathbb{E}|\hat{u}(s,\xi-k)|^2\hat{R}(k)\frac{dsdkd\xi}{(2\pi)^{2d}}
\\
\le& \int_{\mathbb{R}^{2d}}\int_0^z|\chi_s(z)-\chid_s(z)|^2|\xi|^4\mathbb{E}|\hat{u}(s,\xi-k)|^2\hat{R}(k)dsdkd\xi
\le \sfc\|\langle k\rangle^4\hat{R}(k)\|(\Delta z)^2\sup_{0\le s\le z}\|\langle \xi\rangle^4\hat{\mu}_{1,1}(s,\xi,\xi)\|\,.
\end{aligned}
\end{equation*}
This concludes the proof of the first part in Theorem~\ref{thm:pathwise_error} for $\sfv=u$.

We note that due to the concentration property of correlations of the measure $dB$ as in~\eqref{eqn:dB_conc}, we do not expect to obtain convergence rates better than $O(\Delta z)$ even for a centered splitting scheme. However as is shown next, the statistical moments of such approximations are still sufficiently regular which makes higher order discretization schemes possible.

\subsection{Convergence of moments for the splitting scheme}
This section is devoted to the proof of the first estimate in Theorem \ref{thm:momentestim} for $\sfv=\us$. We define $\Psidpq(z,v)=\Psi_{p,q}(z,v)+\Ed(z,v)$ to obtain that 
\begin{equation*}
    \partial_z \Ed=\mathcal{L}_{p,q}[\chid_0(z)]\Ed(z,v)  +  (\mathcal{L}_{p,q}[\chi_0(z)]-\mathcal{L}_{p,q}[\chid_0(z)])\Psi_{p,q}(z,v),\quad \Ed(0,v)=0,
\end{equation*}
which \tb{in integral form is} $$\Ed(z,v) = \int_0^z \mathcal{L}_{p,q}[\chid_0(s)]\Ed(s,v) ds + \int_0^z (\mathcal{L}_{p,q}[\chi_0(s)]-\mathcal{L}_{p,q}[\chid_0(s)])\Psi_{p,q}(s,v)ds.$$

We observe that $\|\mathcal{L}_{p,q}[\chid_0(z)]\|$ is uniformly bounded by a constant times $R(0)(p+q)^2$. As an application of the Gr\"onwall inequality, we thus obtain that $\sup_{z\in[0,Z]}\|\Ed(z)\| \leq \sfc  
 \sup_{z\in[0,Z]} \big\| \int_0^z (\mathcal{L}_{p,q}[\chi_0(s)]-\mathcal{L}_{p,q}[\chid_0(s)])\Psi_{p,q}(s,v)ds\big\|$.
We next observe that 
\[ \int_0^z ds (\mathcal{L}_{p,q}[\chi_0(s)]-\mathcal{L}_{p,q}[\chid_0(s)]) \Psi_{p,q}(s,v) = 
\dsum_{m} \dint_{\Rm^{d}} dk \hat R(k) \int_0^z ds  (e^{i\chi_0(s)G_m}-e^{i\chid_0(s)G_m})\Psi_{p,q}(s,v-A_mk),
\]
for some matrices $A_m$ (with two non-vanishing coefficients) and real-valued phases $G_m$ that are at most quadratic in $(v,k)$, and where $m$ runs over all $(p,q)$ terms in the definition \eqref{eqn:L_pq_def}.

For general \tb{$\gamma\in[0,1)$}, the above term is bounded by $O(\Delta z)$. Assume now that $\gamma=\frac12$ and apply Lemma \ref{lem:Strang} with $\psi(s)=\Psi_{p,q}(s,v-A_mk)$ and $\phi=G_m$ to obtain a bound in total variation for each $m$ given by
\[
 \langle Z\rangle\dint_{\Rm^{(p+q)d}} dv \dint_{\Rm^{d}} dk \hat R(k) \sup_s \big( |G_m||\partial_s\Psi_{p,q}|+|G_m|^2|\Psi_{p,q}|\big)(s,v-A_mk) .
\]
We verify that $|G_m| \leq \sfc(\aver{v}^2+\aver{k}^2) \leq \sfc (\aver{v-A_mk}^2+\aver{k}^2)$ and $|G_m|^2 \leq \sfc (\aver{v-A_mk}^4+\aver{k}^4)$.
Since $\partial_z \Psi_{p,q} = \mathcal{L}_{p,q}[\chi_0(z)]\Psi_{p,q}(z,v)$, we obtain similar bounds for both terms so that
\[
\sup_{z\in[0,Z]}\|\Ed(z)\| \leq \sfc (\Delta z)^2 \int_{\Rm^d} \aver{k}^4 \hat R(k) dk \int_{\Rm^{(p+q)d}} \aver{v}^4 \sup_s |\Psi_{p,q}|(s,v) dv \leq \sfc (\Delta z)^2
\]
by Lemma \ref{lemma:Psi_pq_bound}. This concludes the proof of the first part of Theorem~\ref{thm:momentestim} for $\sfv=u$.
\subsection{Error estimates for discrete, periodic random media}
In this section, we prove the second part in Theorems~\ref{thm:pathwise_error} and~\ref{thm:momentestim}  for $\sfv\in \{\us,\ud\}$. The wavefield in the discretized medium solves $dv_c=i\phi \Delta_x \sfv_c dz -\frac12 R_c(0) \sfv_c dz + i\sfv_c dB_c$ where $\phi=\kappa_1$ for $\sfv=u$ and $\phi=\tau_\gamma\kappa_1$ for $\sfv=\ud$ with
\[ \E d\hat B_c(s,\xi) d\hat B_c(t,\zeta) = \delta(t-s) \hat R_c(\xi,\zeta),\qquad \hat R_c(\xi,\zeta):= \chi_c^2(\xi) \delta(\xi-\zeta) \delta(\xi-q_\xi) \int_{\square_{q_\xi}} \hat R(\zeta) d\zeta.\]
So the difference $E=\sfv-\sfv_c$ satisfies with $\Phi_s(z)=\int_s^z \phi(t)dt$:
\[E(z,x) = \int_0^z \mG(\Phi_s(z)) [E(s) dB(s) + i\sfv_c(s)(dB-dB_c)(s) -\frac12 R(0) E(s)] ds.\]
We neglected the term $R_c(0)-R(0)=O(K_k^{-N})$ of comparable order to the third estimate in Theorem~\ref{thm:momentestim} to simplify the presentation. As in the preceding section this shows that $\E\| E \|^2_2$ is bounded by the following terms. Let $\mF_s$ be the filtration generated by $B(s)$, which is finer than that generated by $B_c(s)$ so that both $v$ and $\sfv_c$ are adapted for this filtration. As a consequence, we have for instance as in \eqref{eq:errorParseval} the following inequality that $\tb{\mathbb{E}}\big\|\int_0^z \mG(\Phi_s(z)) E(s) dB(s) \big\|_2^2
 \leq \sfc z  \int_0^z \E\| E(s) \|^2_2 ds$.
The term involving $R(0)$ is treated similarly. The norm of the second term is by the Parseval relation
\[\begin{aligned}
    &\E \int_0^z\int_0^z\dint_{\Rm^{3d}} e^{i(\Phi_s(z)-\Phi_{t}(z))|\xi|^2} \hat \sfv_c(s,\xi-k) \hat \sfv_c^*(t,\xi-q) d(\hat B-\hat B_c)(s,k) d(\hat B-\hat B_c)(t,q) dk d\xi dq  
    \\
    = &\int_0^z \int_0^z \dint_{\Rm^{3d}} \E[\hat \sfv_c(s,\xi-k) \hat \sfv_c^*(s,\xi-q)] \E [d(\hat B-\hat B_c)(s,k) d(\hat B-\hat B_c)(t,q)] dk d\xi dq = : I_c
\end{aligned}
\]
where we evaluated $\sfv$ and $\sfv^\ast$ and the complex exponentials at the same location $s$ anticipating the $\delta(t-s)$ contribution in 
\[
\begin{aligned}
    &\E [d(\hat B-\hat B_c)(s,k) d(\hat B-\hat B_c)(t,q)] = (2\pi)^d\delta(t-s) \Big( \hat R(k) \delta(k-q) \\
    + &\chi_c^2(k) \delta(k-q_k)\delta(q-q_k) \int_{\square_{q_k}} \hat R(\zeta)d\zeta - \chi_c(k) \delta(q_q-k) \hat R(q) - \chi_c(q) \delta(q_k-q) \hat R(k) \Big).
\end{aligned}
\]
Here $q_k$ denotes the center of the cube containing $k$. Then 
\[\begin{aligned}
    I_c&=\int_0^z\int_{\mathbb{R}^{2d}}\hat{R}(k)\big(\hat{\mu}_{1,1}[\sfv_c](s,\xi-k,\xi-k)-\hat{\mu}_{1,1}[\sfv_c](s,\xi-k,\xi-q_k)\chi_c(q_k)\big)dkd\xi ds\\
    &+\int_0^z\int_{\mathbb{R}^{2d}}\hat{R}(k)\big(\hat{\mu}_{1,1}[\sfv_c](s,\xi-q_k,\xi-q_k)-\hat{\mu}_{1,1}[\sfv_c](s,\xi-q_k,\xi-k)\chi_c(q_k)\big)\chi_c(q_k)dkd\xi ds\,.
    \end{aligned}
\]
The first integral can be bounded by $O\big((\Delta k))^2+K_k^{-N}\big)$ provided that $\E[\hat \sfv_c(s,\xi-k) \hat \sfv_c^*(s,\xi-k)]$ is integrable in $\xi$ with values in $C^2$ in the variable $k$. This is a consequence of \eqref{eq:concentrationmu11} in Lemma \ref{lem:regulmu11} below for \tb{$r\ge 1$} and by noting that for $\psi(\xi,k)\in C^2(\mathbb{R}^{2d})$, $\hat{R}(q_k)\cdot(k-q_k)$ and $\partial\psi(\xi-q_k,\xi-q_k)\cdot(k-q_k)$ integrate to $0$ on the cube $\square_{q_k}$ due to the mid point rule and expanding 
\tb{
\begin{equation*}
\begin{aligned}
 &\hat{R}(k)\big(\psi(\xi-k,\xi-k)-\psi(\xi-k,\xi-q_k)\big)=\big(\hat{R}(k)-\hat{R}(q_k)\big)\big(\psi(\xi-k,\xi-k)-\psi(\xi-k,\xi-q_k)\big)\\
&+\hat{R}(q_k)\big(\psi(\xi-k,\xi-k)-\psi(\xi-q_k,\xi-q_k)+\psi(\xi-q_k,\xi-q_k)-\psi(\xi-k,\xi-q_k)\big)   .    
\end{aligned}
\end{equation*}
}
The second integral is dealt with in a similar manner.
Combined with the above estimates and a Gr\"onwall inequality, we obtain the second part of the strong estimates in Theorem~\ref{thm:pathwise_error} for $\sfv\in\{\us,\ud\}$.

It remains to extend this result to the moment problem. 
Our analysis includes the case $\hat R_c(\xi,\zeta):= (2\pi)^d\chi_c^2(\xi) \delta(\xi-\zeta) \delta(\xi-q_\xi) |\square_{q_\xi}| \hat R(q_\xi)$, as implemented numerically as they have the same approximation properties (see Remark~\ref{rem:nu_c_disc}). Define $\Psi_{c,p,q}$ as we did $\Psi_{p,q}$ with $\hat R(k)$ replaced by $\hat R_c(k)$.
Let $\Psi=\Psi_{p,q}$ and $\Psi_c=\Psi_{c,p,q}$ as well as $\mL=\mL_{p,q}[\chi_0]$ while $\mL_c$ is defined as $\mL$ with $\hat R(k)$ replaced by $\hat R_c(k)$. Then $ \partial_z(\Psi-\Psi_c) = (\mL-\mL_c)\Psi+ \mL_c(\Psi-\Psi_c)$ 
so that we need to bound in the TV sense the source term 
\[ (\mL-\mL_c)\psi = \int_{\Rm^d} (\hat R(k)-\hat R_c(k)) (L_{p,q}[\chi_0(z)] \Psi (z,v;k,k))\frac{dk}{(2\pi)^d}.\]
It thus remains to show that $L_{p,q}[\chi_0(z)] \Psi (z,v;k,k)$ is integrable in $v$ when taking values in $C^2$ functions in $k$. From the form of $L_{p,q}$ and the quadratic nature of $g(\xi,k)$, this term is bounded by Lemma~\ref{lemma:Psi_pq_bound} with $r=4$.  This shows the second estimate in Theorem~\ref{thm:momentestim} for $\sfv\in\{\us,\ud\}$.

\subsection{Spatial concentration and periodization estimates} \label{sec:periodIS}
We now aim to prove the concentration and periodization estimate in the third parts of Theorem~\ref{thm:pathwise_error} and \ref{thm:momentestim} for $\tb{\sfv}\in\{\us,\ud\}$. We have the following estimate on the spatial concentration of the second moments.
\begin{lemma}\label{lem:regulmu11}
    Let $\mu(z,x,y)=\mu_{1,1}[\sfv](z,x,y)$ for $\sfv\in \{\us,\ud,\uc,\udc\}$ and $\hat \mu(z,\xi,\zeta)$ be its Fourier transform. Then for $r>0$ and \tb{\eqref{eqn:u0_bound}-\eqref{eq:intcorr}} valid for $M$ sufficiently large, we have
    \begin{equation}\label{eq:concentrationmu11}
        \sup_{0\leq z\leq Z}  \Big( \tb{\|\aver{x}^{2r}\mu(z,x,x)\|_{L^\infty(\mathbb{R}^d)}} +  \|\aver{\xi}^{2r} \hat\mu(z,\xi,\xi)\| 
        + \| \! \! \sum_{|\alpha|+|\beta| \leq 2r} \!\!\!\!  \big( |\partial^\alpha_\xi\partial^\beta_\zeta \hat \mu| + \aver{\xi}^{|\alpha|} \aver{\zeta}^{|\beta|} |\hat \mu| \big) \| \Big) \leq \tb{\sfc(r)}.
    \end{equation}
\end{lemma}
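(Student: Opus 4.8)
The plan is to establish \eqref{eq:concentrationmu11} by working entirely in the symmetrized Fourier domain and deriving a closed (or essentially closed) integral equation for the $(1,1)$-moment that is amenable to a Gr\"onwall argument with weights. Recall that for $\sfv\in\{\us,\ud\}$ the second moment $\hat\mu = \hat\mu_{1,1}[\sfv]$, after the phase compensation $\Psi_{1,1}(z,\xi,\zeta) = \hat\mu(z,\xi,\zeta) e^{i\chi_0(z)(|\xi|^2-|\zeta|^2)}$ (resp. with $\chid_0$), solves $\partial_z \Psi_{1,1} = \mathcal{L}_{1,1}[\chi_0(z)]\Psi_{1,1}$ from \eqref{eqn:Psi_pq}, where for $p=q=1$ the operator $\mathcal{L}_{1,1}$ in \eqref{eqn:L_pq_def} reduces to a single gain term and a loss term: $\mathcal{L}_{1,1}[\Phi]\psi(z,\xi,\zeta) = \int_{\Rm^d} \hat R(k)\big(\psi(z,\xi-k,\zeta-k) e^{i\Phi(z)[g(\xi,k)-g(\zeta,k)]} - \psi(z,\xi,\zeta)\big)\frac{dk}{(2\pi)^d}$. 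For $\sfv\in\{\uc,\udc\}$ the same identity holds with $\hat R$ replaced by $\hat R_c$; since $\|\hat R_c\|$ and all weighted norms $\int \aver{k}^M \hat R_c(k)\,dk$ are controlled by the corresponding quantities for $\hat R$ (the cube averages in \eqref{eq:discretevar}–\eqref{eq:mediumcor} only redistribute mass within cubes of size $\Delta k$), every estimate below is uniform over the four choices of $\sfv$; I will carry the argument out for $\hat R$ and remark that it transfers verbatim.

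First I would bound the weighted sup norms. For the weight $\aver{\xi}^{2r}\aver{\zeta}^{2r}$ applied to $\Psi_{1,1}$: using $\aver{\xi-k}\le 2\aver{\xi}\aver{k}$ (and likewise in $\zeta$), the integral form of \eqref{eqn:Psi_pq} gives
\[
\big\| \aver{\xi}^{2r}\aver{\zeta}^{2r}\Psi_{1,1}(z,\cdot)\big\| \le \big\|\aver{\xi}^{2r}\aver{\zeta}^{2r}\Psi_{1,1}(0,\cdot)\big\| + C\int_0^z \Big(\int_{\Rm^d}\hat R(k)\aver{k}^{4r}dk\Big)\big\|\aver{\xi}^{2r}\aver{\zeta}^{2r}\Psi_{1,1}(s,\cdot)\big\|\,ds,
\]
and Gr\"onwall closes this provided \eqref{eq:intcorr} holds with $M\ge 4r$ and \eqref{eqn:u0_bound} with $N\ge 2r$ (the initial datum is $\hat u_0(\xi)\hat u_0^\ast(\zeta)$, so $\|\aver{\xi}^{2r}\aver{\zeta}^{2r}\Psi_{1,1}(0,\cdot)\| = (\int\aver{\xi}^{2r}|\hat u_0(\xi)|^2 d\xi)^2 \cdot (2\pi)^{-d}$ type bound). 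Since $|\hat\mu| = |\Psi_{1,1}|$ pointwise, this immediately yields $\sup_z\|\aver{\xi}^{2r}\hat\mu(z,\xi,\xi)\|$ on the diagonal $\xi=\zeta$ (bound $\aver{\xi}^{2r}\le \aver{\xi}^{2r}\aver{\zeta}^{2r}$ there). The physical-space weighted bound $\sup_z \aver{x}^{2r}|\mu(z,x,x)|$ follows from $\mu(z,x,x) = \int_{\Rm^{2d}} \hat\mu(z,\xi,\zeta) e^{ix\cdot(\xi-\zeta)}\frac{d\xi d\zeta}{(2\pi)^{2d}}$ together with the standard device: $\aver{x}^{2r}e^{ix\cdot(\xi-\zeta)}$ is, up to constants, $(1-\Delta_{\xi-\zeta})^r e^{ix\cdot(\xi-\zeta)}$, so integration by parts in $\xi,\zeta$ moves $2r$ derivatives onto $\hat\mu$, and then I invoke the third term of \eqref{eq:concentrationmu11}, reducing the physical decay to derivative control in Fourier. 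Concretely $|\aver{x}^{2r}\mu(z,x,x)| \le C\sum_{|\alpha|+|\beta|\le 2r}\|\partial_\xi^\alpha\partial_\zeta^\beta \hat\mu(z,\cdot,\cdot)\|$ after the substitution $(\xi,\zeta)\mapsto(\xi,\xi-\zeta)$.

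The remaining and most delicate piece is the mixed derivative-and-weight bound, the third summand of \eqref{eq:concentrationmu11}. I would differentiate the integral equation for $\Psi_{1,1}$ in $\xi$ and $\zeta$: each $\partial_\xi$ or $\partial_\zeta$ either lands on $\Psi_{1,1}(s,\xi-k,\zeta-k)$ (producing a lower-order-in-derivatives, same-weight term that feeds the Gr\"onwall recursion) or on the oscillatory factor $e^{i\chi_0(s)[g(\xi,k)-g(\zeta,k)]}$. The latter is the crux: since $g(\xi,k) = |\xi|^2 - |\xi-k|^2 = 2\xi\cdot k - |k|^2$ is \emph{linear} in $\xi$ with $\partial_\xi g = 2k$, differentiating the phase brings down a factor $i\chi_0(s)\cdot 2k$ (and higher derivatives of $g$ in $\xi$ vanish), so the worst-case contribution is a polynomially growing prefactor $\aver{\chi_0(s)}^{|\alpha|+|\beta|}\aver{k}^{|\alpha|+|\beta|}$ multiplying $|\Psi_{1,1}(s,\xi-k,\zeta-k)|$ with no derivatives — harmless once \eqref{eq:intcorr} gives enough moments of $\hat R$ and the first part of this lemma (or Lemma~\ref{lemma:Psi_pq_bound}) controls $\sup_s\|\aver{v}^{M}\Psi_{1,1}(s,v)\|$. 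Crucially $\chi_0(s)$ is bounded on $[0,Z]$ (and $\chid_0(s)$ too, uniformly in $\Delta z$, by \eqref{eq:errorchi}), so these prefactors contribute only $s$-independent constants $C_r(Z)$ — this is exactly why the phase compensation was introduced and why the bound is uniform in $\theta\le1$ and in the discretization. I would organize the induction on $n = |\alpha|+|\beta|$ from $0$ to $2r$: at level $n$, the source in the Gr\"onwall inequality for $\sum_{|\alpha|+|\beta|=n}\|\aver{\xi}^{a}\aver{\zeta}^{b}\partial^\alpha_\xi\partial^\beta_\zeta\Psi_{1,1}\|$ involves only terms already bounded at levels $<n$ (plus the self-referential term with $\mathcal L$-kernel that Gr\"onwall absorbs), requiring $M\gtrsim 4r$ and $N\gtrsim 2r$ in \eqref{eq:intcorr}, \eqref{eqn:u0_bound}. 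The main obstacle is purely bookkeeping: tracking that each differentiation of the phase only costs powers of $k$ and of the (bounded) accumulated coefficient $\chi_0$, never a derivative of $\Psi_{1,1}$ that would break the triangular structure of the induction; the linearity of $g$ in its first argument is what makes this work and should be stated explicitly.
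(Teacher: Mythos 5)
Your proposal is correct and takes essentially the same route as the paper: a closed equation for the $(1,1)$-moment in the Fourier variables, a Duhamel/Gr\"onwall argument on weighted total-variation norms in which moments of $\hat R$ (or of the discrete measure replacing $\hat R\,dk$ for $\uc,\udc$) absorb the factors created by the quadratic phases, evaluation on the diagonal $\zeta=\xi$ for the second estimate, and Fourier duality (Parseval/integration by parts) to convert derivative control into the spatial decay of $\mu(z,x,x)$. The only organizational difference is that you work with the phase-compensated $\Psi_{1,1}$ so that phase derivatives cost only powers of $k$, whereas the paper keeps the phase $e^{i\Phi(z-s)(|\xi|^2-|\zeta|^2)}$ inside the Duhamel formula and closes a single Gr\"onwall on the combined quantity $I(s)$ that already contains the weighted terms $\aver{\xi}^{|\alpha|}\aver{\zeta}^{|\beta|}|\hat\mu|$ --- an equivalent bookkeeping, since undoing your compensation reintroduces exactly those weights.
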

\begin{proof} 
From Parseval's theorem, the first and third estimates are equivalent. In all instances of $\sfv \in \{\us,\ud\}$, we have $ \partial_z \mu (z,x,y) = i\phi(z) (\Delta_x-\Delta_y) \mu + (R(x-y)-R(0)) \mu$ where $\phi=\kappa_1$ for $\sfv=u$ and $\phi=\tau_\gamma\kappa_1$ for $\sfv=\ud$ so that in the Fourier domain and for $\Phi(z)=\int_0^z \phi(s)ds$ we have
\[ 
\hat \mu(z,\xi,\zeta) = e^{i\Phi(z)(|\xi|^2-|\zeta|^2)} \hat \mu(0,\xi,\zeta) + \int_0^z \int_{\Rm^d} \hat R(k) e^{i\Phi(z-s)(|\xi|^2-|\zeta|^2)} (\hat \mu(s,\xi-k,\zeta-k)-\hat \mu(s,\xi,\zeta)) \frac{dk}{(2\pi)^d}.
\]
Using that $\aver{\xi}\leq \aver{\xi-k}\aver{k}$ and that $\aver{k}^N\hat R(k)$ is integrable for $N$ sufficiently large, we find that $I(z) \leq \sfc \Big(1+ \int_0^z I(s) ds\Big) $, $ 
I(s):= \| \sum_{|\alpha|+|\beta| \leq 2r} \big( |\partial^\alpha_\xi\partial^\beta_\zeta \hat \mu| + \aver{\xi}^{|\alpha|} \aver{\zeta}^{|\beta|} |\hat \mu| \big) \| $. The third term is bounded by Gr\"{o}nwall's inequality. Evaluating at $\zeta=\xi$ gives the second estimate in \eqref{eq:concentrationmu11} by the same mechanism. The proof for $\sfv=\{u_c,u^\Delta_c\}$ is similar, with $\hat{R}(k)dk$ replaced by the discrete measure $\sum_{q\in\mathbb{Z}_\Delta^d}\hat{R}(k)\delta(k-q_k)\chi_c^2(k)$.
\end{proof} 

We know from \eqref{eq:concentrationmu11} in Lemma \ref{lem:regulmu11} that the corresponding moment $|\mu_{1,1}[\sfv](z,x,x)|\leq \sfc \aver{x}^{-2N}$ uniformly in $z\in [0,Z]$. This implies that $\aver{x}^{2N}\E|\sfv|^2\leq c$ uniformly in $(z,x)\in[0,Z]\times\Rm^d$.

We recall the periodization of $\sfv$ on $\Tm_L^d$ given by $\sfv_\sharp(z,x) = \sum_{n\in \mathbb{Z}^d} \sfv(z,x-nL)$, $ \sfv_\sharp(z,x)-\sfv(z,x) = \sum_{0\not=n\in \mathbb{Z}^d} \sfv(z,x-nL)$. The above decay implies that 
\begin{equation}\label{eq:errorwsharp}
\E |\sfv_\sharp-\sfv|^2(z,x)  \leq  \sum_{m\not=0}\sum_{n\not=0} \frac \sfc{\aver{x-mL}^{N}\aver{x-nL}^{N}}\leq \sfc L^{-2N}\,.
\end{equation}
Using the Cauchy-Schwarz inequality and provided that $N>d$, after integration over $\Xm=\Tm_L^d$ we obtain the third estimate in Theorem~\ref{thm:pathwise_error} for $\sfv\in \{\us,\ud\}$.

Before presenting the proof of the moment estimates in the third part of Theorem~\ref{thm:momentestim}, we verify the following lemma. \tb{We recall that $(X,Y)=(x_1,\cdots,x_p,y_1,\cdots,y_q)$ is a collection of $p+q$ points in $\mathbb{R}^d$.}
\begin{lemma}\label{lem:reg_torus}
    Let $\sfv\in\{u,\ud\}$. \tb{Let $r\in\Nm$, \eqref{eqn:u0_bound}  hold with $N=r$ and \eqref{eq:intcorr} hold with $M=r$.} For $|\alpha|\le r$, the spatial moments satisfy $\sup_{0\le s\le Z}|\partial^\alpha\mu_{p,q}[\sfv_\sharp](z,X,Y)|\le \sfc$.
    \begin{proof}
         Let $\phi=\kappa_1$ for $\sfv=u$ and $\phi=\kappa_1\tau_\gamma$ when $\sfv=\ud$ with antiderivative $\Phi_s(z)=\int_s^z\phi(t)dt$. Expanding $\sfv_\sharp$ in terms of its discrete Fourier coefficients as $\sfv_\sharp(z,x)=\sum_{l\in\mathbb{Z}^d}\hat{\sfv}_l(z)e^{i\Delta k l\cdot x}$, we find that $\hat{\sfv}_l$ satisfies the equation
\[
d\hat{\sfv}_l=-i\phi(z)|\Delta k l|^2\hat{\sfv}_ldz-\frac{R_c(0)}{2}\hat{\sfv}_ldz+i\sum_{k\in\mathbb{Z}^d}\hat{\sfv}_{l-k}d\hat{B}_{c,k}\,,
\]
for the discrete Fourier coefficients $d\hat{B}_{c,l}(z)=d\hat{B}_c(z,\Delta k l)$. For $\sfl=(l_1,\cdots,l_{p+q})$, $l_j\in\mathbb{Z}^{d}$ we define $\hat{\mu}_{\sfl}(z)=\mathbb{E}\prod_{m=1}^p\hat{\sfv}_{l_m}(z)\prod_{n=p+1}^{p+q}\hat{\sfv}^\ast_{l_n}(z)$. 
Also, let $\epsilon_m=1$ when $1\le m\le p$ and $\epsilon_m=-1$ when $p+1\le m\le p+q$. As in~\eqref{eqn:psi_pq_def}, we define the phase compensated coefficients $\Psi_\sfl(z)=\hat{\mu}_\sfl(z)e^{i\Phi_0(z)\sum_{m=1}^{p+q}\epsilon_m|\Delta kl_m|^2}$. Then $\Psi_\sfl$ satisfies
\[
\partial_z\Psi_\sfl=-\sum_{m=1}^{p+q}\sum_{m\neq n=1}^{p+q}\epsilon_m\epsilon_n\sum_{j\in\mathbb{Z}^d}\hat{R}_j\Psi_{l_m-\epsilon_mj,l_n+\epsilon_nj}e^{i\Phi_0(z)|\Delta k|^2[\epsilon_mg(l_m,\epsilon_m j)+\epsilon_ng(l_n,-\epsilon_n j)]}-\frac{p+q}{2}R_c(0)\Psi_\sfl\,,
\]
with initial condition $\Psi_\sfl=\hat{\mu}_\sfl(0)$. We observe that the right hand side is absolutely summable with weight $\langle\Delta k \sfl\rangle^{|\alpha|}$, which gives $\sup_{0\le s\le Z}\sum_{\sfl\in\mathbb{Z}^{(p+q)d}}\langle \Delta k\sfl\rangle^{|\alpha|}|\Psi_\sfl|(s)\le \sfc$. The bound for $\partial^\alpha\mu_{p,q}[\sfv_\sharp]$ now follows after summing the discrete Fourier coefficients.
    \end{proof}
\end{lemma}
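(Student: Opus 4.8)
The plan is to exploit the band-limited structure of the medium. For $\sfv\in\{u,\ud\}$ the periodized field $\sfv_\sharp$ lives on $\Tm_L^d$ and is driven only by $\nu_c$ (equivalently $B_c$), whose transverse Fourier content is supported on the lattice $(\Delta k\,\Zm)^d$, and one checks (summing the equation over the period shifts, using that $\mathrm{d}B_c$ is itself $L$-periodic) that $\sfv_\sharp$ solves the splitting equation \eqref{eqn:Ito_split} on the torus with initial datum $(u_0)_\sharp$. Hence $\sfv_\sharp$ admits a true discrete Fourier expansion $\sfv_\sharp(z,x)=\sum_{l\in\Zm^d}\hat\sfv_l(z)e^{i\Delta k\,l\cdot x}$, and I would first write down the linear system of SDEs satisfied by the coefficients, obtained by projecting \eqref{eqn:Ito_split} (with $\phi(z)$ equal to $\kappa_1(z)$ or $(\tau_\gamma\kappa_1)(z)$) mode by mode:
\[
\mathrm{d}\hat\sfv_l=-i\phi(z)|\Delta k\,l|^2\hat\sfv_l\,\mathrm{d}z-\tfrac{R_c(0)}{2}\hat\sfv_l\,\mathrm{d}z+i\sum_{k\in\Zm^d}\hat\sfv_{l-k}\,\mathrm{d}\hat B_{c,k}(z),
\]
the interaction being a genuine discrete convolution because $\mathrm{d}B_c$ is a trigonometric polynomial in $x$; the initial data are the Fourier coefficients of $(u_0)_\sharp$, namely $L^{-d}\hat u_0(\Delta k\,l)$.

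Next I would pass to the Fourier moment hierarchy. For $\sfl=(l_1,\dots,l_{p+q})$ with $l_m\in\Zm^d$, set $\hat\mu_\sfl(z)=\E\prod_{m=1}^p\hat\sfv_{l_m}(z)\prod_{n=p+1}^{p+q}\hat\sfv_{l_n}^\ast(z)$; by the It\^o formula, using the concentration of the $\mathrm{d}\hat B_c$-correlations exactly as in the derivation of \eqref{eqn:mu_pq_PDE}, the $\hat\mu_\sfl$ solve a \emph{closed} linear system. To remove the purely oscillatory free-transport phases I would introduce, in analogy with \eqref{eqn:psi_pq_def}, the phase-compensated coefficients $\Psi_\sfl(z)=\hat\mu_\sfl(z)\,e^{i\Phi_0(z)\sum_{m}\epsilon_m|\Delta k\,l_m|^2}$ with $\epsilon_m=+1$ for $m\le p$, $\epsilon_m=-1$ for $m>p$, and $\Phi_0(z)=\int_0^z\phi$. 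A short It\^o computation turns this into a system of the schematic form
\[
\partial_z\Psi_\sfl=-\sum_{m\ne n}\epsilon_m\epsilon_n\sum_{j\in\Zm^d}\hat R_j\,\Psi_{l_m-\epsilon_m j,\,l_n+\epsilon_n j}\,e^{i\Phi_0(z)|\Delta k|^2[\,\epsilon_m g(l_m,\epsilon_m j)+\epsilon_n g(l_n,-\epsilon_n j)\,]}-\tfrac{p+q}{2}R_c(0)\,\Psi_\sfl,
\]
with $\hat R_j=\int_{\square_{q_j}}\hat R(k)\,\mathrm{d}k$ (or $(2\pi\Delta k)^d\hat R(q_j)$ for the medium of Remark~\ref{rem:nu_c_disc}) and phases of modulus one whose precise form plays no role here.

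The core of the proof is then a Gr\"onwall estimate in a \emph{weighted} $\ell^1$ norm. Fixing $\alpha=(\alpha_1,\dots,\alpha_{p+q})$ with $|\alpha|\le r$, set $S(z)=\sum_{\sfl}\big(\prod_m\aver{\Delta k\,l_m}^{|\alpha_m|}\big)|\Psi_\sfl(z)|$. Using sub-multiplicativity $\aver{\Delta k\,l_m}\le\aver{\Delta k(l_m-\epsilon_m j)}\aver{\Delta k\,j}$ to absorb the convolution shift, together with the uniform-in-$\Delta k$ bound $\sum_{j\in\Zm^d}\aver{\Delta k\,j}^{M}|\hat R_j|<\infty$ (a Riemann sum for the integral in \eqref{eq:intcorr}, with $M=M(r)$ large), and bounding every exponential by $1$, one obtains $S(z)\le S(0)+C(p+q)^2\int_0^z S(s)\,\mathrm{d}s$. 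Since $S(0)$ factorizes into $p+q$ sums $L^{-d}\sum_l\aver{\Delta k\,l}^{|\alpha_m|}|\hat u_0(\Delta k\,l)|$, each a Riemann sum controlled by $\int_{\Rm^d}\aver{\xi}^{|\alpha_m|}|\hat u_0(\xi)|\,\mathrm{d}\xi$ (finite under \eqref{eqn:u0_bound} for $N$ large, uniformly in $\Delta k$), Gr\"onwall yields $\sup_{[0,Z]}S(z)\le C$. Differentiating the Fourier series term by term then gives
\[
|\partial^\alpha\mu_{p,q}[\sfv_\sharp](z,X,Y)|\le\sum_{\sfl}\Big(\prod_m\aver{\Delta k\,l_m}^{|\alpha_m|}\Big)|\hat\mu_\sfl(z)|=S(z)\le C
\]
for all $(z,X,Y)\in[0,Z]\times(\Tm_L^d)^{p+q}$, which is the claim. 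I expect the only genuinely delicate step to be transferring the total-variation hypotheses \eqref{eqn:u0_bound}--\eqref{eq:intcorr} into the discrete weighted $\ell^1$ bounds above \emph{uniformly} in the discretization parameter $\Delta k$ (Riemann-sum comparisons); closedness of the moment hierarchy, the phase compensation, and the final Gr\"onwall step are all routine once the band-limited structure of $\nu_c$ is invoked.
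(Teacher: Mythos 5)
Your proposal is correct and follows essentially the same route as the paper's proof: discrete Fourier expansion of $\sfv_\sharp$ on $\Tm_L^d$, the closed moment hierarchy for the phase-compensated coefficients $\Psi_\sfl$, a weighted $\ell^1$ Gr\"onwall bound, and term-by-term differentiation of the Fourier series. The only difference is that you spell out the details (periodicity of the driving noise, sub-multiplicativity of the weights, Riemann-sum uniformity in $\Delta k$) that the paper compresses into its one-line absolute-summability observation.
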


Again, for $\sfv\in\{u,\ud\}$ consider the moments $\mu_{p,q}[\sfv](z,X,Y)$ and $\mu_{p,q}[\sfv_\sharp](z,X,Y)$. From Lemma~\ref{lem:reg_torus}, the latter moment satisfies an equation over $\Tm_L^d$ with bounded TV norm in the Fourier domain implying that $|\mu_{p,q}[\sfv_\sharp](z,X,Y)|\leq \sfc$ uniformly in $(X,Y)$ and $z\in [0,Z]$.
Now
\begin{equation}\label{eq:decmoments}
|\mu_{p,q}[\sfv]-\mu_{p,q}[\sfv_\sharp]|(z,X,Y) \leq \sum_{j=1}^{p+q}  \Big| \E \prod_{k=1}^{j-1}w(x_k) (w-w_\sharp)(x_j) \prod_{k=j+1}^{p+q} w_\sharp(x_k) \Big|
\end{equation}
where we denote by $x_k$ all variables in $(X,Y)$ and with $w=\sfv$ or $\sfv^*$ depending on the considered variable. This implies $|\mu_{p,q}[\sfv]-\mu_{p,q}[\sfv_\sharp]|(z,X,Y) \leq \sum_j (\E |\prod_{k=1}^{j-1}\sfv(x_k)|^4)^{\frac14} (\E |\prod_{k=j+1}^{p+q} \sfv_\sharp(x_k)|^4)^{\frac14} (\E|\sfv-\sfv_\sharp|^2(x_j))^{\frac12} \leq \sfc(p+q) L^{-N}$. 
This proves the third part of Theorem~\ref{thm:momentestim} for $\sfv=\{u,\ud\}$.

\subsection{Spatial discretization of solution} 
This section obtains the fourth estimates in Theorems~\ref{thm:pathwise_error} and \ref{thm:momentestim} for $\sfv=u$ and $\sfv=\ud$.  Consider $\sfv_\sharp\in\{\up,\udp\}$. The fully discretized solution $\sfv_\delta$ is defined in \eqref{eq:discreteeq}. We find
\[
 d\Pi \sfv_\sharp = i \phi \Delta \Pi \sfv_\sharp dz + \Pi (\sfv_\sharp \circ dB_c)
 =  i \phi \Delta \Pi \sfv_\sharp  dz+ \Pi (\Pi\sfv_\sharp \circ dB_c) + \Pi ((I-\Pi)\sfv_\sharp \circ dB_c)
\]
using $\sfv\circ dB_c\equiv-\frac12 R_c(0)\sfv dz+ \sfv dB_c$. Defining $\delta \sfv=\Pi\sfv_\sharp-\sfv_\delta$, we thus have
\[ 
d \delta \sfv =i \phi \Delta \delta \sfv dz + \Pi (\delta \sfv \circ dB_c)  + \Pi ((I-\Pi)\sfv_\sharp \circ dB_c).
\]
\tb{As in earlier sections, we obtain for $I_\delta(z):=\E \int_{\Tm_L^d} |\delta \sfv(z,x)|^2 dx$ the following estimate:}
\[
I_\delta(z) \leq \sfc \int_0^z I_\delta(s) ds + \E \int_{\Tm^d_L} \Big| \int_0^z \mG_\sharp[\Phi_{s}(z)] \Pi ((I-\Pi)\sfv_\sharp \circ dB_c)\Big|^2 dx.
\]
Here $\mG_\sharp[\Phi_s(z)]$ is the solution operator of $(\partial_z-i\phi \Delta_x)$ on $[s,z]\times\Tm_L^d$. By unitarity of $\mG$ and contraction of $\Pi$, as well as adaptivity of $\sfv_\sharp$ for the filtration generated by $B_c(z)$, we deduce as in Section \ref{sec:pathsplittingIS} that the above source term is bounded by a constant times $\E\|(I-\Pi)\sfv_\sharp\|_2^2$. We compute
\[
\E\int_{\Tm^d_L} |(I-\Pi)\sfv_\sharp|^2  dx\leq\sum_{|l|>N_x} \E |\hat \sfv_l|^2 \leq (\Delta k N_x)^{-2N} \sum_{l} \E \aver{\Delta kl}^{2N} |\hat \sfv_l|^2\leq  \sfc (\Delta x)^{2N},
\]
since $\E \|\partial^\alpha \sfv_\sharp\|^2_{L^2(\Tm_L^d)}\leq \sfc$ for $|\alpha|\leq N$ from Lemma~\ref{lem:reg_torus}. This proves the final estimate in Theorem~\ref{thm:pathwise_error} for $\up$ and $\udp$ and their fully discrete approximations. The extension to errors for moments in the fourth part of Theorem~\ref{thm:momentestim} related to $\up$ and $\udp$ then follows the same steps as for~\eqref{eq:errorwsharp} and \eqref{eq:decmoments}.
\section{Convergence results for the paraxial equation}
\label{sec:paraxial}

In this section, we prove the estimates presented in Theorems \ref{thm:pathwise_error} and \ref{thm:momentestim} for the paraxial model $\sfv=\ut$.   Unlike the \IS\ model, statistical moments of the paraxial solution $\ut$ do not satisfy closed form equations as in \eqref{eqn:mu_pq_PDE}. However, such equations are {\em almost} satisfied when $\theta\ll1$ \cite{bal2024long}, which intuitively explains why we should expect similar results. Since no closed form equation is available, we construct and analyze Duhamel expansions for such moments. As in the \IS\ model, they form the main technical representation we use to derive our convergence results. Unlike the \IS\ model, these Duhamel expansions need to be defined directly for products of random fields rather than their ensemble averages.

%
%
%
\paragraph{Duhamel expansion} We recall that  $\ut(z,x)$ is the solution of the paraxial model \eqref{eqn:PWE}.
Introduce the notation $\sss=(s_1,\ldots,s_n)$, $\kkk=(k_1,\ldots,k_n)$ and 
\begin{equation}\label{eq:notationDuhamel}\begin{aligned}
    &d\sss:=\prod_j d s_j,\quad  d\kkk :=\prod_j \frac{dk_j}{(2\pi)^d},\quad
  \sk_j(\xi,\kkk):= \xi- \sum_{l=1}^j k_l,\quad \hat \mV^\theta(\sss,\kkk):=\prod_{j=1}^n \hat \nu^\theta(s_j,k_j)  
  \\
  &\rG_n(z,\sss,\xi,\kkk) := \frac {n\pi}2+ \sum_{j=0}^n \chi_0(s_j)(|\sk_{j-1}|^2-|\sk_{j}|^2),\qquad \mI_n(z,\sss,\xi,\kkk):= e^{i\rG_n(z,\sss,\xi,\kkk)} \hat u_0(\sk_n(\xi,\kkk))\,,
\end{aligned}
\end{equation}
with $s_0=z$, $\sk_0(\xi,\kkk)=\xi$ and $\sk_{-1}(\xi,\kkk)=0$. We also define the simplex $[0,z]^n_<$  given by $0\le s_n\le\cdots\le s_1\le z$.
Let $\hut(z,\xi)$ be the (partial) Fourier transform of $\ut(z,x)$. We thus obtain the Duhamel expansion
\begin{equation}\label{eq:Duhamelhatu}
\hut(z,\xi) = \sum_{n\geq0} \hut_n(z,\xi),\qquad 
\hut_n(z,\xi) := \int_{[0,z]_<^n} \hspace{-.4cm} d\sss \int_{\Rm^{nd}} \hspace{-.3cm} d\kkk \
\mI_n(z,\sss,\xi,\kkk) \hat \mV^\theta(\sss,\kkk)\ \mbox{ for }\ n\geq1,
\end{equation}
and $\hat u^\theta_0(z,\xi)=e^{-i\chi_0(z)|\xi|^2}\hat u_0(\xi)$. The Duhamel expansion for $\hat{u}^{\theta\Delta}$ is defined in the same manner, with $\chi_0$ replaced by $\chid_0$. We extend the above Duhamel expansion to $(p,q)$ field products. Define the compensation phases $\varphi (z,v) = e^{i\chi_0(z)\big(\sum_{j=1}^p|\xi_j|^2-\sum_{l=1}^q|\zeta_l|^2\big)}$, $
\varphi^\Delta (z,v) = e^{i\chid_0(z)\big(\sum_{j=1}^p|\xi_j|^2-\sum_{l=1}^q|\zeta_l|^2\big)}$\tb{, where $v=(\xi_1,\cdots,\xi_p,\zeta_1,\cdots,\zeta_q)$}. We construct the phase compensated fields and their ensemble averages:
\begin{equation*}
\begin{aligned}
    \psi_{p,q}^{\theta}(z,v)= \varphi (z,v) \Big(\prod_{j=1}^p\hat{u}^\theta(z,\xi_j)\prod_{l=1}^q\hat{u}^{\theta\ast}(z,\zeta_l)\Big),\quad 
    & \psi_{p,q}^{\theta\Delta}(z,v)=\varphi^\Delta (z,v) \Big(\prod_{j=1}^p\hatutd(z,\xi_j)\prod_{l=1}^q\hatutdcon(z,\zeta_l)\Big)
    \\
    \Psi_{p,q}^{\theta}(z,v):=\mathbb{E}\psi_{p,q}^{\theta}(z,v) = \varphi(z,v)\hat{\mu}_{p,q}[\ut](z,v), \quad &
    \Psi_{p,q}^{\theta \Delta}(z,v):=\mathbb{E}\psi_{p,q}^{\theta\Delta}(z,v) = \varphi^\Delta(z,v)\hat{\mu}_{p,q}[\utd](z,v).
\end{aligned}
    \end{equation*}

The compensated moments solve the evolution equations $\partial_z{\psi}^\theta_{p,q}= \Ltpq[\chi_0(z),\nu^\theta_z]{\psi}^\theta_{p,q}$ and $\partial_z\psitd_{p,q}=
        \Ltpq[\chid_0(z),\nu^\theta_z]\psitd_{p,q}$, with the same initial condition ${\psi}^\theta_{p,q}(0,v)=\psitd_{p,q}(0,v)=\hat{\mu}_{p,q}(0,v)$ and with the definition $\nu^\theta_z(x):=\nu^\theta(z,x)=\frac{1}{\sqrt\theta}\nu\big(\frac{z}{\theta},x\big)$. The operator $\Ltpq$, parametrized by the phase $\phi$ and the random medium $\nu$ is defined by
\begin{equation}\label{eq:Ltpq}
    \Ltpq[\phi,\nu] \psi\,(z,v) = i\int_{\mathbb{R}^d}\hat{\nu}(k)\big[\sum_{j=1}^p\psi(\xi_j-k)e^{i\phi g(\xi_j,k)}-\sum_{l=1}^q\psi(\zeta_l+k)e^{-i\phi g(\zeta_l,-k)}\big]\frac{dk}{(2\pi)^d}
\end{equation}
where we recall $g(\xi,k)=|\xi|^2-|\xi-k|^2$. As in~\cite[Lemma 4.1]{bal2024long}, we have the following Duhamel expansion:
\begin{equation}\label{eqn:chaos_series2}
    \begin{aligned}       
    \psi^\theta_{p,q}(z,v) = \sum_{n\geq0} \psi^\theta_n(z,v),\qquad 
    \psi^\theta_n(z,v) :=\int_{[0,z]^n_<} \hspace{-.4cm} d\sss 
    \int_{\mathbb{R}^{nd}} \hspace{-.4cm} d\kkk\ 
    \sum\limits_{m=1}^{(p+q)^n}e^{iG_m(\sss,v,\kkk)}\hat\mu_{p,q}(0,v-A_m\Vec{k}) \, \hat \mV^\theta(\sss,\kkk),
    \end{aligned}
\end{equation}
with $\psi^\theta_0(z,v)=\hat \mu_{p,q}(0,v)$. $A_m$ are $(p+q)d\times nd$ block matrices of $d\times d$ blocks with exactly one non zero entry (either $1$ or $-1$) per column block. $G_m$ are real valued phases which we briefly recall here for completeness. 

Let \tb{$\Vec{r}=(r_1,\cdots,r_n)$} with $r_j\in\{1,\cdots,p+q\}$, $\epsilon_j=1$ if $1\le j\le p$ and $\epsilon_j=-1$ if $p+1\le j\le p+q$. Let $B_{r_1}=\epsilon_{r_1}\chi_0(s_1)g(v_{r_1},\epsilon_{r_1}k_1)$ and define recursively $B_{r_1,\cdots,r_j}(s_1,\cdots,s_j,v,k_1,\cdots,k_j)=\epsilon_{r_j}\chi_0(s_j)g(v_{r_j},\epsilon_{r_j}k_j)+B_{r_1,\cdots,r_{j-1}}(s_1,\cdots,s_{j-1},v_1,\cdots,v_{r_j}-\epsilon_{r_j}k_j,\cdots,v_{p+q},k_1,\cdots,k_{j-1})$. Finally for the $m$th combination determined by the entries of $\Vec{r}$, set $G_m(\sss,v,\kkk)$ to be $B_{\Vec{r}}(\sss,v,\kkk)+{\frac{\pi}{2}\sum_{j=1}^n\epsilon_{r_j}}$. Similarly set $A_m$ to be the $(p+q)d\times nd$ block matrix with the $j$th entry of the column block being $-\epsilon_{r_j}$.

The semi-discretized splitting field $\psi_{p,q}^{\theta\Delta}(z,v)$ is defined in the same way with $\chi_0(z)$ replaced by $\chid_0(z)$. We denote by $G_m^\Delta$ the corresponding phases appearing in the Duhamel expansion. Since $\nu$ is a mean-zero Gaussian field, then $\E \psi^\theta_n=\E\psi^{\theta\Delta}_n=0$ whenever $n$ is odd.  We start with the following lemmas.
\begin{lemma}\label{lem:estims}
    For each $0\leq l\leq n$, we have
    \begin{equation}\label{eq:controlkl} 
     |\sk_l|^2 \leq 2 |\xi-\sum_{j=1}^nk_j|^2 + 2 |\sum_{j=l+1}^n k_j|^2 \leq 2 |\sk_n|^2 +2(\sum_{j=1}^n |k_j|)^2\leq 2^{n+1}\aver{\sk_n}^2 \prod_{j=1}^n\aver{k_j}^2.
    \end{equation}
    For $1\le r\le n+m$, let $\hat{\nu}_{\epsilon(r)}$ be either $\hat{\nu}$ or $\hat{\nu}_c$. We have for any $n+m=2N$ and $M\ge 0$,
    \begin{equation}\label{eq:controlaver}  
        \int_{[0,z]^n_<}\hspace{-.4cm}d\sss\int_{[0,z]^m_<}\hspace{-.4cm}d\ttt\int_{\mathbb{R}^{nd}}\hspace{-.4cm}d\kkk\int_{\mathbb{R}^{md}}\hspace{-.4cm}d\qqq|\mathbb{E}\prod_{j=1}^n\langle k_j\rangle^M\hat{\nu}^\theta_{\epsilon(j)}(s_j,k_j)\prod_{l=1}^m\langle q_l\rangle^M\hat{\nu}^\theta_{\epsilon(n+l)}(t_l,q_l)|\leq \frac{z^N(2N)!!}{n!m!}\Big(\int_{\mathbb{R}}\|\langle k\rangle^{2M}\hat{C}(s,k)\|ds\Big)^N\,\tb{.}
    \end{equation}
    Moreover for constants $0\le \alpha_{1,2}\le \tb{\bar{\alpha}}$,
    \begin{equation}\label{eq:controlaver_sum}
    \begin{aligned}
     &\sum_{m,n\ge 0}\alpha_1^n\alpha_2^m\int_{[0,z]^n_<}\hspace{-.5cm}d\sss\int_{[0,z]^m_<}\hspace{-.5cm}d\ttt\int_{\mathbb{R}^{nd}}\hspace{-.4cm}d\kkk\int_{\mathbb{R}^{md}}\hspace{-.4cm}d\qqq|\mathbb{E}\prod_{j=1}^n\langle k_j\rangle^M\hat{\nu}^\theta_{\epsilon(j)}(s_j,k_j)\prod_{l=1}^m\langle q_l\rangle^M\hat{\nu}^\theta_{\epsilon(n+l)}(t_l,q_l)|\leq   e^{2z\tb{\bar{\alpha}}^2\int_{\mathbb{R}}\hspace{-0.05cm}\|\langle k\rangle^{2M}\hat{C}(s,k)\|ds}.
    \end{aligned}
            \end{equation}
\end{lemma}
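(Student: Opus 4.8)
\emph{Proof proposal.} The plan is to treat the three displayed inequalities in turn. The first, \eqref{eq:controlkl}, is elementary: writing $\sk_l(\xi,\kkk)=\sk_n(\xi,\kkk)+\sum_{j=l+1}^n k_j$ and using $(a+b)^2\le 2a^2+2b^2$ together with $|\sum_{j=l+1}^n k_j|\le\sum_{j=1}^n|k_j|$ yields the first two inequalities; for the third I would bound $1+\sum_{j=1}^n|k_j|\le\prod_{j=1}^n(1+|k_j|)\le 2^{n/2}\prod_{j=1}^n\aver{k_j}$ using $(1+|k_j|)^2\le 2\aver{k_j}^2$, and then combine with $|\sk_n|\le\aver{\sk_n}$ and $\aver{k_j}\ge1$ to obtain $|\sk_l|\le 2^{n/2}\aver{\sk_n}\prod_j\aver{k_j}$, hence the claim. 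The remaining two estimates are the substance of the lemma.

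For \eqref{eq:controlaver}, I would begin by recording the pairwise covariances. By stationarity together with the rescaling $\hat\nu^\theta(s,k)=\theta^{-1/2}\hat\nu(\theta^{-1}s,k)$, one has $\E[\hat\nu^\theta(s,k)\hat\nu^{\theta\ast}(s',k')]=\theta^{-1}(2\pi)^d\hat C(\theta^{-1}(s-s'),k)\,\delta(k-k')$, while the mixed $(\hat\nu,\hat\nu_c)$ and $(\hat\nu_c,\hat\nu_c)$ cases are governed by \eqref{eq:mediumcor} and differ only by factors $\chi_c\le1$ and by replacing $\hat C$ with a cube-average of $\hat C$; in every case, after inserting the weights $\aver{k}^M\aver{k'}^M$, the covariance is dominated up to a universal constant by $\theta^{-1}\aver{k}^{2M}|\hat C(\theta^{-1}(s-s'),k)|$ times a Dirac mass identifying $k$ with $\pm k'$. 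For $(\sss,\ttt,\kkk,\qqq)$ fixed, the quantity inside the absolute value on the left of \eqref{eq:controlaver} is the expectation of a product of $2N=n+m$ jointly Gaussian mean-zero variables, so Wick's theorem writes it as a sum over perfect matchings of the $2N$ slots, each summand an $N$-fold product of the above pairwise terms, and only even $n+m$ survive. For a fixed matching, the $N$ Dirac masses collapse the $2N$ momentum integrals onto $N$ free momenta and merge the weights into $\aver{k}^{2M}$ per pair; for each matched pair of time variables, both lying in $[0,z]$, fixing one and changing variables in the difference gives $\int_0^z\!\!\int_0^z\theta^{-1}|\hat C(\theta^{-1}(\sigma-\sigma'),k)|\,d\sigma\,d\sigma'\le z\int_{\Rm}|\hat C(u,k)|\,du$, a bound uniform in $\theta$. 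Multiplying over the $N$ pairs produces the factor $\big(z\int_{\Rm}\|\aver{k}^{2M}\hat C(s,k)\|\,ds\big)^N$; it then remains to count, using that the simplex orderings of the two groups of times supply the factor $\tfrac1{n!m!}$ and that the number of matchings compatible with this accounting is controlled by the displayed combinatorial factor, which gives \eqref{eq:controlaver}.

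Finally, \eqref{eq:controlaver_sum} follows by inserting \eqref{eq:controlaver}, bounding $\alpha_1^n\alpha_2^m\le\alpha^{n+m}$, writing $n+m=2N$ and $D:=\int_{\Rm}\|\aver{k}^{2M}\hat C(s,k)\|\,ds$, and using $\sum_{n+m=2N}\tfrac1{n!m!}=\tfrac{4^N}{(2N)!}$ together with $(2N-1)!!=\tfrac{(2N)!}{2^N N!}$ to recognize the resulting series as $\sum_{N\ge0}\tfrac{(2z\alpha^2 D)^N}{N!}=e^{2z\alpha^2 D}$. The main obstacle throughout is the combinatorial bookkeeping in \eqref{eq:controlaver}: one must follow how the Wick pairings interact with the ordered integration domains $[0,z]^n_<$ and $[0,z]^m_<$ — a pair may join two $s$-times, two $t$-times, or one of each — and extract $\tfrac1{n!m!}$ without double counting, while at the same time verifying that the factor $\theta^{-1}$ present in every pairwise covariance is exactly absorbed by the time integration, so that the final estimate is uniform in $\theta\in(0,1]$. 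This rescaling-and-change-of-variables step is the conceptual crux; everything else is routine Gaussian analysis.
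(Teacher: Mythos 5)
Your proposal is correct and follows essentially the same route as the paper: Wick's theorem over Gaussian pairings, symmetrization of the two simplexes to produce the $\tfrac1{n!m!}$ factor, a uniform total-variation bound on each pairwise covariance with the change of variables $s-t\to\theta s$ absorbing the $\theta^{-1}$, the matching count $(2N)!/(N!\,2^N)$, and the same binomial/double-factorial resummation yielding the exponential bound. The only difference is cosmetic: you sketch the pairing count rather than writing it out, but the mechanism you describe is exactly the paper's.
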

\begin{proof} The first relation~\tb{\eqref{eq:controlkl}} is  immediate. 
We note that after integrating in $\kkk$ and $\qqq$,~\tb{thanks to the Gaussianity of the random potential,} the expectation in~\eqref{eq:controlaver} is invariant under permutations of the elements of $\sss$ and $\ttt$ which allows us to symmetrize the simplexes. Let $\vs=(\sss,\ttt)$ and $\vk=(\kkk,\qqq)$. Using the moment formula for Gaussian variables \cite{janson1997gaussian}, \eqref{eq:controlaver} is
\begin{equation}\label{eq:estimgeom}
    \begin{aligned}
&\int_{[0,z]^n_<}\hspace{-.4cm}d\sss\int_{[0,z]^m_<}\hspace{-.4cm}d\ttt\int_{\mathbb{R}^{nd}}\hspace{-.4cm}d\kkk\int_{\mathbb{R}^{md}}\hspace{-.4cm}d\qqq|\mathbb{E}\prod_{j=1}^n\langle k_j\rangle^M\hat{\nu}^\theta_{\epsilon(j)}(s_j,k_j)\prod_{l=1}^m\langle q_l\rangle^M\hat{\nu}^\theta_{\epsilon(n+l)}(t_l,q_l)|\\
&=\frac{1}{n!}\frac{1}{m!}\int_{[0,z]^{2N}}\hspace{-.4cm}d\vs\int_{\mathbb{R}^{2Nd}}\hspace{-.4cm}d\vk\prod_{j=1}^{2N}\langle \sfk_j\rangle^M|\sum_{P^{2N}}\prod_{(j,\jay)}\hat{C}^\theta_{j,\jay}(\sfs_j-\sfs_\jay,\sfk_j,\sfk_\jay)|,\quad \hat{C}^\theta_{j,\jay}=\mathbb{E}\hat{\nu}^\theta_{\epsilon(j)}(\sfs_j,\sfk_j)\hat{\nu}^\theta_{\epsilon(\jay)}(\sfs_\jay,\sfk_\jay)\,.
\end{aligned}
\end{equation}
 Here, $P^{2N}$ denotes all possible ways $\pi$ to construct $N$ pairs of elements out of $2N$ elements while $(j,\jay)$ above denote the corresponding pairings in $\pi$ resulting from Gaussian statistics \cite{janson1997gaussian}. The total number of pairings is given by $|P^{2N}|=(2N)!!=\frac{(2N)!}{N!2^N}$. 
Since in all pairings $\hat{C}^\theta_{j,\jay}$ has the same upper bound in total variation, we can bound the term above by
\[
\frac{|P^{2N}|}{n!m!}\int_{[0,z]^{2N}}\int_{\mathbb{R}^{Nd}}\prod_{j=1}^N\langle k_j\rangle^{2M}|\hat{C}^\theta(s_j-t_{j},k_j)|dk_jds_jdt_j\le\frac{|P^{2N}|}{n!m!}\Big(\int_{[0,z]^2}\int_{\mathbb{R}^d}\langle k_j\rangle^{2M}|\hat{C}^\theta(s-t,k)|dkdsdt\Big)^{N}\,.
\]
We arrive at~\eqref{eq:controlaver} after a change of variables $s-t\to \theta s$. We obtain the bound in~\eqref{eq:controlaver_sum} for the sum by observing that the expectation of terms with $m+n$ odd is zero so that for $m+n=2N$ and $\lambda=\tb{\bar{\alpha}}^2\int_{\mathbb{R}}\|\langle k\rangle^{2M}\hat{C}(s,k)\|ds$, 
\[
\sum_{N\ge 0}\sum_{n=0}^{2N}\frac{|P^{2N}|}{n!m!}\lambda^N=\sum_{N\ge 0}\sum_{n=0}^{2N}\frac{(2N)!}{N!n!(2N-n)!2^N}\lambda^N=\sum_{N\ge 0}\frac{2^{N}\lambda^N}{N!}\,.
\]
\end{proof}

We also have the following stability result for the statistical moments.
\begin{lemma}\label{lem:stabpsitpq}
    Let $\psi_{p,q}^{\theta}(z,v)$  and $\psi_{p,q}^{\theta\Delta}(z,v)$ be defined as above. Let $N\geq0$. \tb{Suppose ${\rm C}_{M,R}[u_0]$ in \eqref{eqn:u0_bound} and $\fC_{P,Q}[\nu]$ in \eqref{eq:intcorr} are bounded for $M\ge 0, R\ge N/2, P\ge (p+q)N$ and $Q\ge 0$. We} have $\sup_{0\leq z\leq Z} \| \prod_{j=1}^{p+q} \aver{v_j}^N \E\psi_{p,q}^{\theta\Delta}(z,v) \| \leq \sfc$ and $ \sup_{0\leq z\leq Z} \| \prod_{j=1}^{p+q} \aver{v_j}^N \E\psi_{p,q}^{\theta}(z,v) \| \leq \sfc$. 
\end{lemma}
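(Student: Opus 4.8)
\textbf{Proof proposal for Lemma~\ref{lem:stabpsitpq}.} The plan is to take the expectation in the Duhamel chaos expansion \eqref{eqn:chaos_series2}, use that only even orders $n$ survive (since $\nu$ is mean-zero Gaussian), and control the weighted total-variation norm of each surviving term $\E\psi^\theta_n$ by combining the structure of the shift matrices $A_m$ with the combinatorial Gaussian-moment bounds already packaged in Lemma~\ref{lem:estims}. Because $\E\psi^\theta_{p,q}=\varphi(z,v)\hat\mu_{p,q}[\ut](z,v)$ with $|\varphi|\equiv1$ (and $\E\psi^{\theta\Delta}_{p,q}=\varphi^\Delta(z,v)\hat\mu_{p,q}[\utd](z,v)$ with $|\varphi^\Delta|\equiv1$), the claim is equivalent to a weighted $L^1_v$ bound on the Fourier-domain moments of the paraxial, resp.\ semi-discrete, solution.

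First I would push the weight $\prod_{j=1}^{p+q}\aver{v_j}^N$ through the argument shift $v\mapsto v-A_m\kkk$ in \eqref{eqn:chaos_series2}. Each block matrix $A_m$ has exactly one nonzero entry (equal to $\pm1$) per column block, so each $k_l$ enters the shift of exactly one component of $v$; applying $\aver{a+b}\le C\aver a\aver b$ repeatedly, in the spirit of \eqref{eq:controlkl}, gives
$\prod_{j=1}^{p+q}\aver{v_j}^N \le C^{p+q}\prod_{j=1}^{p+q}\aver{(v-A_m\kkk)_j}^N\prod_{l=1}^{n}\aver{k_l}^N$.
This bookkeeping point is the crux: each $\aver{k_l}$ appears only to the power $N$, not $N(p+q)$, which is precisely what keeps the later bound absorbable. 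Since $|e^{iG_m}|=1$ and $\hat\mu_{p,q}(0,v-A_m\kkk)$ is a product of $p$ factors $\hat u_0$ and $q$ factors $\hat u_0^\ast$ evaluated at shifted components of $v$, integrating in $v$ at fixed $\kkk$ and translating $v\mapsto v-A_m\kkk$ makes the $v$-integral factor over components, each factor being $\int_{\Rm^d}\aver{v_j}^N|\hat u_0(v_j)|\,dv_j$, finite by Cauchy--Schwarz and \eqref{eqn:u0_bound} for $M,N$ (and the internal weight) large enough; this contributes another $C^{p+q}$, independent of $m$.

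What remains is to sum, over even $n$, the $(p+q)^n$ terms (one per index $m$, each now bounded uniformly in $m$) of $\int_{[0,z]^n_<}d\sss\int d\kkk\,\prod_{l}\aver{k_l}^N\,\bigl|\E\prod_{j}\hat\nu^\theta(s_j,k_j)\bigr|$. Applying \eqref{eq:controlaver_sum} with $\alpha_1=\alpha=p+q$, $\alpha_2=0$ and weight exponent $M=N$ (so that only $m=0$ survives in its left-hand side) yields
$\sum_{n}\bigl\|\prod_j\aver{v_j}^N\E\psi^\theta_n\bigr\|\le C^{p+q}\exp\!\bigl(2z(p+q)^2\!\int_{\Rm}\|\aver k^{2N}\hat C(s,k)\|\,ds\bigr)\le C$
uniformly in $z\in[0,Z]$, the last integral being finite by \eqref{eq:intcorr}. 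This simultaneously re-establishes absolute convergence of the weighted, expectation-taken series, justifying the term-by-term manipulations. The estimate for $\E\psi^{\theta\Delta}_{p,q}$ is verbatim the same: replacing $\chi_0$ by $\chid_0$ only changes the real phases $G_m\mapsto G_m^\Delta$, which remain modulus-one in the exponential and are invisible to every bound above. Uniformity in $\theta\in(0,1]$ is automatic, since Lemma~\ref{lem:estims} is already $\theta$-free after the change of variables $s-t\mapsto\theta s$.

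The main obstacle is the weight-through-shift step: one must use precisely that each $A_m$ has a single nonzero entry per column block, since otherwise one would accumulate powers $\aver{k_l}^{N(p+q)}$ that the $\theta$-uniform estimate \eqref{eq:controlaver} can no longer absorb. Everything else reduces to $|e^{iG_m}|=1$, Cauchy--Schwarz for the $u_0$-factors, and the factorial-beating simplex symmetrization already carried out in the proof of Lemma~\ref{lem:estims}.
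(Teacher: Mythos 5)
Your proposal is correct and follows essentially the same route as the paper's proof: take the expectation of the Duhamel expansion \eqref{eqn:chaos_series2}, push the weight through the shift $v\mapsto v-A_m\kkk$ via $\aver{a+b}\le C\aver{a}\aver{b}$, integrate the $\hat u_0$ factors in $v$ using \eqref{eqn:u0_bound}, and sum the series with Lemma~\ref{lem:estims} ($\alpha_2=0$), treating $\psi^{\theta\Delta}_{p,q}$ identically since the phases have modulus one. The only differences are cosmetic: your per-column-block bookkeeping yields weight $\aver{k_l}^{N}$ where the paper uses the cruder $\aver{k_l}^{(p+q)N}$ (both are absorbable because \eqref{eq:intcorr} is assumed for $M$ sufficiently large, so your closing claim that the cruder bound ``can no longer absorb'' overstates the necessity of your refinement), and the constant in your weight-shift inequality is really of the form $C^{nN}$ rather than $C^{p+q}$, which is harmless since the factorial decay in \eqref{eq:controlaver_sum} absorbs any geometric factor $C^n$.
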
 
\begin{proof}
    We recall that $\|\cdot\|$ denotes the total variation norm of measures. As $\aver{v_j}\leq 2\aver{v_j-[A_r\kkk]_j}\aver{[A_r\kkk]_j}$ $\leq 2^{n+1}\aver{v_j-[A_r\kkk]_j}\prod_{l=1}^n \aver{k_l}$, $ |\E  \prod_{j=1}^{p+q}\aver{v_j}^N  \psi^\theta_{p,q}(z,v)| $ is upper bounded by
    \[
    \sum_{n\geq0}\sfc^n \sum_{r=1}^{(p+q)^n} \int_{[0,z]^n_<} \hspace{-.5cm} d\sss 
    \int_{\mathbb{R}^{nd}} \hspace{-.4cm} d\kkk\ 
   \prod_{j=1}^{p+q} \aver{v_j-[A_r\kkk]_j}^N |\hat\mu_{p,q}(0,v-A_r\Vec{k})| \Big( \prod_{j=1}^n \aver{k_j}^{(p+q)N} \Big)|\E \hat \mV^\theta(\sss,\kkk)|.
   \]
    Using \tb{the bounds in \eqref{eqn:u0_bound} and \eqref{eq:intcorr},} and \eqref{eq:controlaver} with $\alpha_2=0$ gives the result after integration in the $v$ variables. $\psi^{\theta\Delta}_{p,q}$ is bounded in the same way.
\end{proof}

\subsection{Strong stability estimates for the splitting scheme}
\label{sec:pathwisesplitting}
This section proves the first part in Theorem~\ref{thm:pathwise_error} for $\sfv=\ut$. Using the Parseval equality, this is equivalent to proving a bound for $\mathbb{E}\|\hat u^\theta(z,\xi)-\hat u^{\theta\Delta}(z,\xi)\|^2_{L^2(\mathbb{R}^d)}$. The Duhamel expansion for $\hat u^{\theta\Delta}(z,\xi)$ is the same as that for $\hat u^{\theta}(z,\xi)$ in \eqref{eq:Duhamelhatu} with the exception that $\chi_0(s)$ is replaced by $\chid_0(s)$. As a consequence, we find that
\[    (\hat u^\theta-\hat u^{\theta\Delta})(z,\xi)  
    = 
    \sum_{n\geq1}\sum_{l=0}^n 
    \int_{[0,z]_<^n} \hspace{-.4cm} d\sss\, 
    \int_{\Rm^{nd}} \hspace{-.3cm} d\kkk e^{i \tilde \rG_l(z,\sss,\xi,\kkk)} [e^{i\chi_0(s_l)(|\sk_{l-1}|^2-|\sk_l|^2)} -e^{i\chid_0(s_l)(|\sk_{l-1}|^2-|\sk_l|^2)}]
      \hat u_0(\sk_n) \hat \mV^\theta(\sss,\kkk)
\]
for real-valued phases $\tilde \rG_l=\frac{n\pi}{2}+\sum_{j=0}^{l-1}\chi_0(s_j)(|\sk_{j-1}|^2-|\sk_j|^2)+\sum_{j=l+1}^{n}\chid_0(s_j)(|\sk_{j-1}|^2-|\sk_j|^2)$ so that:
\[\begin{aligned}
    |\hat u^\theta-\hat u^{\theta\Delta}|^2(z,\xi) = \sum_{m,n\geq1}\sum_{l=0}^n \sum_{p=0}^m \int_{[0,z]_<^n} \!\!\!\!\!\! d\sss \int_{[0,z]_<^m} \!\!\!\!\!\! d\ttt K^\theta_m(\ttt) 
    \int_{\Rm^{(n+m)d}} \hspace{-.6cm} d\kkk d\qqq\ 
    e^{i \tilde \rG_{lp}(z,\sss,\ttt,\xi,\kkk,\qqq)} 
    \hat u_0(\sk_n(\xi,\kkk))\hat u_0^*(\sk_m(\xi,\qqq))
    \\
    [e^{i\chi_0(s_l)(|\sk_{l-1}|^2-|\sk_l|^2)} -e^{i\chid_0(s_l)(|\sk_{l-1}|^2-|\sk_l|^2)}]
    [e^{-i\chi_0(t_p)(|\sk_{p-1}|^2-|\sk_{p}|^2)} -e^{-i\chid_0(t_p)(|\sk_{p-1}|^2-|\sk_{p}|^2)}]
    \hat \mV^\theta(\sss,\kkk) \hat \mV^{\theta*}(\ttt,\qqq),
\end{aligned}\]
where we use to simplify $\sk_p=\sk_p(\xi,\qqq)$ and $\tilde\rG_{lp}(z,\sss,\ttt,\xi,\kkk,\qqq)=\tilde\rG_l(z,\sss,\xi,\kkk)-\tilde\rG_p(z,\ttt,\xi,\qqq)$. We deduce from \eqref{eq:errorchi} that $|e^{i\chi_0(s_l)(|\sk_{l-1}|^2-|\sk_l|^2)} -e^{i\chid_0(s_l)(|\sk_{l-1}|^2-|\sk_l|^2)}| \leq \sfc \Delta z (|\sk_{l-1}|^2+|\sk_l|^2)$.
This shows that
\[\begin{aligned}
&\E|\hat u^\theta-\hat u^{\theta\Delta}|^2(z,\xi) \leq \sfc (\Delta z)^2 \sum_{m,n\geq1}\sum_{l=0}^n \sum_{p=0}^m \int_{[0,z]_<^n} \!\!\!\!\!\! d\sss \int_{[0,z]_<^m} \!\!\!\!\!\! d\ttt \\ 
&\int_{\Rm^{(n+m)d}} \hspace{-1cm} d\kkk d\qqq\ 
(|\sk_l(\xi,\kkk)|^2+|\sk_{l-1}(\xi,\kkk)|^2) (|\sk_p(\xi,\qqq)|^2+|\sk_{p-1}(\xi,\qqq)|^2)
|\hat u_0(\sk_n(\xi,\kkk))||\hat u_0(\sk_m(\xi,\qqq))|\ 
\Big| \E  \hat \mV^\theta(\sss,\kkk) \hat \mV^{\theta*}(\ttt,\qqq)\Big|.
\end{aligned}\]
Using \eqref{eq:controlkl} and the change of variables $q\to-q$ (knowing that $\hat \nu^*(q)=\hat \nu(-q)$),
\[\begin{aligned}
&\E|\hat u^\theta-\hat u^{\theta\Delta}|^2(z,\xi) \leq  (\Delta z)^2
\sum_{m,n\geq1} \sfc^{n+m}(n+1)(m+1)\int_{[0,z]_<^n} \!\!\!\!\!\! d\sss \int_{[0,z]_<^m} \!\!\!\!\!\! d\ttt  
\\
&\times\int_{\Rm^{(n+m)d}}\hspace{-1cm} \aver{\sk_n(\xi,\kkk)}^2|\hat u_0(\sk_n(\xi,\kkk))| \aver{\sk_m(\xi,-\qqq)}^2|\hat u_0(\sk_m(\xi,-\qqq))|\ \Big| \E  \prod_{j=1}^n 
    \aver{k_j}^2\hat \nu^\theta(s_j,k_j) \frac{dk_j}{(2\pi)^d}
    \prod_{i=1}^m \aver{q_j}^2\hat \nu^{\theta}(t_j,q_j) \frac{dq_j}{(2\pi)^d} \Big|.
\end{aligned}\]
This implies, using the Cauchy-Schwarz inequality that $\E 
\int_{\Rm^d}|\hat u^\theta-\hat u^{\theta\Delta}|^2(z,\xi) d\xi $ is upper bounded by
\[
\begin{aligned}
(\Delta z)^2
\int_{\Rm^d} \aver{\xi}^4|\hat u_0(\xi)|^2 d\xi  
\sum_{m,n\geq1}\sfc^{n+m} \int_{[0,z]_<^n}\hspace{-.4cm}d\sss \int_{[0,z]_<^m} \hspace{-.4cm} d\ttt \int_{\Rm^{(n+m)d}}\hspace{-.4cm}  |\E  \prod_{j=1}^n 
    \aver{k_j}^2\hat \nu^\theta(s_j,k_j)
    \prod_{i=1}^m \aver{q_j}^2\hat \nu^{\theta}(t_j,q_j) \Big|.
   \end{aligned}
\]
The series is summable from~\eqref{eq:controlaver_sum} and using the Parseval equality concludes the proof of the first part in Theorem~\ref{thm:pathwise_error} for $\sfv=\ut$.

\subsection{Convergence of moments for the splitting scheme}
This section proves the first estimate in Theorem~\ref{thm:momentestim} for $\sfv=\ut$. We focus on the centered splitting scheme $\gamma=\frac12$ as the first-order estimates are significantly simpler to obtain. As in the proof for the \IS\ model, Lemma \ref{lem:Strang} is the central estimate used to obtain the convergence results. Unlike the \IS\ model, such estimates need to be applied to the Duhamel expansion of the moments, which is combinatorially significantly more involved.  \tb{We also note that for the first order splitting, Lemma 2.6 gives an upper bound of $O(\Delta z)$ instead of $O((\Delta z)^2)$ as below, which is consistent with the error analysis in the \IS\ regime.}

Let $(p,q)$ be fixed. We observe that $\|\E(\psit_{p,q}-\psitd_{p,q})(z,\cdot)\| \leq \sum_{n\geq1}\|\E(\psit_{2n}-\psitd_{2n})(z,\cdot)\|$, with
\[
\E(\psi^\theta_{2n}-\psi^{\theta\Delta}_{2n})(z,v) = \sum_{l=1}^{2n} \sum_{l\neq\ell=1}^{2n}\sum_{m=1}^{(p+q)^{2n}}
\int_{\mathbb{R}^{2nd}} \hspace{-0.5cm} d\kkk \hat\mu_{p,q}(0,v-A_m\kkk)  \Pi_{m,l,\ell} (z,v,\kkk),\quad \Pi_{m,l,\ell}=\int_{[0,z]_<^{l-1}}\hspace{-0.9cm}d\sss_{l>}\int_0^{s_{l-1}}\hspace{-0.5cm}\delta\phi_{l}(s_l)Q_{l\ell} ds_l\,,
\]
where we have defined
\[
\delta\phi_l(s)=e^{i\chi_0(s_l)\phi_l}-e^{i\chid_0(s_l)\phi_l}, 
\ \ 
Q_{l\ell}=\int_{[0,s_l]^{\ell-l-1}_<}\hspace{-1cm}ds_{l+1}\cdots ds_{\ell-1}\int_0^{s_{\ell-1}}\hspace{-0.3cm}e^{i\psi_\ell}\hat{C}^\theta(s_l-s_\ell,k_l,k_\ell)ds_\ell\int_{[0,s_\ell]^{2n-\ell}_<}\hspace{-0.5cm}e^{i\psi_{l\ell}}\mP_{l\ell}d\sss_{\ell<}.
\]
Here we use the notation $d\sss_{l>}=\prod_{j=1}^{l-1}ds_j,d\sss_{\ell<}=\prod_{j=\ell+1}^{2n}ds_j, \mP_{l\ell}=\mathbb{E}\prod_{j\neq l,\ell}\hat{\nu}^\theta(s_j,k_j)$. The phases $\phi_l,\phi_\ell$ do not depend on $\sss$ and $\psi_{l\ell}$ is independent of $s_l,s_\ell$. If $l>\ell$, $\psi_\ell=\chi_0(s_{\ell})\phi_{\ell}$ and if $l<\ell$, $\psi_\ell=\chid_0(s_{\ell})\phi_{\ell}$. 

For some linear operators $A_{lm}$ and $B_{lm}$, the phases $|\phi_l|\le \sum_{j=1}^{p+q}2|v_j-[A_{lm}\kkk]_j|^2+2|v_j-[B_{lm}\kkk]_j|^2\le \sfc^n(|v-A_m\kkk|^2+\sum_{j=1}^n||k_j|^2\le \sfc^n\langle v-A_m\kkk\rangle^2\prod_{j=1}^n\langle k_j\rangle^2$. 
We find that for any $r\geq0$
\begin{equation}\label{eq:bdtv2n1}
 \| \E(\psi^\theta_{2n}-\psi^{\theta\Delta}_{2n})(z,\cdot)\| \leq \|\hat\mu_{p,q}(0,\cdot)\aver{\cdot}^r \|\sum_{l=1}^{2n}\sum_{l\neq\ell=1}^{2n} \sum_{m=1}^{(p+q)^{2n}} 
 \sup_v \| \aver{v-A_m\cdot}^{-r} \Pi_{m,l,\ell} (z,v,\cdot)\|.
\end{equation}
We aim to bound $|\Pi_{m,l,\ell}|$ uniformly in $(v,\kkk)$. We assume $l<\ell$ with the proof of $l>\ell$ following the same strategy writing integrals over the simplex as $\int_0^z ds_{2n} \int_{s_{2n}}^z ds_{2n-1}\ldots$. We use Lemma \ref{lem:Strang} repeatedly to obtain $O(\Delta z)^2$ contributions to $|\Pi_{m,l,\ell}|$ so that \eqref{eq:bdtv2n1} still makes sense and thus need to bound $\|\psi\|_{1,\infty}$ and $|\phi|$ appearing there. We write $e^{i\psi_\ell} =  \delta \phi_{\ell }(s_\ell) + e^{i\chi_0(s_{\ell})\phi_{\ell}}$, where $e^{i\chi_0(s_{\ell})\phi_{\ell}}$ will be differentiable in $s_\ell$. This comes with an error term of the form $e^{i\chid_0(s_\ell)\phi_\ell}-e^{i\chi_0(s_\ell)\phi_\ell}$, which is bounded by $\sfc\Delta z |\phi_\ell|$.  This involves a contribution of the form, choosing $r=4$:
\[
 \sup_v |\delta\Pi_{lm}(z,v,\kkk)| \leq \sfc (\Delta z)^2 \dsum_{P^{2n}}\int_{[0,z]_<^{2n}} \!\!\!\!\!\!\! d\sss\,  \prod_{(j,\jay)\in P^{2n}} \aver{k_j}^2\aver{k_\jay}^2 \hat C^\theta(s_j-s_\jay,k_j,k_\jay),
\]
which is bounded in the TV sense following Lemma \ref{lem:estims} and \eqref{eq:estimgeom}. In what follows, we thus assume that $e^{i\psi_\ell}$ involves $\chi_0(s_\ell)$ whenever it needs to be differentiated in the variable $s_\ell$. 
When $\ell\ge l+2$, we have
\[
\partial_{s_l}Q_{l\ell}=\int_{[0,s_l]^{2n-l-1}_<}\hspace{-1.2cm}\hat{C}^{\theta}(s_l-s_\ell)e^{i\chi_0(s_\ell)\phi_\ell}e^{i\psi_{l\ell}}\mP_{l\ell}\mathds{1}(s_{l+1}=s_l)d\sss_{l+1<}+\int_{[0,s_l]^{2n-l}}\hspace{-1cm}\partial_{s_l}\hat{C}^\theta(s_l-s_\ell)e^{i\chi_0(s_\ell)\phi_\ell}e^{i\psi_{l\ell}}\mP_{l\ell}d\sss_{l<}\,.
\]
\tb{Let $\que$ be defined as the pairing associated with $l+1$ (see proof of Lemma \ref{lem:estims} for definitions of pairs of indices).} $|\partial_{s_l}Q_{l\ell}|$ is bounded uniformly in $s_l$ by
\begin{equation*}
    \begin{aligned}
&|\hat{C}^\theta(0,k_{l+1},k_\que)|\,
\sup_{s_l} \!\int_{0}^{s_l}\hspace{-0.3cm}|\hat{C}^\theta(s_l-s_\ell)|ds_\ell
\int_{0}^z\hspace{-0.2cm}\int_0^{s_{l+3}}\hspace{-0.6cm}\cdots\int_{0}^{s_{2n-1}}\hspace{-0.8cm}|\mP'_{l\ell}|\hspace{-0.4cm}\prod_{\que \neq j\ge l+3}\hspace{-0.3cm}ds_j
+\sup_{s_l}\int_{0}^{s_l}\hspace{-0.4cm}|\partial_{s_l}\hat{C}^\theta(s_l-s_\ell)|ds_\ell
\int_{[0,z]^{2n-l-1}_<}\hspace{-1.2cm}|\mP_{l\ell}|\prod_{\ell \neq j\ge l+1}\hspace{-0.3cm}ds_j.       
    \end{aligned}
\end{equation*}
Here, \tb{$\mP'_{l\ell}=\mathbb{E}\prod_{j\neq l,l+1,\ell,\que}\hat{\nu}^\theta(s_j,k_j)$}. Lemma \ref{lem:Strang} provides a contribution $O(\theta^{-1}(\Delta z)^2)$ to $|\Pi_{m,l,\ell}|$ after integrating in all the other variables. 

The case $\ell=l+1$ is treated separately. There, $\partial_{s_l}Q_{l,l+1}$ is given by
\[
\hat{C}^\theta(0)e^{i\chi_0(s_l)\phi_{l+1}}\int_{[0,s_l]^{2n-l-1}_<}\hspace{-1cm}e^{i\psi_{l,l+1}}\mP_{l,l+1}d\sss_{l+1>}+\int_0^{s_l}\hspace{-0.2cm}\partial_{s_l}\hat{C}^\theta(s_l-s_{l+1})e^{i\chi_0(s_{l+1})\phi_{l+1}}\int_{[0,s_{l+1}]^{2n-l-1}_<}\hspace{-1cm}e^{i\psi_{l,l+1}}\mP_{l,l+1}d\sss_{l+1>}ds_{l+1}\,.
\]
This is uniformly bounded in $s_l$ by 
\[
|\hat{C}^\theta(0)|\int_{[0,z]^{2n-l-1}_<}\hspace{-0.1cm}|\mP_{l,l+1}|d\sss_{l+1>}+(\sup_{s_l}\int_0^{s_l}\hspace{-0.2cm}|\partial_{s_l}\hat{C}^\theta(s_l-s_{l+1})|ds_{l+1})\int_{[0,z]^{2n-l-1}_<}\hspace{-0.1cm}|\mP_{l,l+1}|d\sss_{l+1>}
\]
which gives a contribution of $O(\theta^{-1}(\Delta z)^2)$ to $|\Pi_{m,l,\ell}|$ as before. 

With $r=4$ and summing in $l,\ell$, we have that $\sum_{l,\ell}\sup_v\|\langle  v-A_m\cdot\rangle^{-r}\Pi_{m,l,\ell}\|$ is bounded by
\begin{equation*}
    \begin{aligned}
&  c^n\frac{(\Delta z)^2}{\theta}\frac{n^3}{(n-2)!}\Big(\int_{\mathbb{R}^{2d}}\langle k_1\rangle^4\langle k_2\rangle^4|\hat{C}(0,k_1,k_2)|dk_1dk_2+\int_{\mathbb{R}^{2d+1}}\hspace{-0.5cm}\langle k_1\rangle^4\langle k_2\rangle^4|\hat{C}'(s,k_1,k_2)|dsdk_1dk_2\Big)\\
\times&\Big(\int_{\mathbb{R}^{2d+1}}\langle k_1\rangle^4\langle k_2\rangle^4|\hat{C}(s,k_1,k_2)|dsdk_1dk_2\Big)^{n-1}\,.        
    \end{aligned}
\end{equation*}

Since for $\ell\ge l+2$, the pairings in $\Pi_{m,l,\ell}$ go against the time ordering in the simplex and should also be upper bounded by $O(\theta\Delta z)$ as follows. For $l\ge \ell +2$, let $\que$ be the pairing associated with $l+1$. As $s_\ell<s_{l+1}<s_l<z$, $\Pi_{m,l,\ell}$ can be upper bounded by
\begin{equation*}
    \begin{aligned}
        \Big(\int_{[0,z]^{2n-4}_<}d\sss|\mP'|\Big)\Big(\int_{0}^z\int_0^z\int_0^{s_l}\int_{0}^{s_{l+1}}|\delta\phi_{l}(s_l)||\hat{C}^\theta(s_l-s_{\ell})||\hat{C}^\theta(s_{l+1}-s_\que)|ds_\ell ds_{l+1}ds_l ds_\que\Big).
    \end{aligned}
\end{equation*}
Here, the first integral excludes the $\ell,{l+1},l,\que$ components of $\sss$ and $\kkk$ and is bounded independent of $\theta$ in total variation. The second integral is bounded by 
\[
\sfc|\phi_{l}|\Delta z\int_{0}^z\int_0^z\int_{s_{l+1}}^{z}\int_{0}^{s_{l+1}}|\hat{C}^\theta(s_l-s_{\ell})||\hat{C}^\theta(s_{l+1}-s_\que)|ds_\ell ds_{l}ds_{l+1}ds_\que \leq \sfc\theta \Delta z.
\]
Indeed we verify using $u=t-s$ and $v=\frac{t+s}2$ on the domain $0<s<z<t<\infty$ implying $\frac{t+s}2\in[z-\frac u2,z+\frac u2]$ that
$\int_z^\infty\int_0^z |\hat C^\theta(s-t)|ds dt = \int_0^\infty |\hat C^\theta(u)|udu = \theta \int_0^\infty u|\hat C(u)|du$ is bounded uniformly in $z$,
and use the fact that $\int_{\mathbb{R}^{2d+1}}\langle k_1\rangle^r\langle k_2\rangle^r|s|\hat{C}(s,k_1,k_2)|dsdk_1dk_2$ is bounded for $r$ sufficiently large.

For the case $\ell=l+1$, we distinguish the case where $\que$, the pairing of $l+2$, is equal to $l+3$. If $\que\neq l+3$, this goes against the time ordering as before and contributes $O(\theta\Delta z)$. So we only need to treat separately the case $\ell=l+1, \que=l+3$, i.e, the pairing $(l,l+1), (l+2,l+3)$. Define the antiderivative 
\[
E^\theta(s)=\int_s^\infty\hat{C}^\theta(t)dt\,.
\]
We note that $|{E}^\theta(s)|<\sfc$ uniformly in $(s,\theta)$ as a bounded measure. 
Integration by parts gives
\begin{equation*}
    \begin{aligned}
Q_{l,l+1}&=E^\theta(s_l)\mathds{1}(l=2n-1)e^{i\psi_{l,l+1}}\mP_{l,l+1}-{E}^\theta(0)e^{i\chi_0(s_l)\phi_{l+1}}\int_{[0,s_l]^{2n-l-1}_<}e^{i\psi_{l,l+1}}\mP_{l,l+1}d\sss_{l+1<}\\
&-\int_0^{s_l}E^\theta(s_l-s_{l+1})\partial_{s_{l+1}}(e^{i\chi_0(s_{l+1})\phi_{l+1}})\Big(\int_{[0,s_{l+1}]^{2n-l-1}_<}e^{i\psi_{l,l+1}}\mP_{l,l+1}d\sss_{l+1<}\Big)ds_{l+1}\\
&-\int_0^{s_{l+1}}E^\theta(s_l-s_{l+1})e^{i\chi_0(s_{l+1})\phi_{l+1}}\Big(\int_{[0,s_{l+1}]^{2n-l-2}_<}e^{i\psi_{l,l+1}}\mP_{l,l+1}\mathds{1}(s_{l+2}=s_{l+1})d\sss_{l+2<}\Big)ds_{l+1}\,\tb{,}
    \end{aligned}
\end{equation*}
\tb{where $\mathds{1}(\cdot)$ denotes the indicator function such that $\mathds{1}(s=t)f(s,t)=f(s,s)=f(t,t)$. As before, we plan to use Lemma 2.6, which involves differentiating $Q_{l,l+1}$. }The first boundary term after differentiating in $s_l$ gives $\hat{C}^\theta(s_l)$, so has to be treated separately. We note that
this term is active only when $l=2n-1$, in which case its contribution to $\Pi_{m,l,l+1}$ is upper bounded by 
\[
\int_{[0,z]^{2n-1}_<}|\mP_{2n-1,2n}|d\sss_{2n-1>}\Big|\int_0^{s_{2n-2}}\int_{s_{2n-1}}^\infty\delta\phi_{2n-1}(s_{2n-1}) \hat{C}^\theta(t)dtds_{2n-1}\Big|\,.
\]
Again, we use the boundedness of $\int_{\mathbb{R}^{2d+1}}\langle k_1\rangle^r\langle k_2\rangle^r|s|\hat{C}(s,k_1,k_2)|dsdk_1dk_2$ for $r=2$ to conclude that this boundary term contributes $O(\theta\Delta z)$. The derivative of the other three terms w.r.t. $s_l$ is
\begin{equation*}
    \begin{aligned}
      &-2E^\theta(0)\partial_{s_l}(e^{i\chi_0(s_l)\phi_{l+1}})\int_{[0,s_l]^{2n-l-1}_<}\hspace{-1.2cm}e^{i\psi_{l,l+1}}\mP_{l,l+1}d\sss_{l+1<}-2E^\theta(0)e^{i\chi_0(s_{l})\phi_{l+1}}\int_{[0,s_l]^{2n-l-2}_<}\hspace{-1.2cm}e^{i\psi_{l,l+1}}\mP_{l,l+1}\mathds{1}(s_{l+2}=s_{l+1})d\sss_{l+2<}\\
      &+\int_0^{s_l}\hat{C}^\theta(s_l-s_{l+1})\partial_{s_{l+1}}(e^{i\chi_0(s_{l+1})\phi_{l+1}})\Big(\int_{[0,s_{l+1}]^{2n-l-1}_<}\hspace{-1cm}e^{i\psi_{l,l+1}}\mP_{l,l+1}d\sss_{l+1<}\Big)ds_{l+1}\\
      &+\int_0^{s_l}\hat{C}^\theta(s_l-s_{l+1})e^{i\chi_0(s_{l+1})\phi_{l+1}}\Big(\int_{[0,s_{l+1}]^{2n-l-2}_<}\hspace{-1cm}e^{i\psi_{l,l+1}}\mP_{l,l+1}\mathds{1}(s_{l+2}=s_{l+1})d\sss_{l+2<}\Big)ds_{l+1}\,.
    \end{aligned}
\end{equation*}
Let $\mP'_{l,l+1}=\mathbb{E}\prod_{l\neq l+1\neq l+2\neq l+3}\hat{\nu}^{\theta}(s_j,k_j)$. Then the above terms are bounded as
\begin{equation*}
    \begin{aligned}
        &\sfc(|E^\theta(0)||\phi_{l+1}|\int_{[0,z]^{2n-l-1}_<}\hspace{-1cm}|\mP_{l,l+1}|d\sss_{l+2<}+|E^\theta(0)|\Big(\sup_{s_l}\int_0^{s_l}\hspace{-0.25cm}|\hat{C}^\theta(s_l-s_{l+3},k_{l+2},k_{l+3})|ds_{l+3}\Big)\int_{[0,z]^{2n-l-3}_<}\hspace{-0.4cm}|\mP'_{l,l+1}|d\sss_{l+3<}\\
        &+|\phi_{l+1}|\Big(\sup_{s_l}\int_0^{s_l}|\hat{C}^\theta(s_l-s_{l+1})|ds_{l+1}\Big)\int_{[0,z]^{2n-l-1}_<}|\mP_{l,l+1}|d\sss_{l+1<}\\
        &+\Big(\sup_{s_l}\int_0^{s_l}\int_0^{s_{l+1}}|\hat{C}^\theta(s_l-s_{l+1})||\hat{C}^\theta(s_{l+1}-s_{l+3})|ds_{l+3}ds_{l+1}\Big)\int_{[0,z]^{2n-l-3}_<}|\mP'_{l,l+1}|d\sss_{l+3<}.
    \end{aligned}
\end{equation*}
 Choosing $r=4$ and summing in $l$, 
 \[
 \sum_{l,\ell}\sup_v\|\langle v-A_m\cdot\rangle^{-r}\Pi_{m,l,\ell}\|\le \sfc^n\frac{n^3}{(n-2)!}\Delta z(\theta +\Delta z)\Big(\int_{\mathbb{R}^{2d+1}}\langle k_1\rangle^4\langle k_2\rangle^4\langle s\rangle|\hat{C}(s,k_1,k_2)|dsdk_1dk_2\Big)^{n}\,.
 \]
Using Lemma \ref{lem:estims} and \eqref{eq:estimgeom} combined with \eqref{eq:bdtv2n1} and $n^3\leq \sfc 2^n$ shows that $\| \E(\psi^\theta_{2n}-\psi^{\theta\Delta}_{2n})(z,\cdot)\|\leq  \frac{\sfc^n}{n!} \Delta z(\theta+\Delta z)$ for some constant $\sfc$ that depends on \eqref{eqn:u0_bound} and \eqref{eq:intcorr}. This is summable in $n$ and concludes the proof of the estimate when $\theta\leq \Delta z$. 

When $\theta\geq \Delta z$, the best above estimate is $\theta^{-1}(\Delta z)^2$ instead. Taking the minimum of these estimates provides $(\Delta z)^\beta$ for a choice of $\beta$ as indicated in the theorem. This confirms the interference effect between $\theta$ and $\Delta z$ that is maximized when $\Delta z=\theta^2$. Note that in practice, the natural interesting regime is $\theta\ll\Delta z$.

This concludes the derivation of the first part of Theorem~\ref{thm:momentestim} for $\sfv=\ut$.

\subsection{Strong stability estimates under medium discretization}\label{subsec:medium_path}
We now aim to prove the second part in Theorem~\ref{thm:pathwise_error} for $\sfv=\ut$ and $\sfv=\utd$. We assume that $\nu(z,x)$ is a stationary random medium with continuously differentiable power spectrum and $\nu_c(z,x)$ is its discretization described in \eqref{eq:discreterandom}. This implies that the two random media are appropriately highly correlated for $K_k$ large and $\Delta k$ small.

We first prove the following result.
\begin{lemma}\label{lem:mediumcorrelation}
    Let $\phi(k,\xi)$ be sufficiently smooth (deterministic) with second derivative bounded by 
   
    $\sfc\aver{k}^M\aver{\xi}^M$. Then for \tb{$\hat \nu_\epsilon\in\{\hat \nu, \hat \nu_c\}$}, we have
\[
      \dint_{[0,Z]^2} ds dt\Big| \int_{\Rm^{2d}} d\xi dk \E (\hat\nu^\theta(s,k) - \hat \nu^\theta_c(s,k)) \hat \nu^\theta_\epsilon(t,\xi)\phi(k,\xi)\Big|\leq \sfc \Big( (\Delta k)^2 \|w \sup_{|\alpha|\leq2} |\partial^\alpha \phi| \|_\infty +K_k^{-N} \|\phi\|_\infty \Big)\,.  
   \]
    Here the weight is $w=\aver{k}^{-M}\aver{\xi}^{-M}$.
\end{lemma}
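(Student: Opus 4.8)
The plan is to compute the expectation $\E(\hat\nu^\theta(s,k)-\hat\nu^\theta_c(s,k))\hat\nu^\theta_\epsilon(t,\xi)$ explicitly from the covariance identities \eqref{eq:mediumcor}, collapse the $\delta$-functions, and reduce the inner $(\xi,k)$-integral against $\phi$ to a sum over the quadrature cells $\square_q$. On the cells lying inside the bulk $\{\chi_c\equiv1\}$ these cell integrals will display a midpoint-rule cancellation of order $(\Delta k)^2$, exactly as in the discrete-medium estimates of Section~\ref{sec:ITO}, while the cells with $q\notin[-\frac12 K_k,\frac12 K_k]^d$ will contribute only the high-frequency tail $O(K_k^{-N})$. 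The time integration will be absorbed uniformly in $\theta$ at the very end.

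Treating the two cases $\hat\nu_\epsilon=\hat\nu$ and $\hat\nu_\epsilon=\hat\nu_c$ in turn, I would use the reality relations $\hat\nu^\theta(t,\xi)=\hat\nu^{\theta\ast}(t,-\xi)$, $\hat\nu_{-q}=\hat\nu_q^\ast$, the two lines of \eqref{eq:mediumcor}, and the rescaling $\hat C^\theta(s,k)=\theta^{-1}\hat C(\theta^{-1}s,k)$. The $\delta$-functions then force $\xi$ onto $-k$ (the continuous-continuous contribution) or onto a lattice point (the contributions involving $\hat\nu_c$), and after integrating out $\xi$ and $k$ one is left, for each fixed $(s,t)$, with an expression of the form
\[
\int_{\Rm^{2d}}\E(\hat\nu^\theta(s,k)-\hat\nu^\theta_c(s,k))\hat\nu^\theta_\epsilon(t,\xi)\,\phi(k,\xi)\,d\xi\,dk
= c\,\theta^{-1}\sum_{q\in\Zm_\Delta^d}\int_{\square_q}\hat C(\theta^{-1}(s-t),k)\,h_q(k)\,dk,
\]
where $h_q$ is the $C^2$ function $\phi(k,-k)-\chi_c(q)\phi(q,-k)$ when $\hat\nu_\epsilon=\hat\nu$, and $\chi_c(q)\big(\phi(k,-q)-\chi_c(q)\phi(q,-q)\big)$ when $\hat\nu_\epsilon=\hat\nu_c$. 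The structural point, which has to be checked, is that on a bulk cell ($\chi_c(q)=1$) one has $h_q(q)=0$ and $\nabla h_q(q)=(\partial_k\phi)(q,-q)$ — in the first case this requires a cancellation between the chain-rule contribution of the diagonal map $k\mapsto\phi(k,-k)$ and the $k$-derivative of $\phi(q,-k)$ — so that $\hat C(\cdot,k)h_q(k)$ vanishes at the cell center; without this one would only obtain $O(\Delta k)$.

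Granting this, the estimate proceeds exactly like the collocation argument used for the discrete random medium in Section~\ref{sec:ITO}. On a bulk cell, writing $\hat C(r,k)h_q(k)=\big(\hat C(r,k)-\hat C(r,q)\big)h_q(k)+\hat C(r,q)h_q(k)$, the first term is $O(|k-q|^2)$ on $\square_q$ because $h_q(q)=0$ and $\hat C$ is $C^1$, while in the second the constant part vanishes and the linear part $\hat C(r,q)\nabla h_q(q)\cdot(k-q)$ integrates to zero over the centered cube; hence
\[
\Big|\int_{\square_q}\hat C(r,k)h_q(k)\,dk\Big|\le C(\Delta k)^{d+2}\Big(\sup_{\square_q}|\nabla_k\hat C(r,\cdot)|\,\sup_{\square_q}|\nabla h_q|+\sup_{\square_q}|\hat C(r,\cdot)|\,\sup_{\square_q}|\nabla^2 h_q|\Big).
\]
Using $\sup_{\square_q}(|\nabla h_q|+|\nabla^2 h_q|)\le C\aver{q}^{2M}\|w\sup_{|\alpha|\le2}|\partial^\alpha\phi|\|_\infty$, recognizing $\sum_q(\Delta k)^d\aver{q}^{2M}(\sup_{\square_q}|\hat C(r,\cdot)|+\sup_{\square_q}|\nabla_k\hat C(r,\cdot)|)$ as a Riemann sum controlled, after $\int_\Rm dr$, by \eqref{eq:intcorr} applied to $\hat C$ and to $\partial_\xi\hat C$ with $M$ large, and using $\int_0^Z\!\int_0^Z\theta^{-1}F(\theta^{-1}(s-t))\,ds\,dt\le Z\int_\Rm F$ to carry out the $(s,t)$-integration uniformly in $\theta$, the bulk cells contribute $O\big((\Delta k)^2\|w\sup_{|\alpha|\le2}|\partial^\alpha\phi|\|_\infty\big)$. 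On a cell with $q\notin[-\frac12 K_k,\frac12 K_k]^d$ I only use $|\int_{\square_q}\hat C(r,k)h_q(k)\,dk|\le2\|\phi\|_\infty\int_{\square_q}|\hat C(r,k)|\,dk$; since the union of these cells lies in $\{|k|_\infty\gtrsim K_k\}$, summing and integrating in $(r,s,t)$ produces $O(K_k^{-N}\|\phi\|_\infty)$ from \eqref{eq:intcorr} with a large moment in $k$. Adding the two contributions yields the claim.

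The main obstacle is the bookkeeping in the first step: one has to carry the cutoff $\chi_c$, the factor $\theta^{-1}$, and the reality relations through the collapse of the several $\delta$-functions in \eqref{eq:mediumcor}, and in particular verify the identity $h_q(q)=0$ on bulk cells, since it is precisely this that turns the naive $O(\Delta k)$ quadrature error into $O((\Delta k)^2)$. Once the per-cell form above is in hand, the remaining work is the same midpoint/Riemann-sum estimate already used in Section~\ref{sec:ITO} and introduces nothing new.
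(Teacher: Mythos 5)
Your proposal is correct and follows essentially the same route as the paper's proof: reduce via \eqref{eq:mediumcor} to per-cell integrals of a difference that vanishes at the cell center, split $\hat C^\theta(k)h_q(k)$ into $(\hat C^\theta(k)-\hat C^\theta(q))h_q(k)$ plus $\hat C^\theta(q)h_q(k)$ and use the midpoint-rule cancellation of the linear Taylor term, then sum cells using \eqref{eq:intcorr} for $\hat C$ and $\partial_\xi\hat C$, bound the cells where $\chi_c\neq1$ by the $K_k^{-N}$ tail, and absorb the $\theta^{-1}$ through the $(s,t)$ integration. The only (immaterial) difference is that you apply one Taylor expansion to the combined $h_q$ rather than to the two one-argument displacements separately as the paper does.
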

\begin{proof}
We deduce from \eqref{eq:mediumcor} that 
\[
\begin{aligned}
    \E (\hat\nu^\theta(s,k) - \hat \nu^\theta_c(s,k)) \hat \nu^\theta(t,\xi) 
    &=
    \hat C^\theta(t-s,k) \big(\delta(k+\xi) - \chi_c(k) \delta(\xi+q_k)\big),
    \\
    \E (\hat\nu^\theta(s,k) - \hat \nu^\theta_c(s,k)) \hat \nu^\theta_c(t,\xi)
    &= \chi_c(q_k) \delta(\xi+q_k) \Big( \hat C^\theta(t-s,k) - \chi_c(q_k) \delta(k-q_k) \int_{\square_{q_k}} \hat C^\theta(t-s,\zeta)d\zeta\Big).
\end{aligned}
\]
Suppose $\hat\nu^\theta_\epsilon=\hat\nu^\theta$. The case $\hat{\nu}^\theta_\epsilon=\hat{\nu}^\theta_c$ follows a similar proof. This amounts to finding a bound for
        \begin{equation}\label{eqn:nu_dif_bound}
     \int_{\Rm^d} \chi_c(q_\xi) \hat C^\theta(t-s,\xi) (\phi(\xi,-\xi)-\phi(q_\xi,-\xi)) d\xi
     \end{equation}
as from~\eqref{eq:intcorr}, the term $\int_{[0,Z]^2}dsdt\big|\int_{\mathbb{R}^d}(1-\chi_c(q_\xi))\hat C^\theta(s,\xi)\phi(\xi,-\xi)d\xi \big|\le \sfc K_k^{-N}\|\phi\|_{\infty}$. Here $q_\xi\in(\Delta k\mathbb{Z})^d$ denotes the center of the cube containing $\xi$. To bound~\eqref{eqn:nu_dif_bound}, we write (suppressing $(s,t)$ dependence):
    \[ \begin{aligned}
        \hat C^\theta(\xi) (\phi(\xi,-\xi)-\phi(q_\xi,-\xi))
    = (\hat C^\theta(\xi)-\hat C^\theta(q_\xi)) (\phi(\xi,-\xi)-\phi(q_\xi,-\xi))  \\
    + \hat C^\theta(q_\xi) (\phi(\xi,-\xi)-\phi(q_\xi,-q_\xi) + \phi(q_\xi,-q_\xi)-\phi(q_\xi,-\xi)).
    \end{aligned}\]
    For $\psi\in C^2(\mathbb{R}^d)$,  $\hat C^\theta(q_\xi)  ( \psi(\xi)-\psi(q_\xi)) = \hat C^\theta(q_\xi) (\xi-q_\xi)\cdot \nabla \psi(q_\xi) + [(\xi-q_\xi)\cdot\nabla]^2\psi(\tilde{q}_\xi)$
        for some $\tilde{q}_\xi\in \square_{q_\xi}$. Since  $\hat{C}^\theta(q_\xi) (\xi-q_\xi)\cdot \nabla \psi(q_\xi)$ integrates to $0$ on that cube, this shows that
\[ \Big| \frac{1}{|\square_{q_\xi}|}  \int_{\square_{q_\xi}}\hspace{-0.3cm} \hat C^\theta(s-t,\xi) (\phi(\xi,-\xi)-\phi(q_\xi,-\xi))d\xi \Big| \leq  \sfc(\Delta k)^2 \aver{\xi}^M \aver{q_\xi}^M \|\hat C^\theta(s-t,\cdot)\|_{C^1(\square_{q_\xi})}\big(\sum_{|\alpha|\le 2}\|w\partial^\alpha\phi\|_{\infty}\big)\,.
\]
It remains to sum over cubes using the rapid decay of $\hat C^\theta(\xi)$ as $|\xi|\to\infty$ given by  \eqref{eq:intcorr}.
\end{proof}

We are ready to prove the \tb{strong} estimates in the second part of Theorem~\ref{thm:pathwise_error} for $\sfv\in\{\ut,\utd\}$.  Consider two solutions $\ut_{1,2}$ propagating in two different media $\nu^\theta_{1,2}$. Then we have
\[\begin{aligned}
\mE(z) := \dint_{\Rm^d}\E |\hut_1-\hut_2|^2(z,\xi)  d\xi  = \dint_{\Rm^d} d\xi\dsum_{n\geq1}\dsum_{m\geq1}  
\int_{[0,z]_<^n} \hspace{-.5cm} d\sss 
\int_{[0,z]_<^m} \hspace{-.5cm} d\ttt 
\int_{\Rm^{nd}} \hspace{-.3cm} d\kkk 
\int_{\Rm^{md}} \hspace{-.3cm} d\qqq 
\  \mI_n(z,\sss,\xi,\kkk) \mI_m^*(z,\ttt,\xi,\qqq)
\\
\E (\hat\mV^\theta_1(\sss,\kkk)-\hat\mV^\theta_2(\sss,\kkk))(\hat\mV^\theta_1(\ttt,\qqq)-\hat\mV^\theta_2(\ttt,\qqq))^*\,,
\end{aligned}\]
with notation $\hat{\mV}^\theta_\epsilon(\sss,\kkk)=\prod_{j=1}^n\hat{\nu}^\theta_\epsilon(s_j,k_j)$, $\epsilon=1,2$.
The sums over $n$ and $m$ start at $1$ since both solutions have the same ballistic component. We find
\[
\begin{aligned}
    \mE(z) &\leq \dsum_{n\geq1}\dsum_{m\geq1}  
\int_{[0,z]_<^n} \hspace{-.5cm} d\sss 
\int_{[0,z]_<^m} \hspace{-.5cm} d\ttt \  \sum_{\epsilon=1}^2 \delta\mJ_{nm\epsilon}(z,\sss,\ttt) \\
\delta\mJ_{nm\epsilon}(z,\sss,\ttt) &= \Big|\dint_{\Rm^d} d\xi\int_{\Rm^{nd}} \hspace{-.3cm} d\kkk 
\int_{\Rm^{md}} \hspace{-.3cm} d\qqq 
\  \mI_n(z,\sss,\xi,\kkk) \mI_m^*(z,\ttt,\xi,-\qqq)
\E (\hat\mV^\theta_1(\sss,\kkk)-\hat\mV^\theta_2(\sss,\kkk)) \hat\mV^\theta_\epsilon(\ttt,\qqq) \Big|.
\end{aligned}
\]
Assume $\nu_1=\nu$ a stationary Gaussian process with smooth correlation function $\hat C(t,\xi)$ and $\nu_2=\nu_c$ the discretization considered earlier.
For $\epsilon=1,2$, we next write the decomposition
\[
\delta \hat{\mV}^\theta(\sss,\kkk) \hat{\mV}^\theta_\epsilon(\ttt,\qqq) =\sum_{l=1}^n \prod_{r=1}^{l-1} \hat \nu_1^\theta(s_r,k_r) (\hat \nu_1^\theta(s_l,k_l)-\hat \nu_2^\theta(s_l,k_l)) \prod_{r=l+1}^{n} \hat \nu_2^\theta(s_r,k_r) \prod_{r=1}^m \hat \nu_\epsilon^\theta(t_r,q_r).
\]
Let $\vs=(\sss,\ttt)$ and $\vk=(\kkk,\qqq)$. In particular, $\sfs_j=s_j$ for $1\le j\le n$ and $\sfs_j=t_{j-n}$ for $n+1\le j\le m+n$. For a fixed value of $l$, the expectation of the above term is 
\[
 \sum_{l\neq\ell=1}^{n+m} \delta\hat C_{l\ell}^\theta(\sfs_l-\sfs_\ell,\sfk_l,\sfk_\ell) \mP_{l\ell},\qquad 
\mP_{l\ell}:=\mathbb{E}[\prod_{j\neq l,\ell}\hat{\nu}^\theta_{\epsilon(j)}(\sfs_j,\sfk_j)]=\sum_{P^{m+n-2}}\prod_{(j,\jay)} \mathbb{E}[\hat{\nu}^\theta_{\epsilon(j)}(\sfs_j,\sfk_j)\hat{\nu}^\theta_{\epsilon(\jay)}(\sfs_\jay,\sfk_\jay)]\,,
\]
where $(j,\jay)$ is a Gaussian pairing with $j,\jay\neq l,\ell$ and $\hat{\nu}^\theta_{\epsilon(j)}$ can be either $\hat{\nu}^\theta_1$ or $\hat{\nu}^\theta_2$ depending on the pairing. The correlation difference $\delta\hat{C}^\theta_{l\ell}$ is defined as
$\delta\hat{C}^\theta_{l\ell}(\sfs_l-\sfs_\ell,\sfk_l,\sfk_\ell)=\E[(\hat \nu_1^\theta(\sfs_l,\sfk_l)-\hat \nu_2^\theta(\sfs_l,\sfk_l))\,\hat \nu_{\epsilon(\ell)}^\theta(\sfs_\ell,\sfk_\ell)]$.
We then observe that the contribution of this term to $\delta\mJ_{nm\epsilon}$ from Lemma~\ref{lem:mediumcorrelation} requires us to bound 
\[
    \sup_j \sup_{|\alpha|\leq 2} | \dint_{\Rm^d} d\xi \partial^\alpha_{\sfk_j} (\mI_n(z,\sss,\xi,\kkk) \mI^*_m(z,\ttt,\xi,\qqq))|\,.
\]
For $\sfv=u^\theta$ from~\eqref{eq:notationDuhamel},  $|\partial_{k_j}\mI_n(z,\sss,\xi,\kkk)|\le 2^n\sup_{0\le s\le z}|\chi_0(s)|\langle\xi\rangle\prod_{j=1}^n\langle k_j\rangle|\hat{u}_0(\sk_n(\xi,\kkk))|+|\partial\hat{u}_0(\sk_n(\xi,\kkk))|$. 
In general,
\begin{equation}\label{eqn:I_deriv}
    \begin{aligned}
|\partial^\alpha_{k_j}\mI_n(z,\sss,\xi,\kkk)|
&\le c^{n|\alpha|}\langle\xi\rangle^{|\alpha|}\prod_{j=1}^n\langle k_j\rangle^{|\alpha|}\sup_{|\alpha_1|\le |\alpha|}|\partial^{\alpha_1}\hat{u}_0(\sk_n(\xi,\kkk))|\,.      
    \end{aligned}
\end{equation}
The case $\sfv=\utd$ is similar replacing $\chi_0(s)$ by $\chid_0(s)$ with same upper bound. This gives
\begin{equation}\label{eq:kderiv}
\begin{aligned}
    & \sup_j \sup_{|\alpha|\leq 2} | \dint_{\Rm^d} d\xi \partial^\alpha_{\sfk_j} (\mI_n(z,\sss,\xi,\kkk) \mI^*_m(z,\ttt,\xi,\qqq))|\\
    &\leq \sfc ^{n+m}\sup_{|\alpha+\beta|\leq 2}  \dint_{\Rm^d} d\xi \aver{\xi}^2 \prod_{j=1}^{n}\aver{k_j}^2\prod_{j=1}^{m}\aver{q_j}^2|\partial^\alpha \hat u_0(\sk_n(\xi,\kkk))|  |\partial^\beta |\hat u_0(\sk_m(\xi,-\qqq))|
  \\
  &\leq \sfc^{n+m}\prod_{j=1}^{n}\aver{k_j}^4\prod_{j=1}^{m}\aver{q_j}^4 \Big( \sup_{|\alpha|\leq 2}  \dint_{\Rm^d} \aver{\xi}^{2} |\partial^\alpha \hat u_0(\xi)|^2\Big).
\end{aligned}
\end{equation}

Now applying Lemma~\ref{lem:mediumcorrelation} (with $M=4$), this term gives an $O((\Delta k)^2+K_k^{-N})$ error after integrating in the $(\sfs_l,\sfs_\ell,\sfk_l,\sfk_\ell)$ variables. Integration in the other variables is handled in the same way and provides an estimate of the form (for $\sfc$ independent of $(n,m,\sss,\ttt)$ and including the $\hat u_0$ terms): 
\[\begin{aligned}
\mE(z) 
 & \leq 
  (\Delta k)^2\dsum_{n\geq1}\dsum_{m\geq1} \sum_{l=1}^n\sum_{l\neq\ell=1}^{n+m} \sfc^{n+m} \int_{\mathcal{S}_{l\ell}}d\vs_{l\ell}\int_{\mathbb{R}^{(n+m-2)d}}|\mP_{l\ell}|\prod_{j\neq l,\ell}\langle \sfk_j\rangle^Md\sfk_j\le \sfc(\Delta k)^2\,.
 \\
\end{aligned}
\]
Here, $\mathcal{S}_{l\ell}$ denote the simplices after removing $(\sfs_l,\sfk_l,\sfs_\ell,\sfk_\ell$) and we have applied~\eqref{eq:controlaver_sum} in Lemma~\ref{lem:estims} using the symmetry of the elements of $\mP_{l\ell}$. This proves the second part of Theorem~\ref{thm:pathwise_error} for $\sfv\in\{\ut,\utd\}$.

\subsection{Moment estimates for medium discretization}\label{subsec:medium_mom}

We now prove the third estimate in Theorem~\ref{thm:momentestim} for $\ut$ and $\utd$. For $(p,q)$ fixed, we write the Duhamel expansion \eqref {eqn:chaos_series2} as
\[ \psi(z,v) = \sum_{n\geq0} \dsum_{m=1}^{(p+q)^n}\psi_{mn}(z,v),\quad\psi_{mn}(z,v) =  \int_{[0,z]^n_<} \hspace{-.4cm} d\sss 
    \int_{\mathbb{R}^{nd}} \hspace{-.4cm} d\kkk\  \tilde{\mI}_m(\sss,v,\kkk)\hat \mV^\theta(\sss,\kkk),
\]
where $\tilde{\mI}_m(\sss,v,\kkk) = \hat\mu_{p,q}(0,v-A_m\kkk) e^{iG_m(\sss,v,\kkk)}$. We wish to estimate $ \delta\psi = \sum_{n\geq0} \sum_{m=1}^{(p+q)^n}\delta\psi_{mn}$ where
\[
 \delta\psi_{mn}  = \int_{[0,z]^n_<} \hspace{-.4cm} d\sss 
    \int_{\mathbb{R}^{nd}} \hspace{-.4cm} d\kkk\ \tilde{\mI}_m(\sss,v,\kkk)
 (\hat \mV^\theta_{1}-\hat \mV^\theta_{2})(\sss,\kkk),\quad\mV^\theta_\epsilon(\sss,\kkk)=\prod_{j=1}^n\hat{\nu}^\theta_\epsilon(s_j,k_j),\quad \epsilon=1,2.
\]
As in Section~\ref{subsec:medium_path}, we have 
\begin{align*}
  \mathbb{E} \delta\psi_{mn}=\sum_{l=1}^n\sum_{l\neq\ell=1}^n\int_{[0,z]^n_<}d\sss\int_{\mathbb{R}^{nd}}d\kkk\tilde{\mI}_m(\sss,v,\kkk)\mathbb{E}[(\hat{\nu}^\theta_1(s_l,k_l)-\hat{\nu}^\theta_2(s_l,k_l))\hat{\nu}^\theta_{\epsilon(\ell)}(s_\ell,k_\ell)]\mP_{l\ell}
\end{align*}
where $\mP_{l\ell}=\mathbb{E}[\prod_{j\neq l,\ell}\hat{\nu}^\theta_{\epsilon(j)}(s_j,k_j)]$. We can now show that $\sup_j \sup_{|\alpha|\leq 2} | \int_{\Rm^d} d\xi \partial^\alpha_{k_j} (\tilde{\mI}_m(\sss,\xi,\kkk) |$ is bounded as in~\eqref{eq:kderiv}, and use Lemma~\ref{lem:mediumcorrelation} to obtain $\|\mathbb{E}\psi_1-\mathbb{E}\psi_2\|\le \sfc[(\Delta k)^2 + K_k^{-N}]$. Phase recompensating and inverse Fourier transforming proves the second part of Theorem~\ref{thm:momentestim}.

\subsection{Strong and moment estimates of spatial concentration}

We now prove the third parts in Theorem~\ref{thm:pathwise_error} and \ref{thm:momentestim} for $\sfv\in \{\ut, \utd\}$. This is based on the following concentration result.
\begin{lemma}\label{lem:boundut}
    Let $\sfv\in \{\ut, \utd, \utc, \utdc\}$ and $r\in\Nm$ and assume that \eqref{eqn:u0_bound} holds for all $N+M\leq 2r$ and \eqref{eq:intcorr} \tb{holds} for $M=2r$. Then  we have the decay estimate $\sup_{0\leq z\leq Z} \sup_{x\in\Rm^d} \E \big[ \aver{x}^{4r} |v(z,x)|^2\big] \leq \sfc(Z,r)$.
\end{lemma}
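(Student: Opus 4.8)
The plan is to reduce the pointwise-in-$x$ weighted estimate to a family of weighted $L^2$-in-$\xi$ bounds on $\xi$-derivatives of $\hut$, and then to control the latter through the Duhamel expansion \eqref{eq:Duhamelhatu} in the manner of the proofs of Lemmas~\ref{lem:estims} and~\ref{lem:stabpsitpq}. I carry out the argument for $\sfv=\ut$; the cases $\sfv\in\{\utd,\utc,\utdc\}$ then follow verbatim, since replacing $\chi_0$ by $\chid_0$ changes no bound (one has $\sup_{[0,Z]}|\chid_0|\le\|\kappa_1\|_\infty Z$ uniformly in $\Delta z$) and replacing $\hat\nu^\theta$ by $\hat\nu^\theta_c$ in the Duhamel expansion is already accommodated by Lemma~\ref{lem:estims}. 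For the reduction, since $\aver{x}^{4r}\le C_{r,d}\sum_{|\alpha|\le 2r}x^{2\alpha}$ it suffices to bound $\E|x^\alpha\ut(z,x)|^2$ for each $|\alpha|\le 2r$; integrating by parts in the inverse Fourier transform gives $x^\alpha\ut(z,x)=\mF^{-1}[(i\partial_\xi)^\alpha\hut(z,\cdot)](x)$, so $|x^\alpha\ut(z,x)|\le(2\pi)^{-d}\|\partial_\xi^\alpha\hut(z,\cdot)\|_{L^1(\Rm^d)}$, and a weighted Cauchy--Schwarz inequality with $\aver{\xi}^s$, $s>d/2$, yields, uniformly in $x$,
\[
\E|x^\alpha\ut(z,x)|^2\le C_{s,d}\int_{\Rm^d}\aver{\xi}^{2s}\,\E|\partial_\xi^\alpha\hut(z,\xi)|^2\,d\xi .
\]
Thus it is enough to prove $\sup_{0\le z\le Z}\int_{\Rm^d}\aver{\xi}^{2s}\,\E|\partial_\xi^\alpha\hut(z,\xi)|^2\,d\xi\le C(Z,r)$ for $|\alpha|\le 2r$.

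Next I would expand $\hut=\sum_{n\ge0}\hut_n$ via \eqref{eq:Duhamelhatu}; bringing $\E$ inside the absolutely convergent integrals, $\E|\partial_\xi^\alpha\hut(z,\xi)|^2$ is dominated by
\[
\sum_{n,m\ge0}\int_{[0,z]^n_<}\hspace{-.4cm}d\sss\int_{[0,z]^m_<}\hspace{-.4cm}d\ttt\int_{\Rm^{nd}}\hspace{-.3cm}d\kkk\int_{\Rm^{md}}\hspace{-.3cm}d\qqq\;|\partial_\xi^\alpha\mI_n(z,\sss,\xi,\kkk)|\,|\partial_\xi^\alpha\mI_m(z,\ttt,\xi,\qqq)|\;\big|\E[\hat\mV^\theta(\sss,\kkk)\overline{\hat\mV^\theta(\ttt,\qqq)}]\big| .
\]
Since $\rG_n$ is quadratic in $(\xi,\kkk)$, with $\xi$-derivatives of the intermediate momenta controlled through \eqref{eq:controlkl}, and since $\partial_\xi\hat u_0(\sk_n(\xi,\kkk))=(\partial\hat u_0)(\sk_n(\xi,\kkk))$, a product-rule computation (the combinatorial prefactors $n^{O(r)}$ being absorbed into a geometric factor $C^n$) gives, as in \eqref{eqn:I_deriv},
\[
|\partial_\xi^\alpha\mI_n(z,\sss,\xi,\kkk)|\le C^n\aver{\sk_n(\xi,\kkk)}^{|\alpha|}\Big(\prod_{j=1}^n\aver{k_j}^{|\alpha|}\Big)\sup_{|\beta|\le|\alpha|}|\partial^\beta\hat u_0(\sk_n(\xi,\kkk))| .
\]

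To close the estimate I would split $\aver{\xi}^{2s}\le C^{n+m}\aver{\sk_n(\xi,\kkk)}^{s}\aver{\sk_m(\xi,\qqq)}^{s}\prod_j\aver{k_j}^{s}\prod_j\aver{q_j}^{s}$ using \eqref{eq:controlkl}, apply Cauchy--Schwarz in $\xi$, and change variables $\xi\mapsto\sk_n(\xi,\kkk)$ resp.\ $\xi\mapsto\sk_m(\xi,\qqq)$; the $\xi$-integral of $\aver{\xi}^{2s}|\partial_\xi^\alpha\mI_n|\,|\partial_\xi^\alpha\mI_m|$ is then bounded by $C^{n+m}\,{\rm C}_{M,N}[u_0]\,\prod_j\aver{k_j}^{M'}\prod_j\aver{q_j}^{M'}$ for some $M'=M'(r,d)$, using \eqref{eqn:u0_bound} with $(M,N)$ large enough. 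After the substitution $q_j\mapsto-q_j$ (exploiting $\hat\nu^{\theta\ast}(q)=\hat\nu^\theta(-q)$), the remaining double series of time--frequency integrals of $\prod_j\aver{k_j}^{M'}\prod_j\aver{q_j}^{M'}\,\big|\E\big[\prod_j\hat\nu^\theta(s_j,k_j)\prod_j\hat\nu^\theta(t_j,q_j)\big]\big|$ is precisely the quantity controlled by \eqref{eq:controlaver_sum} of Lemma~\ref{lem:estims}; it is therefore bounded by $C\,e^{2ZC^2\int_{\Rm}\|\aver{k}^{2M'}\hat C(s,k)\|\,ds}=C(Z,r)$ once \eqref{eq:intcorr} holds with $M$ large enough. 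This gives the required bound for $\sfv=\ut$, and the three remaining cases are identical, the discrete-medium pairings being handled by the same lemma.

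I expect the main obstacle to be the bookkeeping behind the estimate for $|\partial_\xi^\alpha\mI_n|$: the phase $\rG_n$ carries $n+1$ quadratic terms, so $\partial_\xi^\alpha$ produces many contributions, and one must verify both that the resulting polynomial weights in $(\xi,\kkk)$ can be distributed so the $\aver{\sk_n}$-powers are absorbed by the decay of $\hat u_0$ in \eqref{eqn:u0_bound} and the $\aver{k_j}$-powers by \eqref{eq:intcorr}, and that the $n$-dependent combinatorial prefactors grow no faster than $C^n$, so the double sum still converges through \eqref{eq:controlaver_sum}. Everything else is routine given the apparatus developed in Section~\ref{sec:paraxial}.
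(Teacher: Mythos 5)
Your proposal is correct and follows essentially the same route as the paper's proof: a double Duhamel expansion of the weighted second moment, the derivative bound \eqref{eqn:I_deriv} on $\partial_\xi^\alpha\mI_n$, weight-splitting via \eqref{eq:controlkl}, absorption of the $\xi$-integrals by \eqref{eqn:u0_bound}, and summation of the pairings via Lemma~\ref{lem:estims}. The only (cosmetic) difference is your reduction step, which passes through $\|\partial_\xi^\alpha\hut\|_{L^1}$ and a Cauchy--Schwarz with $\aver{\xi}^{-s}$, whereas the paper bounds the inverse Fourier integral of $(-\Delta_\xi)^r\mI_n(-\Delta_\zeta)^r\mI_m^*$ directly by $L^1$-in-$\xi$ norms of weighted derivatives of $\hat u_0$.
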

\begin{proof}
We write the proof of $\sfv=\ut$. The other terms are treated similarly. Consider the function $|x|^{2r}\ut(z,x)$ for $r\in\Nm$, whose (partial) Fourier transform is $(-\Delta_\xi)^r\hut(z,\xi) = \sum_{n\geq0} (-\Delta_\xi)^r\hut_n(z,\xi) 
=  \sum_{n\geq0} \int_{[0,z]_<^n} d\sss \int_{\Rm^{nd}} d\kkk (-\Delta_\xi)^r \mI_n(z,\sss,\xi,\kkk) \hat{\mV}^\theta(\sss,\kkk)
$. Thus $|x|^{4r} \E |\ut(z,x)|^2=\sum_{n,m\geq0}\tilde{\mathcal{J}}_{mn}(z,x)$, where \begin{align*}
   &\tilde{\mathcal{J}}_{m,n}(z,x)=\! \! \int_{\mathbb{R}^{2d}}\hspace{-0.4cm}e^{ix\cdot(\xi-\zeta)}\frac{d\xi d\zeta}{(2\pi)^{2d}}\dint_{[0,z]^n_<} \hspace{-.5cm} d\sss\dint_{[0,z]^m_<} \hspace{-.5cm} d\ttt\int_{\Rm^{(n+m)d}} \hspace{-1cm}d\kkk d\qqq \, (-\Delta_\xi)^r\mI_n(z,\sss,\xi,\kkk)(-\Delta_\zeta)^r\mI^\ast_m(z,\ttt,\zeta,-\qqq)\mathbb{E}\hat{\mV}^\theta(\sss,\kkk)\hat{\mV}^\theta(\ttt,\qqq).
\end{align*}
Using the bound from~\eqref{eqn:I_deriv} with $\alpha=2r$, we obtain as before that $|\tilde{\mathcal{J}}_{nm}|$ is upper bounded by
\begin{align*}
 \sfc^{(n+m)r}\Big(\sup_{|\alpha|\le 2r}\int_{\mathbb{R}^{d}}\langle\xi\rangle^{2r}|\partial^\alpha\hat{u}_0|d\xi\Big)^2\dint_{[0,z]^n_<} \hspace{-.4cm} d\sss\dint_{[0,z]^m_<} \hspace{-.4cm} d\ttt\int_{\Rm^{(n+m)d}} \hspace{-0.7cm}d\kkk d\qqq\prod_{j=1}^n\langle k_j\rangle^{2r}\prod_{l=1}^m\langle q_l\rangle^{2r}|\mathbb{E}\hat{\mV}^\theta(\sss,\kkk)\hat{\mV}^\theta(\ttt,\qqq)|    \,.
\end{align*}
From Lemma~\ref{lem:estims}, these terms are summable for $n+m$ even which finishes the proof for $\sfv=\ut$. The proof for $\sfv=\{\utd,\utc,\utdc\}$ can be obtained in a similar fashion.
\end{proof}

For $\sfv\in \{\ut,\utd,\utc,\utdc\}$, we thus have that $\aver{|x|}^{4N} \E |\sfv(z,x)|^2\leq \sfc$ for $N$ sufficiently large. 
Note that $(\sfv_\sharp - \sfv)=\sum_{h\not=0} \sfv(x-hL)$
so that \eqref{eq:errorwsharp} yields $\E |\sfv_\sharp-\sfv|^2(x)\leq L^{-2N}$. After integration over $\Xm=\Tm_L^d$, we obtain the third part in Theorem~\ref{thm:pathwise_error} for $\sfv\in \{\ut,\utd\}$. Following the proof of Section \ref{sec:periodIS}, we deduce that $ |\mu_{p,q}[\utc]-\mu_{p,q}[u^{\theta}_\sharp]|(z,X,Y)+|\mu_{p,q}[\utdc]-\mu_{p,q}[\utdp]|(z,X,Y)\le \sfc_NL^{-N}$.
This proves the third part of Theorem~\ref{thm:momentestim} for $\sfv\in\{\ut,\utd\}$. 

\subsection{\tb{Strong} and moment estimates for spatial discretization}
\label{sec:pathwise}

We recall the discrete Fourier transform and orthogonal projection $\Pi=\Pi_{N_x}$ defined in \eqref{eq:discFourier}.  For $f\in L^2(\Tm_L^d)$, the polynomial $\Pi_{N_x}f$ is uniquely characterized by its values at the grid points $(\Delta x \Zm)^d \cap \Tm_L^d$.

The periodized functions $\utp$ and $\utdp$ are both solutions of the equation
\[
 \partial_z \sfv_\sharp = i \phi(z) \Delta_x \sfv_\sharp + i\nu^\theta_c(z,x) \sfv_\sharp,\quad z>0,\ x\in\Tm_L^d
\]
with $\sfv_\sharp(0,x)=u_0(x)$ and $\phi(z)=\kappa_1(z)$ for $\sfv_\sharp=\utp$ while $\phi(z)=\tbb{\tau_\gamma(z)}\kappa_1(z)$ for $\sfv_\sharp=\utdp$. The spatial discretization $\sfv_\delta$ then satisfies in both cases the defining equation \eqref{eq:discreteeq}
\[
 \partial_z \sfv_\delta = i \phi(z) \Delta_x \sfv_\delta + i  \Pi(\nu^\theta_c(z,x) \sfv_\delta),
\]
with $\sfv_\delta(0,x)=\Pi_{N_x}u_0(x)$. Indeed, $\Pi_{N_x}(\nu^\theta_c(z,x) \sfv_\delta)$ can be implemented on each grid point of the mesh by point-wise multiplication, $\Delta_x$ can be implemented locally in the Fourier variables and hence written as a finite-rank operator (matrix) acting on $\sfv_\delta(j\Delta x)$.  

This shows that $\Pi_{N_x}\sfv_\sharp$ and $\sfv_\delta$ satisfy the same evolution equation with the same initial condition, but with operators acting on different spaces (finite dimensional for $\sfv_\delta$, not for $\sfv_\sharp$). The comparison between the two solutions may then be obtained as usual by analyzing Duhamel expansions. 

Suppose $\sfv=\ut$. The Duhamel expansion in the periodic case starts in the Fourier domain from:
\[ \hat \sfv_l(z)=\hat\sfv_l(0)e^{-i\chi_0(z) |\Delta k l|^2}+i\int_0^z ds e^{-i\chi_s(z) |\Delta k l|^2}  \sum_{k\in \Zm^d} \hat \nu^\theta_{k}(s) \hat \sfv_{l-k}(s). \]
Here, $\hat \nu^\theta_{n}(s)=\hat{\nu}^\theta_c(s,n\Delta k)$ is the Fourier coefficient of the discrete periodic random medium. This shows that
\[ \hat \sfv_l(z) = \sum_{n\geq0} \int_{[0,z]^n_<} \hspace{-.4cm} d\sss  \sum_{\kkk\in\Zm^{nd}} e^{i\rG_n(z,\sss,\Delta kl,\Delta \kkk)} \hat{\mV}^\theta(\sss,\kkk) \hat{u}^0_{l-A_n\kkk},\quad\hat{\mV}^{\theta}(\sss,\kkk):=\prod_{j=1}^n\hat{\nu}^\theta_{k_j}(s_j)
\]
 for the real valued phase $\rG_n(z,\sss,\Delta kl,\Delta \kkk)=\frac{n\pi}{2}+\sum_{j=1}^n\chi_0(s_j)|\Delta k|^2(|l-\sum_{r=1}^{j-1}k_r|^2-|l-\sum_{r=1}^jk_r|^2)-\chi_0(z)|\Delta k l|^2$ and linear operator $A_n\kkk=-\sum_{j=1}^nk_j$. The Fourier coefficients of the source $\hat{u}^0_l=\hat{u}_l(0)$. Let $w=\sfv_\delta=\Pi_{N_x}w$ be the spatially discrete approximation solution of the above finite dimensional system of ordinary differential equations in $z$ when $\sfv=\utp$ or its splitting approximation when $\sfv=\utdp$. The Duhamel expansion is then obtained by replacing multiplication by $\nu^\theta(z,x)$ by the composition $\Pi_{N_x} \otimes \nu^\theta(z,x)$. In the Fourier domain, this is simply multiplication by $\mathds{1}_{|l_\epsilon|<N_x}$ following (discrete) convolution with $\hat w_{l}$. Here $\mathds{1}_{|l_\epsilon|\leq N_x}$ is the indicatrix function of indices such that $|l_\epsilon|\leq N_x$ for each $\epsilon=1,\cdots,d$. In other words,
\[ 
\begin{aligned}
    \hat w_l(z)&=\hat{w}_l(0)e^{-i \chi_0(z)|\Delta k l|^2}+i\int_0^z ds e^{-i\chi_s(z) |\Delta k l|^2}  \mathds{1}_{|l_\epsilon|\leq N_x}  \sum_{k\in \Zm^d} \hat \nu^\theta_{k}(s) \hat w_{l-k}(s),
    \\
    & = \sum_{n\geq0} \int_{[0,z]^n_<} \hspace{-.4cm} d\sss  \sum_{\kkk\in \Zm^{nd}} e^{i\rG_n(z,\sss,\Delta k l,\Delta k\kkk)} \prod_{j=0}^{n-1} \mathds{1}_{|(\sk_j(l,\kkk))_\epsilon|\leq  N_x} \hat{\mV}^\theta(\sss,\kkk) \hat u^0_{l-A_n\kkk},
\end{aligned}
\]
where we defined $\sk_j(l,\kkk)=l-\sum_{m=1}^j k_m$ while $\sk_0(l,\kkk)=l$. This implies that $(\hat v_l-\hat w_l)\aver{\Delta k l}^{2M}$ is given by
\[
 \sum_{n\geq0} \sum_{r=1}^n \int_{[0,z]^n_<} \hspace{-.4cm} d\sss  \sum_{\kkk\in \Zm^{nd}} e^{i\rG_n(z,\sss,\Delta kl,\Delta k\kkk)} \aver{l}^{2M} \mathds{1}^c_{|(\sk_r(l,\kkk))_\epsilon|\leq N_x} \prod_{j=r+1}^{n} \mathds{1}_{|(\sk_j(l,\kkk))_\epsilon|\leq  N_x} \hat{\mV}^\theta(\sss,\kkk) \hat u^0_{l-A_n\kkk},
\]
using $\mathds{1}_D^c=\mathds{1}-\mathds{1}_D$.
Now, $\mathds{1}^c_{|(\sk_j(l,\kkk))_\epsilon|\leq  N_x}\leq (\Delta k N_x)^{-N} |\Delta k\sk_j(l,\kkk)|^N$ (with $|\sk_j|$ Euclidean norm).  Using \eqref{eq:controlkl}, we bound $\aver{\Delta kl}^{2M}$ and the latter by products  of $\aver{\Delta k(l-A_n\kkk)}^N$ and $\aver{\Delta k k_j}^N$ while using \eqref{eq:controlaver} as in the derivation of Lemma \ref{lem:stabpsitpq}, we have $\E \|\sfv_\sharp - \sfv_\delta\|^2_{H^{2M}(\Tm_L^d)} \leq \sfc (\Delta x)^{2N}$. The same calculation shows that $\sfv_\sharp=\utp$ and $\sfv_\sharp=\utdp$ are also smooth functions of $x$ since $ \E |\hat \sfv_l(z)|^2 \aver{\Delta kl}^{4M} $ is given by
\[\sum_{n,m\geq0}\int_{[0,z]^n_<} \hspace{-.4cm} d\sss \int_{[0,z]^m_<}\hspace{-.4cm} d\ttt \dsum_{\kkk\in\mathbb{Z}^{nd}} \dsum_{\qqq\in\mathbb{Z}^{md}} e^{i[\rG_n(z,\sss,\Delta kl,\Delta k\kkk)-\rG_m(z,\ttt,\Delta kl,\Delta k\qqq)]}  \aver{\Delta kl}^{4M} \E \hat{\mV}^\theta(\sss,\kkk) \hat{\mV}^{\theta *}(\ttt,\qqq) \hat u^0_{l-A_n\kkk} \hat u^{0*}_{l-A_m\qqq} \]
whose analysis using \eqref{eq:controlkl} and \eqref{eq:controlaver} again provides a bound on $\E\|(-\Delta)^M\sfv_\sharp(z,x)\|^2\leq \sfc$.

For $\sfv_\sharp\in\{u^\theta_\sharp,u^{\theta\Delta}_\sharp\}$ consider the moments $\mu_{p,q}[\sfv_\sharp](X,Y)= \E \prod_j \sfv_\sharp(x_j) \prod_k \sfv_\sharp^*(y_k)$. Assume $q=0$ to simplify notation with an obvious extension to the general case. Using the H\"older inequality and Sobolev embedding, we have:
\[ \begin{aligned} &|\mu_{p}[\sfv_\sharp](X)-\mu_p[\sfv_\delta](X)| = | \sum_{k=1}^p \E \prod_{j=1}^{k-1} \sfv_\sharp(x_j) (\sfv_\sharp(x_k)-\sfv_\delta(x_k)) \prod_{j=k+1}^p \sfv_\delta(x_j)| 
\\ \leq \ &  \sum_{k=1}^p \big(\E |\prod_{j=1}^{k-1} \sfv_\sharp(x_j)|^4\big)^{\frac14}  \big(\E \prod_{j=k+1}^p \sfv_\delta(x_j)|^4\big)^{\frac14} \big(\E |\sfv_\sharp(x_j)-\sfv_\delta(x_j)|^2\big)^{\frac12}
\\ \leq \ &   p  |\mu_{4(k-1)}[\sfv_\sharp](X_{4(k-1)})|^{\frac14} |\mu_{4(p-k)}[\sfv_\delta] (X_{4(p-k)})|^{\frac14} \, \big(\E \|\sfv_\sharp-\sfv_\delta\|^2_{H^{\frac d2+1}}\big)^{\frac12}.
\end{aligned}\]
Spatial moments are all bounded as an application of the Duhamel formula as in the proof of Lemma \ref{lem:stabpsitpq}, both on $\Rm^d$ and $\Tm_L^d$ uniformly in $(z,X)$. Thus, $|\mu_{p}[\sfv_\sharp](X)-\mu_p[\sfv_\delta](X)| \leq \sfc (\Delta x)^{N}$. The extension to arbitrary $(p,q)$ is mainly notational. This concludes the proof of the final estimate in Theorem~\ref{thm:momentestim} for $\sfv_\sharp\in\{\utp,\utdp\}$.

\section{Numerical examples}
\label{sec:num}

This section illustrates our theoretical results with numerical simulations. We consider a numerical setting with final distance $Z=1$ and $\kappa_1=1$.

As a first example, we consider a two-dimensional experimental setting with lateral dimension $d=1$. We assume a Gaussian incident source profile $ u_0(x)=e^{-\frac{1}{2}x^2}$.  We first consider a first order splitting scheme with \tb{$\gamma=0$}. For the spatial discretization, we set $L=2^6$ and $\Delta x=2^{-4}$ with $N_x=2^{10}$ grid points along the $x$ direction.  We vary $\Delta z$ between $2^{-5}$ and $2^{-10}$ with $\Delta z=2^{-10}$ serving as the reference solution. 
 
 In the paraxial model, we fix $\theta=2^{-9}$. The splitting solution~\eqref{eq:splittingexplicit} evaluated at the grid points $\{z_n\}$ translates to $\utdd(z_n,x) =   e^{iW^{\theta\Delta}_n(x)}  \mG(\Delta z)\utdd(z_{n-1},x)$, where $ W^{\theta\Delta}_n(x)=\int_{z_{n-1}}^{z_n}{\nu}^\theta_c(s,x)ds$.
The action of the the discrete Laplacian $\mG(\Delta z)\utdd(z_{n-1},x)$ is implemented through a FFT/IFFT routine. Since the splitting scheme requires only integrals of the potential $W^{\theta\Delta}_n$, the sampling procedure is significantly simplified when $\nu_c$ is Gaussian as is described below. 
\paragraph{Construction of the random medium and sampling procedure} We assume that the random medium is given as in Remark~\ref{rem:nu_c_disc} by $ \nu_c(z,x)=\sum_{q\in\mathbb{Z}_\Delta}\chi_c(q)\hat{\nu}_q(z)\frac{e^{iqx}}{2\pi}$, where $\Delta k=\frac{2\pi}{L}$, $\mathbb{Z}_\Delta$ is the grid $\Delta k\mathbb{Z}$ and $\chi_c(q)$ the indicator function with cutoff $K_k=2\pi\times 2^4$, i.e, $\chi_c(q)=1$ when $-\frac{K_k}{2}\le q\le \frac{K_k}{2}$ and $0$ otherwise. $\{\hat{\nu}_q(z)\}_{q\in\mathbb{Z}_\Delta}$ are \tb{mean-zero} Gaussian random variables with covariance $ \mathbb{E}\hat{\nu}_p(s)\hat{\nu}^\ast_q(t)=\mathds{1}_{p=q}(2\pi\Delta k)\hat{C}(s-t,p)$,
where the lateral covariance $\hat{C}$ is assumed to be $\hat{C}(s,k)=\sigma e^{-\frac{|s|^2+|k|^2}{4\pi^2}}$ and the noise level $\sigma$ is varied. Due to the Gaussian assumption on $\nu_c$, the integrals $W^{\theta\Delta}_n$ can be written in terms of a finite number of Gaussian random variables as $ W^{\theta\Delta}_n(x)={\sqrt{\theta}}\sum_{q\in\mathbb{Z}_\Delta}\chi_c(q)\hat{W}_{n,q}^{\theta}\frac{e^{iqx}}{2\pi}$. Here, $ \hat{W}_{n,q}^{\theta}=\int_{\frac{z_{n-1}}{\theta}}^{\frac{z_n}{\theta}}\hat{\nu}_q(s)ds$ are \tb{mean-zero} Gaussian random variables with covariance $   \mathbb{E}\hat{W}_{m,p}^{\theta}\hat{W}_{n,q}^{\theta\ast}=\mathds{1}_{p=q}(4\pi\Delta k)\int_0^{\frac{\Delta z}{\theta}}\big(\frac{\Delta z}{\theta}-s\big)\hat{C}\big(|n-m|\Delta z +s,p\big)ds$.
This allows us to generate samples of the random variable $ W^{\theta\Delta}_n$ easily by drawing samples from a Gaussian distribution with covariance function given above. 

We denote by $u^\theta_{\rm{ref}}$ the fine grid solution. Let $\Xm$ be the torus $[-\frac12 L,\frac12 L]$. For each $\Delta z$, we approximate the strong error norm $\TP\cdot\TP_{\Xm}$ with the discrete version $\sup_{1\le n\le N_z} (\sum_{j=1}^{N_x}\mathbb{E}|(\utdd(z_n,x_j)-u^\theta_{\rm{ref}}(z_n,x_j)|^2)\Delta x)^{\frac12}$. In the first panel of Figure~\ref{fig:error_dz}, we plot the strong error from the numerical scheme on a log-log scale. In the second and third panels, we plot the numerical errors in the second and fourth moments respectively around the concentration of the beam given by $ \sup_{1\le n\le N_z}\sup_{N_x/4\le j\le 3N_x/4} |\mathbb{E}|\utdd(z_n,x_j)|^p-\mathbb{E}|u^\theta_{\rm{ref}}(z_n,x_j)|^p|$, $p=2,4$. All expectations are computed using an average of $10^4$ Monte Carlo samples. The empirical slope of all three graphs are close to 1, indicating an error rate of $O(\Delta z)$ for both strong and moment estimates as predicted from theory.

To simulate the It\^o-Schr\"{o}dinger model, we replace the coefficients $\hat{W}^{\theta}_{n,q}$ above by the \tb{mean-zero} Gaussian random variable $\hat B_{n,q}$ with covariance $ \mathbb{E}\hat B_{m,p}\hat B^\ast_{n,q}=\mathds{1}_{m=n}\mathds{1}_{p=q}(2\pi\Delta k\Delta z)\hat{R}(q)$, $\hat{R}(k)=\int_{\mathbb{R}}\hat{C}(s,k)ds=\sigma e^{-\frac{|k|^2}{4\pi^2}}$. The experimental setup in the paraxial setting is then repeated for the It\^o-Schr\"{o}dinger case in Figure~\ref{fig:error_dz}. We observe the rate of convergence of $\udd$ to the reference solution $u_{\rm{ref}}$ as $\Delta z$ varies. The slopes in all the three panels are again close to 1, indicating a convergence rate of $O(\Delta z)$. 
\begin{figure}
  \centering
    \includegraphics[scale=0.1]{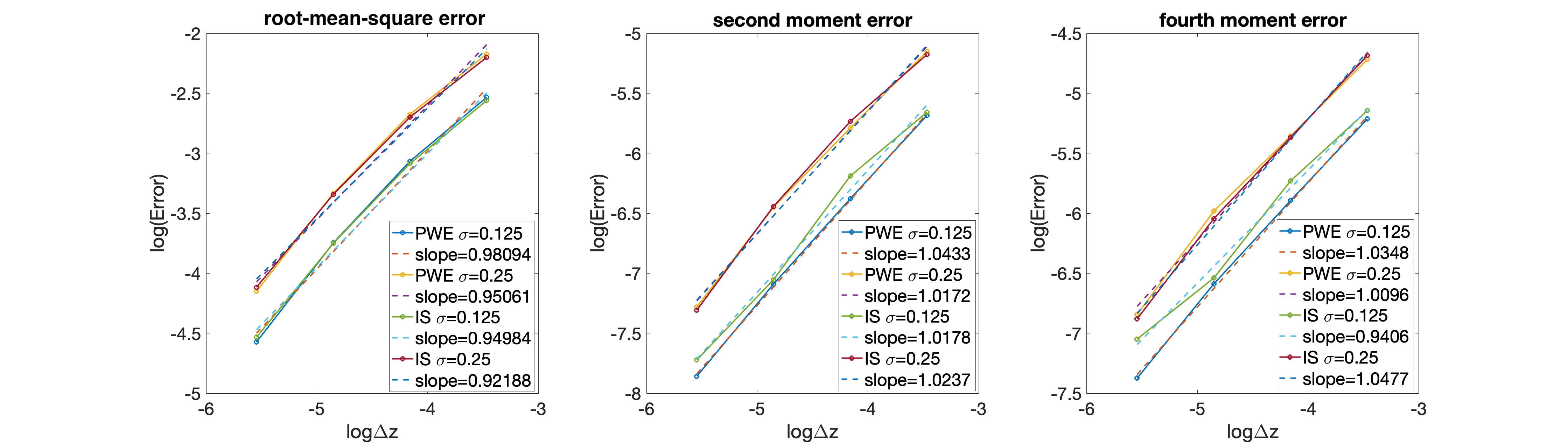}
    \caption{Error plots from first order scheme for the \tbb{paraxial wave equation (PWE)} and It\^o-Schr\"{o}dinger \tbb{(IS)} models}
    \label{fig:error_dz}         
\end{figure}

As a second example, we simulate the It\^o-Schr\"{o}dinger equation with $\gamma=\frac12$. The sampling procedure for the medium is identical as in the first order scheme. As we anticipate the need for significant Monte Carlo averaging in order to observe an error of order $(\Delta z)^2$, we use fewer grid points in $x$ to reduce the computational effort. For this,  we set $L=2^4$ and $\Delta x=2^{-2}$ so that there $N_x=2^6$ grid points in $x$. The noise level $\sigma$ is set to $0.125$ and the reference solution is computed using $\Delta z=2^{-10}$.  In the first panel of Figure~\ref{fig:error_dz_Strang}, we plot the strong error in the numerical scheme. This is still $O(\Delta z)$ as expected. In the second and third panels, we plot the errors in a few Fourier modes given by $ \sup_{1\le n\le N_z} \sum_{j=1}^{N_x}|(\mathbb{E}|\udd(z_n,x_j)|^p-\mathbb{E}|u_{\rm{ref}}(z_n,x_j)|^p)e^{i\frac{m\pi x_j}{L}}|\Delta x$, $p=2,4$  for $m=1,3,5$. All statistical averages are computed using $10^7$ realizations of the random medium. The plots indicate a convergence rate of $O(\Delta z)^2$ as expected from theory.
\begin{figure}
  \centering
     \includegraphics[scale=0.1]{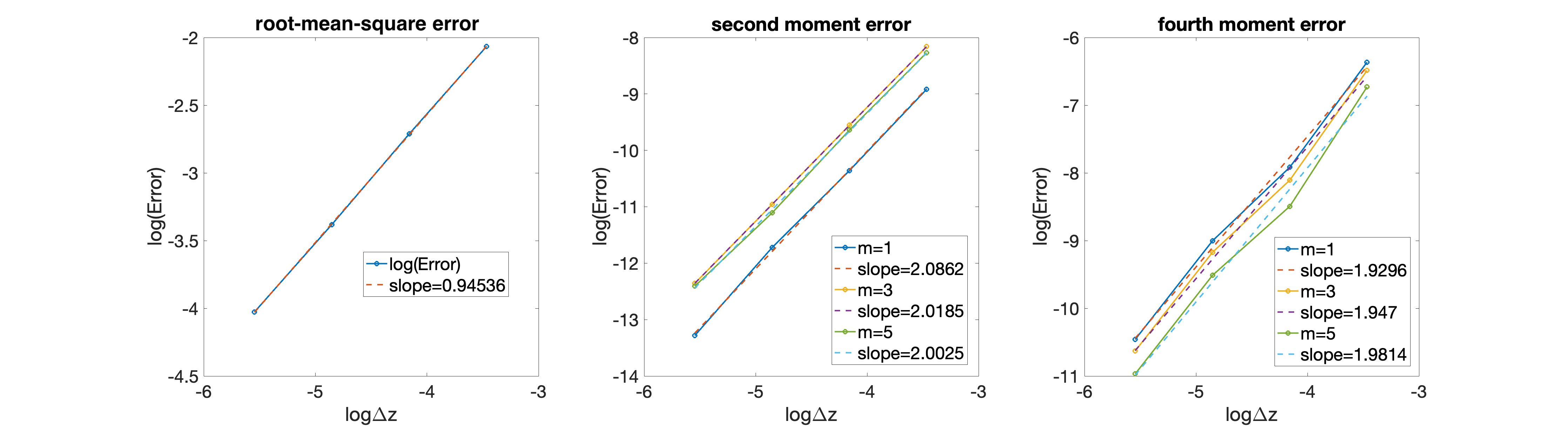}
    \caption{Rate of convergence with the second order splitting scheme under the It\^o-Schr\"{o}dinger model}
    \label{fig:error_dz_Strang}         
\end{figure}

\section{Conclusions}
\label{sec:conlu}
This paper developed an approximation theory for the (time) splitting and full spatial discretization of paraxial and \IS\ equations of wave propagation in random media modeled by short-range Gaussian processes. For the paraxial model, we confirmed surprising observations that the splitting algorithm converged even when the splitting step $\Delta z$ was not the smallest scale of the process; namely $\theta\ll\Delta z$ in practice. We obtained two different convergence results: a first-order one in the mean\tb{-}square sense, and a second-order one for centered splitting for statistical moments in the uniform sense, leveraging total variation estimates in the Fourier variables as in \cite{bal2024complex}. In both cases, we used the availability of closed-form equations satisfied by moments of the \IS\ equation and had to analyze a full Duhamel expansion for the paraxial model. It is quite possible that higher-order splitting algorithms \cite{geometric2006} can be developed to analyze moments of the \IS\ equation. For the paraxial model, we already observed interesting interactions at second-order between $\Delta z$ and $\theta$ as both parameters tend to zero. While our results apply to long distance propagation, longer distances yet may be considered in the scintillation and diffusive regimes considered in \cite{bal2024complex, bal2024long, bal2025long}, where speckle forms and scintillation builds up as briefly illustrated in the Supplementary Material.


\newpage

\section{Supplementary Material}

In Section~\ref{sec:conv_proc} of this Supplementary Material, we provide a result on the convergence of the solution to the fully discrete numerical scheme $\utdd$ of the paraxial approximation given by
\begin{equation}\label{eqn:PWE_supp}
    \begin{aligned}        \partial_z\ut&=i\kappa_1(z)\Delta_x\ut+ i\kappa_2(z)\frac1{\sqrt{\theta}}\nu\Big(\frac{z}{\theta},x\Big)\ut,\quad z>0, x\in\mathbb{R}^d ;\qquad 
        \ut(0,x)=u_0(x),
    \end{aligned}
\end{equation}
to that of the It\^o-Schr\"{o}dinger model $u$ given by
\begin{equation}\label{eqn:Ito_supp}
    \mathrm{d}u=i\kappa_1(z)\Delta_xu\mathrm{d}z-\frac{\kappa_2^2(z)R(0)}{2}u\mathrm{d}z+i\kappa_2(z)u\mathrm{d}B,\quad u(0,x)=u_0(x)\,.
\end{equation}
As in~\cite{bal2024complex, bal2024long} the proof is based on identifying the limiting distribution through its statistical moments followed by a tightness result. 

We next provide a formal derivation of the paraxial model from the Helmholtz equation in Section~\ref{app:background}. Finally in Section~\ref{sec:add_num_eg} we conclude by providing additional numerical examples which validate the numerical schemes for both paraxial and It\^{o}-Schr\"{o}dinger models using the analytical expressions available to the first two statistical moments in the It\^o-Schr\"{o}dinger case. We also provide illustrations of speckle phenomena consistent with physical observations when optical beams propagate through strong turbulence and with theoretical predictions in \cite{bal2024complex,bal2024long}.
\subsection{Convergence of finite dimensional distributions and tightness}\label{sec:conv_proc}
We have the following notions of convergence in distribution as all parameters tend to $0$.  We write these results for $\sfu^{\theta\Delta}_\delta(z,x)=\utdd(z,x)\chi_L(x)$ for concreteness. The exact same results apply to $\ut$, $\utd$, $\ud$, and $\udd$ after extending them to appropriate domains. 
We first have a result for finite dimensional distributions.
For a collection of $n$ points $(x_1,\cdots,x_n)\in\mathbb{R}^{nd}$, we define the random vector $\mathbf{M}[u](x_1,\cdots,x_n)=\big(u(x_1),\cdots,u(x_n)\big)$.  

\begin{proposition}[Convergence of finite dimensional distributions]\label{prop:conv_dist}
For fixed $0\le z\le Z$, the random vector 
\[
\mathbf{M}[\sfu^{\theta\Delta}_\delta(z,\cdot)](X) \Rightarrow \mathbf{M}[u(z,\cdot)](X)
\]
as $(\theta,\Delta z,\Delta k,K_k^{-1},\Delta x)\to (0,0,0,0,0)$ where $u(z,x_j)$ follows the It\^{o}-Schr\"{o}dinger equation~\eqref{eqn:Ito_supp}.
\begin{proof}
From~\cite{dawson1984random}, 
\[
\|u(z,x)\|_{L^2(\mathbb{R}^d)}= \|u(0,x)\|_{L^2(\mathbb{R}^d)}\quad \text{a.s.}
\]
As in the proof of Proposition~\ref{prop:tightness}, it can be shown that the process $x\to u(z,x)$ is continuous a.s., which means for a fixed $z$, $|u(z,x)|\le c$ a.s. for some $c$. This gives us that $\mu_{p,p}(z,X,Y)\le c^{2p}$, and from a Carleman criterion, the probability distribution of $u$ can be identified using its statistical moments. This along with Corollary 2.4 proves Proposition~\ref{prop:conv_dist}.   
\end{proof}
\end{proposition}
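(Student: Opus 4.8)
The plan is to establish convergence in distribution of the finite-dimensional marginals $\mathbf{M}[\sfu^{\theta\Delta}_\delta(z,\cdot)](X)$ by combining three ingredients already available in the paper: (i) uniform boundedness of all statistical moments, which gives tightness of the law of the random vector on $\mathbb{C}^n$ and also justifies the moment method; (ii) convergence of each joint moment $\mathbb{E}\prod_j \sfu^{\theta\Delta}_\delta(z,x_j)\prod_l \overline{\sfu^{\theta\Delta}_\delta(z,y_l)}$ to the corresponding moment of the It\^o-Schr\"odinger field, which is exactly Corollary~\ref{coro:PWE_to_IS} (all error terms in the bracket vanish as $(\theta,\Delta z,\Delta k,K_k^{-1},\Delta x)\to 0$); and (iii) a Carleman-type determinacy criterion ensuring that the limit moments uniquely pin down the limiting law. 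First I would record that for a fixed $z$ the It\^o-Schr\"odinger field $x\mapsto u(z,x)$ is almost surely continuous (this is shown in the proof of Proposition~\ref{prop:tightness}) and that $\|u(z,\cdot)\|_{L^2}$ is conserved almost surely by \cite{dawson1984random}; together with the fast spatial decay from Lemma~\ref{lem:regulmu11} and its moment analogue, these give a uniform almost-sure bound $|u(z,x)|\le c$ and hence $\mu_{p,q}[u](z,X,Y)\le c^{p+q}$.

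Next I would argue the determinacy step carefully. Because the moments grow at most geometrically, $|\mu_{p,q}[u](z,X,Y)|\le c^{p+q}$, the joint law of the real and imaginary parts of $\big(u(z,x_1),\dots,u(z,x_n)\big)$ satisfies a multivariate Carleman condition: each one-dimensional marginal, and more generally each linear combination, has moments bounded by $C^k k$-th powers of a constant, so $\sum_k m_{2k}^{-1/2k}=\infty$. This forces the moment problem to be determinate, so a probability measure on $\mathbb{C}^n\cong\mathbb{R}^{2n}$ with these moments is unique. Then, since $\mathbf{M}[\sfu^{\theta\Delta}_\delta(z,\cdot)](X)$ has moments that converge to those of $\mathbf{M}[u(z,\cdot)](X)$ by Corollary~\ref{coro:PWE_to_IS}, and since moment convergence to a determinate limit implies weak convergence (the standard Fr\'echet–Shohat argument: tightness follows from boundedness of second moments, and every weak subsequential limit has the prescribed, hence unique, moments), we conclude $\mathbf{M}[\sfu^{\theta\Delta}_\delta(z,\cdot)](X)\Rightarrow \mathbf{M}[u(z,\cdot)](X)$.

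The main obstacle, and the place requiring the most care, is verifying that Corollary~\ref{coro:PWE_to_IS} actually delivers convergence of the \emph{complex} joint moments $\mathbb{E}\prod_j \sfu^{\theta\Delta}_\delta(z,x_j)\prod_l \overline{\sfu^{\theta\Delta}_\delta(z,y_l)}$ for all $(p,q)$, not merely the ``balanced'' ones $p=q$: the Carleman/moment-method identification of a law on $\mathbb{R}^{2n}$ needs the full array of mixed moments in $\mathrm{Re}\,u$ and $\mathrm{Im}\,u$, equivalently all $\mu_{p,q}$. This is fine here since Corollary~\ref{coro:PWE_to_IS} is stated for arbitrary $(p,q)$, but one must also check that the window truncation $\chi_L$ in the definition $\sfu^{\theta\Delta}_\delta(z,x)=\utdd(z,x)\chi_L(x)$ does not spoil the estimates at the evaluation points $x_j$ — for $L$ large enough all $x_j$ lie in $[-L/4,L/4]^d$ where $\chi_L\equiv 1$, so the truncation is invisible in the limit, but this should be remarked. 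A secondary, routine point is ensuring the constants $C_N$ in Corollary~\ref{coro:PWE_to_IS} are uniform in the fixed finite point configuration $X$, which follows from the $L^\infty(D)$ form of that estimate.
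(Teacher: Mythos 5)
Your proposal follows essentially the same route as the paper: uniform moment bounds for the limit field obtained from a.s.\ continuity and $L^2$-conservation, a Carleman-type determinacy argument so the It\^o--Schr\"odinger law is characterized by its moments, and the moment convergence supplied by Corollary~\ref{coro:PWE_to_IS} combined with a Fr\'echet--Shohat/moment-method step to conclude weak convergence of the finite-dimensional vectors. The extra details you add (the need for all mixed $(p,q)$ moments rather than only the balanced ones, and the observation that the window $\chi_L$ is invisible at fixed evaluation points for $L$ large) are consistent with, and slightly more explicit than, the paper's argument.
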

For fixed $z$, we have a result on the stochastic continuity and relative compactness of the process $x\to\sfu^{\theta\Delta}_\delta(z,x)$. 
\begin{proposition}[Tightness]\label{prop:tightness}
    The tightness criterion $  \mathbb{E}|\sfu^{\theta\Delta}_\delta(z,x+h)-\sfu^{\theta\Delta}_\delta(z,x)|^{2n}\le c(z,n)|h|^{2n}$ holds, where $c$ is a constant independent of $\theta$, $\Delta z$, $\Delta k$, $K_k$ and $\Delta x$.
    \begin{proof}
        We write the difference 
\[
\sfu^{\theta\Delta}_\delta(z,x+h)-\sfu^{\theta\Delta}_\delta(z,x)= [\utdd(z,x+h)-\utdd(z,x)]\chi_L(x)+\utdd(z,x+h)[\chi_L(x+h)-\chi_L(x)] 
\]
so that
\begin{align*}
    |\sfu^{\theta\Delta}_\delta(z,x+h)-\sfu^{\theta\Delta}_\delta(z,x)|^{2n}&\le C(n) (|\utdd(z,x+h)-\utdd(z,x)|^{2n}|\chi_L(x)|^{2n}\\
    &+|\utdd(z,x+h)|^{2n}|\chi_L(x+h)-\chi_L(x)|^{2n})\,.
\end{align*}
From the smoothness of $\chi_L$ and the boundedness of $\utdd$, the last term after taking expectation is bounded by $c(n)\|\chi_L\|_{1,\infty}|h|^{2n}$. 
From Section~\ref{sec:pathwise}, the difference
\begin{equation*}
    \utdd(z,x+h)-\utdd(z,x)=\sum_{l\in \mathbb{Z}^d}\mathds{1}_{|l_\mu|\le N_x}\hat{v}_l(z)(e^{i\Delta kl\cdot(x+h)}-e^{i\Delta k l\cdot x})\,.
\end{equation*}
For $\mu=1,\cdots,d$, let $\alpha_{j,\mu}, \beta_{j,\mu}$ denote the $\mu$th component of $\alpha_j,\beta_j$. For $n\ge 0$ this gives
\begin{equation*}
\begin{aligned}
    &\mathbb{E}| \utdd(z,x+h)-\utdd(z,x)|^{2n}\\
    &=\sum_{\alpha,\beta\in\mathbb{Z}^{nd}}\mathbb{E}\prod_{j=1}^n\hat{v}_{\alpha_j}(z)\hat{v}^\ast_{\beta_j}(z)\mathds{1}_{|\alpha_{j,\mu}|\le N_x}\mathds{1}_{|\beta_{j,\mu}|\le N_x}(e^{i\Delta k\alpha_j\cdot(x+h)}-e^{i\Delta k\alpha_j\cdot x})(e^{-i\Delta k\beta_j\cdot(x+h)}-e^{-i\Delta k\beta_j\cdot x})\\
    &\le |h|^{2n}\sum_{\alpha,\beta\in\mathbb{Z}^{nd}}|\mathbb{E}[\prod_{j=1}^n\hat{v}_{\alpha_j}(z)\hat{v}^\ast_{\beta_j}(z)]|\prod_{j=1}^n\mathds{1}_{|\alpha_{j,\mu}|\le N_x}\mathds{1}_{|\beta_{j,\mu}|\le N_x}|\Delta k \alpha_j||\Delta k \beta_j|\le C_{n}|h|^{2n}
\end{aligned}
    \end{equation*} 
    due to the boundedness of the Fourier coefficients in the TV sense. 
    \end{proof}
\end{proposition}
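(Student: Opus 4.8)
The plan is to reduce the increment bound to a uniform estimate on weighted moments of the discrete Fourier coefficients of $\utdd$, which in turn follows from the Duhamel expansion of Section~\ref{sec:pathwise} together with the combinatorial estimates of Lemma~\ref{lem:estims}. First I would exploit the product structure $\sfu^{\theta\Delta}_\delta=\utdd\,\chi_L$ and write
\[
\sfu^{\theta\Delta}_\delta(z,x+h)-\sfu^{\theta\Delta}_\delta(z,x)=\big(\utdd(z,x+h)-\utdd(z,x)\big)\chi_L(x)+\utdd(z,x+h)\big(\chi_L(x+h)-\chi_L(x)\big),
\]
and use $|a+b|^{2n}\le C(n)(|a|^{2n}+|b|^{2n})$. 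The second term is handled by the Lipschitz bound $|\chi_L(x+h)-\chi_L(x)|\le\|\chi_L\|_{1,\infty}|h|$ together with the uniform pointwise moment bound $\mathbb{E}|\utdd(z,x)|^{2n}=\mu_{n,n}[\utdd](z,x,\dots,x)\le C(z,n)$, which follows from the Duhamel-expansion moment bounds of Section~\ref{sec:pathwise} (as in Lemma~\ref{lem:stabpsitpq}) uniformly in all discretization parameters.

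For the first term I would expand $\utdd$ in its trigonometric-polynomial form $\utdd(z,x)=\sum_{|l_\mu|\le N_x}\hat v_l(z)e^{i\Delta k l\cdot x}$, so that $\utdd(z,x+h)-\utdd(z,x)=\sum_{l}\mathds{1}_{|l_\mu|\le N_x}\hat v_l(z)(e^{i\Delta k l\cdot(x+h)}-e^{i\Delta k l\cdot x})$, and extract from each mode a factor $|e^{i\Delta k l\cdot h}-1|\le|\Delta k l\cdot h|\le|\Delta k l|\,|h|$. Raising to the power $2n$ and taking expectations yields
\[
\mathbb{E}\big|\utdd(z,x+h)-\utdd(z,x)\big|^{2n}\le|h|^{2n}\sum_{\alpha,\beta\in\mathbb{Z}^{nd}}\Big|\mathbb{E}\prod_{j=1}^n\hat v_{\alpha_j}(z)\hat v_{\beta_j}^\ast(z)\Big|\prod_{j=1}^n\mathds{1}_{|\alpha_{j,\mu}|\le N_x}\mathds{1}_{|\beta_{j,\mu}|\le N_x}|\Delta k\alpha_j|\,|\Delta k\beta_j|,
\]
and the whole content of the proof is that the displayed double sum is bounded by a constant $C(z,n)$ independent of $\theta,\Delta z,\Delta k,K_k,\Delta x$.

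To close this I would substitute, for each factor $\hat v_{\alpha_j}$ and $\hat v_{\beta_j}^\ast$, the Duhamel series derived in Section~\ref{sec:pathwise}, whose $n_j$-th term has integrand $e^{i\rG_{n_j}}\big(\prod\mathds{1}_{|\cdot|\le N_x}\big)\hat{\mV}^\theta(\cdot)\,\hat u^0_{\alpha_j-A_{n_j}\kkk}$. Taking the expectation pairs all the Gaussian factors $\hat\nu^\theta$ across the $2n$ copies, and the weight $\prod_j|\Delta k\alpha_j|\prod_j|\Delta k\beta_j|$ is absorbed by means of \eqref{eq:controlkl} into $\aver{\cdot}$-weights on the source coefficients $\hat u^0$ — controlled by the weighted assumption \eqref{eqn:u0_bound} on $u_0$ — and $\aver{k}$-weights on the random-medium factors. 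The resulting multiple sum over interaction orders and over $\mathbb{Z}^{nd}$ then converges by the Gaussian-pairing bounds \eqref{eq:controlaver}--\eqref{eq:controlaver_sum} of Lemma~\ref{lem:estims}, which after the change of variables $s-t\mapsto\theta s$ are uniform in $\theta$, and by the super-algebraic decay of the source and correlation weights. The main obstacle is bookkeeping rather than analysis: one must carry the $2n$ Fourier indices through $2n$ independent Duhamel expansions, distribute the $2n$ linear weights $|\Delta k\alpha_j|,|\Delta k\beta_j|$ so each lands on a factor with enough decay, and verify that the final bound is genuinely independent of the cutoff $N_x$ and of $\Delta k$; no estimate beyond Lemma~\ref{lem:estims} and \eqref{eqn:u0_bound} is required.
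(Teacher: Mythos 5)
Your proposal is correct and follows essentially the same route as the paper: the same splitting of $\sfu^{\theta\Delta}_\delta=\utdd\chi_L$ into two terms, the Lipschitz/moment bound for the cutoff term, the discrete Fourier expansion with the factor $|e^{i\Delta k l\cdot h}-1|\le|\Delta k l||h|$, and the reduction to weighted (TV-type) bounds on moments of the Fourier coefficients $\hat v_l$ established via the Duhamel expansion of Section~\ref{sec:pathwise} together with Lemma~\ref{lem:estims} and \eqref{eqn:u0_bound}. You simply spell out in more detail the final summability step that the paper cites as "boundedness of the Fourier coefficients in the TV sense."
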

The combination of these two results (convergence of finite dimensional moments and tightness) directly leads to the following convergence in distribution result~\cite{kunita1997stochastic}.
\begin{theorem}[Convergence of processes]\label{thm:conv_proc}
For $0\le z\le Z$ fixed and $\alpha\in(0,1)$, the process $x\mapsto \sfu^{\theta\Delta}_\delta(z,x)$ converges in law to the solution of the It\^{o}-Schr\"{o}dinger equation~\eqref{eqn:Ito_supp} on $C^{0,\alpha}(\mathbb{R}^d)$ as $(\theta,\Delta z,\Delta k,K_k^{-1},\Delta x)\to (0,0,0,0,0)$. 
\end{theorem}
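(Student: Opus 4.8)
The plan is to combine Proposition~\ref{prop:conv_dist} (convergence of finite-dimensional distributions) with Proposition~\ref{prop:tightness} (tightness) via the standard Kolmogorov-type criterion for weak convergence of continuous processes on $C^{0,\alpha}(\Rm^d)$, following the scheme of \cite{kunita1997stochastic}. First I would fix $z\in[0,Z]$ and view each $\sfu^{\theta\Delta}_\delta(z,\cdot)$ as a random element of $C^{0,\alpha}(\Rm^d)$ (its continuity being automatic since it is a trigonometric polynomial times the smooth window $\chi_L$, supported in a fixed compact set $[-L/2,L/2]^d$; the common compact support is important so that we may work on a fixed compact set rather than all of $\Rm^d$). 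Then I would verify the two hypotheses of the weak-convergence criterion: (i) the laws of $\{\sfu^{\theta\Delta}_\delta(z,\cdot)\}$ are tight in $C^{0,\alpha}(\Rm^d)$ for every $\alpha\in(0,1)$, and (ii) the finite-dimensional distributions converge to those of $x\mapsto \us(z,x)$.

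Step (ii) is exactly Proposition~\ref{prop:conv_dist}, so nothing further is needed there. For step (i), the Kolmogorov--Chentsov tightness criterion requires a moment bound of the form $\E|\sfu^{\theta\Delta}_\delta(z,x+h)-\sfu^{\theta\Delta}_\delta(z,x)|^{2n}\le c(z,n)|h|^{2n}$ with $c$ uniform in all discretization parameters, together with a uniform bound on $\E|\sfu^{\theta\Delta}_\delta(z,x_0)|^{2n}$ at a single base point; the former is precisely Proposition~\ref{prop:tightness}, and the latter follows from the same Duhamel-plus-Gaussian-moment estimates used to prove Lemma~\ref{lem:stabpsitpq} and Lemma~\ref{lem:boundut} (boundedness of $\mu_{p,p}$ uniformly in the parameters). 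Choosing $n$ large enough that $2n > d$ (more precisely $2n(1-\alpha)>d$, which holds for all $\alpha\in(0,1)$ once $n>d/2$) upgrades these moment bounds to tightness in the Hölder norm $\|\cdot\|_{C^{0,\alpha}}$ on the fixed compact support, and hence tightness in $C^{0,\alpha}(\Rm^d)$. Combining (i) and (ii): every subsequence has a weakly convergent sub-subsequence (by tightness), and any weak limit must have the finite-dimensional distributions of $\us(z,\cdot)$ (by (ii)); since those finite-dimensional distributions determine the law on $C^{0,\alpha}(\Rm^d)$ (the evaluation maps generate the Borel $\sigma$-algebra on this Polish space), the limit is unique, so the whole family converges in law to $\us(z,\cdot)$.

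I do not expect a genuine obstacle here, as both ingredients are already established; the only point that needs a word of care is the passage from the pointwise-in-$h$ moment bound to tightness in the \emph{Hölder} space $C^{0,\alpha}$ rather than merely in $C^0$. This is handled by the standard Kolmogorov continuity / Garsia--Rodemich--Rumsey route, which turns $\E|\sfu^{\theta\Delta}_\delta(z,x+h)-\sfu^{\theta\Delta}_\delta(z,x)|^{2n}\le c|h|^{2n}$ into a uniform bound on $\E\|\sfu^{\theta\Delta}_\delta(z,\cdot)\|_{C^{0,\alpha}}^{2n}$ for $\alpha<1-\frac{d}{2n}$, and then Markov's inequality gives the tightness of the laws on closed balls of $C^{0,\alpha}(\Rm^d)$ (which are compact in $C^{0,\alpha'}$ for $\alpha'<\alpha$, or one keeps $\alpha$ fixed and takes $n$ correspondingly large). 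The common compact support again ensures there is no loss at spatial infinity. With these pieces assembled, the theorem follows directly.
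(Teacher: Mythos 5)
Your proposal is correct and follows essentially the same route as the paper: the paper likewise deduces the theorem directly by combining Proposition~\ref{prop:conv_dist} (convergence of finite-dimensional distributions) with the moment criterion of Proposition~\ref{prop:tightness} for tightness in the H\"older topology, citing \cite{kunita1997stochastic}. Your additional details (uniform moment bound at a base point, the choice $2n(1-\alpha)>d$, and the subsequence/identification argument) merely spell out the standard steps the paper leaves implicit.
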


\subsection{Formal derivation of paraxial model from Helmholtz model}\label{app:background}

We start with the scalar Helmholtz equation
\begin{equation*}
    \partial_z^2p+\Delta_xp+k_0^2n^2(z,x)p=\delta'_0(z)u_0(x),
\end{equation*}
where $u_0(x)$ denotes the incident beam at $z=0$ and the equation is posed on the domain $z>0$ and $x\in\Rm^d$.
We are interested in the high frequency, long propagation regime, and so assume $k_0$ large while we rescale the $z$ coordinate to consider a more convenient scaling $ k_0\to\frac{k_0}{\theta},\quad z \to \frac{z}{\theta}$. The parameter $\theta$ may be interpreted as the ratio of the typical wavelength with the typical correlation length of the turbulent medium, with a typical value in applications of order $10^{-6} / 10^{-3} = 10^{-3}\ll1$. It is therefore natural to consider the $\theta\ll1$ as well as the $\theta\to0$ regimes. 

We also assume weak turbulence fluctuations about a slowly varying mean so that $  n^2(z,x)=n_0^2(\theta z)\big(1+\theta^{3/2}\nu(z,x)\big)$. While {\em weak} locally, the influence of the turbulence is of order $O(1)$ after long-distance propagation.

We now consider the more slowly varying {\em envelope} $u$ given by $ p(z,x)=u(\theta z,x)e^{\frac{i\phi(z)}{\theta}}$. Substituting this in the original Helmholtz equation gives
\begin{equation*}
\begin{aligned}
    2i\partial_z\phi(z)\partial_su(\theta z,x)-\frac{1}{\theta^2}\big(\partial_z\phi(z)\big)^2u(\theta z,x)+\frac{i}{\theta}\partial_{z}^2\phi(z)u(\theta z,x)&\\
    +\Delta_xu(\theta z,x)+\frac{k_0^2}{\theta^2}n_0^2(\theta z)[1+\theta^{3/2}\nu(z,x)]u(\theta z,x)&=0,\quad z>0;\qquad 
    u(0,x)&=u_0(x),
\end{aligned}
    \end{equation*}
    where we have formally ignored the backscattering term $\theta^2\partial_z^2u$. Comparing terms at $O(\theta^{-2})$, we set $ \big(\partial_z\phi(z)\big)^2=k_0^2n_0^2(\theta z)$ so that $\partial_z^2\phi(z)=\pm\theta k_0\partial_sn_0(\theta z)=\pm \theta k_0\partial_{z'}n_0(z')$.     In particular, when $n_0=1$, we get back the classical paraxial ansatz with $\phi(z)=\pm k_0z$. This gives (keeping the plus sign for instance and dropping the primes on $z$)
    \begin{equation*}
        \begin{aligned}
            2ik_0n_0(z)\partial_z\ut+\Delta_x\ut+ik_0\partial_zn_0(z)\ut+\frac{k_0^2}{\theta^{1/2}}n_0^2(z)\nu\Big(\frac{z}{\theta},x\Big)\ut&=0,\quad z>0;\qquad 
            \ut(0,x)&=u_0(x).
        \end{aligned}
    \end{equation*}
    For the transformation $v^\theta(z,x)=\ut(z,x)e^{\vartheta(z)}$, where $ \vartheta(z)=\frac{1}{2}\int_0^z\frac{n_0'(s)}{n_0(s)}\mathrm{d}s=\frac{1}{2}\log\Big(\frac{n_0(z)}{n_0(0)}\Big)$,
    we have
    \begin{equation*}
        \begin{aligned}
            \partial_zv^\theta&=\frac{i}{2k_0n_0(z)}\Delta_xv^\theta+\frac{ik_0}{2\theta^{1/2}}n_0(z)\nu\Big(\frac{z}{\theta},x\Big)v^\theta,\quad v^\theta(0,x)=u_0(x)\,.
        \end{aligned}
    \end{equation*}
For simplicity we set $k_0=1/2$ and rescale $\nu\to 4\nu$.  This justifies the general class of equations of the form \eqref{eqn:PWE_supp} analyzed in this paper.

\subsection{Additional numerical examples}\label{sec:add_num_eg}
For Gaussian initial conditions of the form $ u_0(x)=e^{-\frac{1}{2}x^2}$, the  first and second moment solutions in the It\^o-Schr\"{o}dinger regime are given analytically as~\cite{garnier2016fourth, bal2024complex} 
\begin{align*}
    \mu_{1,0}(z,x)&= e^{-\sqrt{\pi}z/2}A_0(z,x)\\
    \mu_{1,1}(z,x,x)&= \frac{1}{\sqrt{4\pi}}\int_{\mathbb{R}}e^{-|\xi|^2(z^2+\frac{1}{4})}e^{i\xi\cdot x}e^{(\sigma\sqrt{\pi}\int_0^z(e^{-4\pi^2|\xi|^2 s^2}-1)\mathrm{d}s)}\mathrm{d}\xi\,,
\end{align*}
where $A_0(z,x)=\frac{1}{\vartheta(z)}e^{-\frac{|x|^2}{2\vartheta^2(z)}}, \vartheta(z)=(1+2iz)^{1/2}$ denotes the solution to the paraxial approximation in a homogeneous medium. In Figure \ref{fig:SM1}, we display the statistical averages from the simulation of the paraxial and It\^o-Schr\"{o}dinger models and compare them with the analytical solutions. In the first panel, we compare the real part of the mean field at $z=Z$ computed using the reference numerical solution $u_{\rm{ref}}$ with the corresponding values from the analytical expression. The numerical approximation agrees well with the analytical solution. Similarly in the second panel, we plot the second moment at $z=Z$ computed using $u_{\rm{ref}}$. This agrees well with the analytical solution as well. 
\begin{figure}
  \centering
    \includegraphics[width=0.45\textwidth]{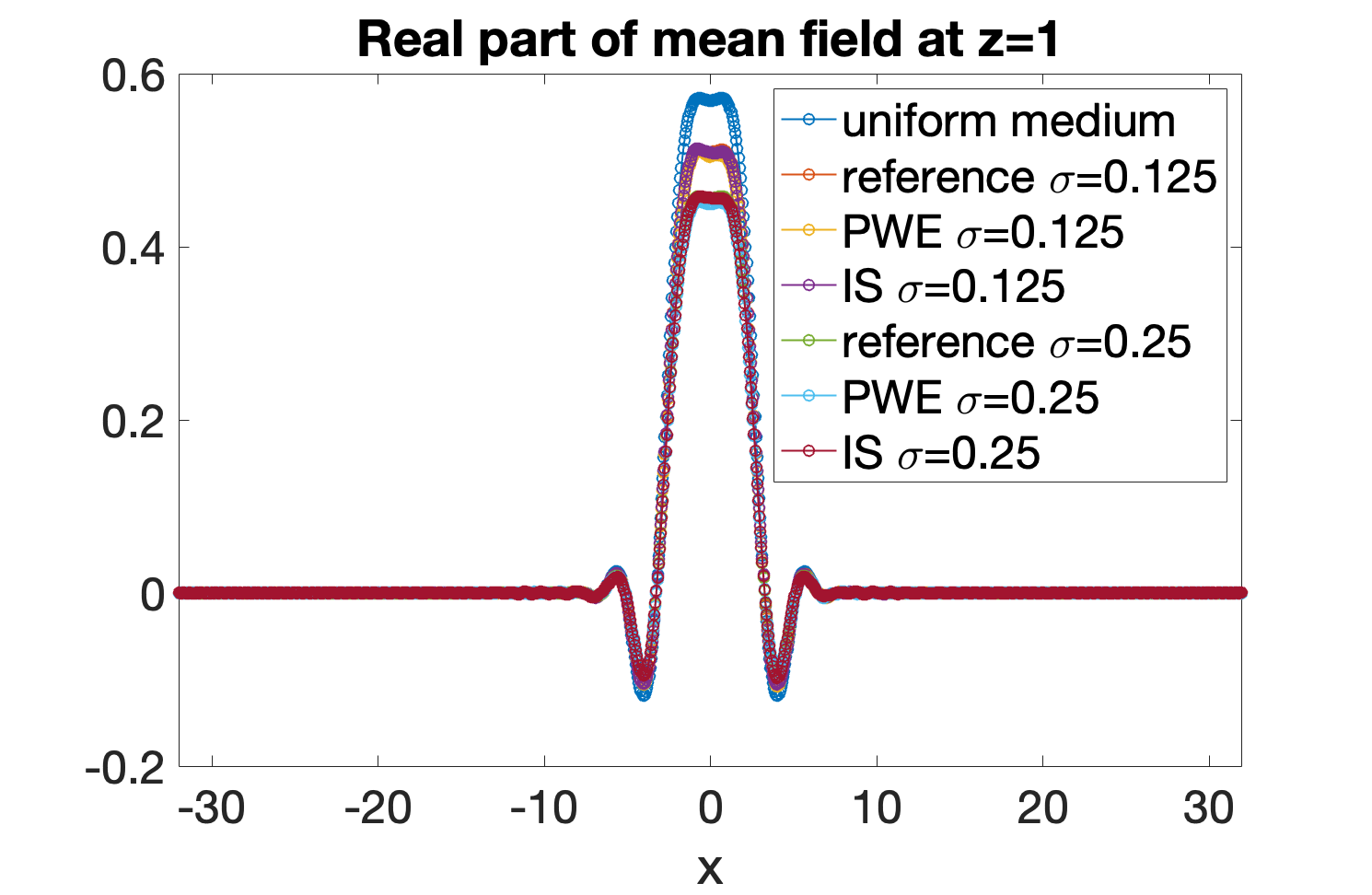}
        \includegraphics[width=0.45\textwidth]{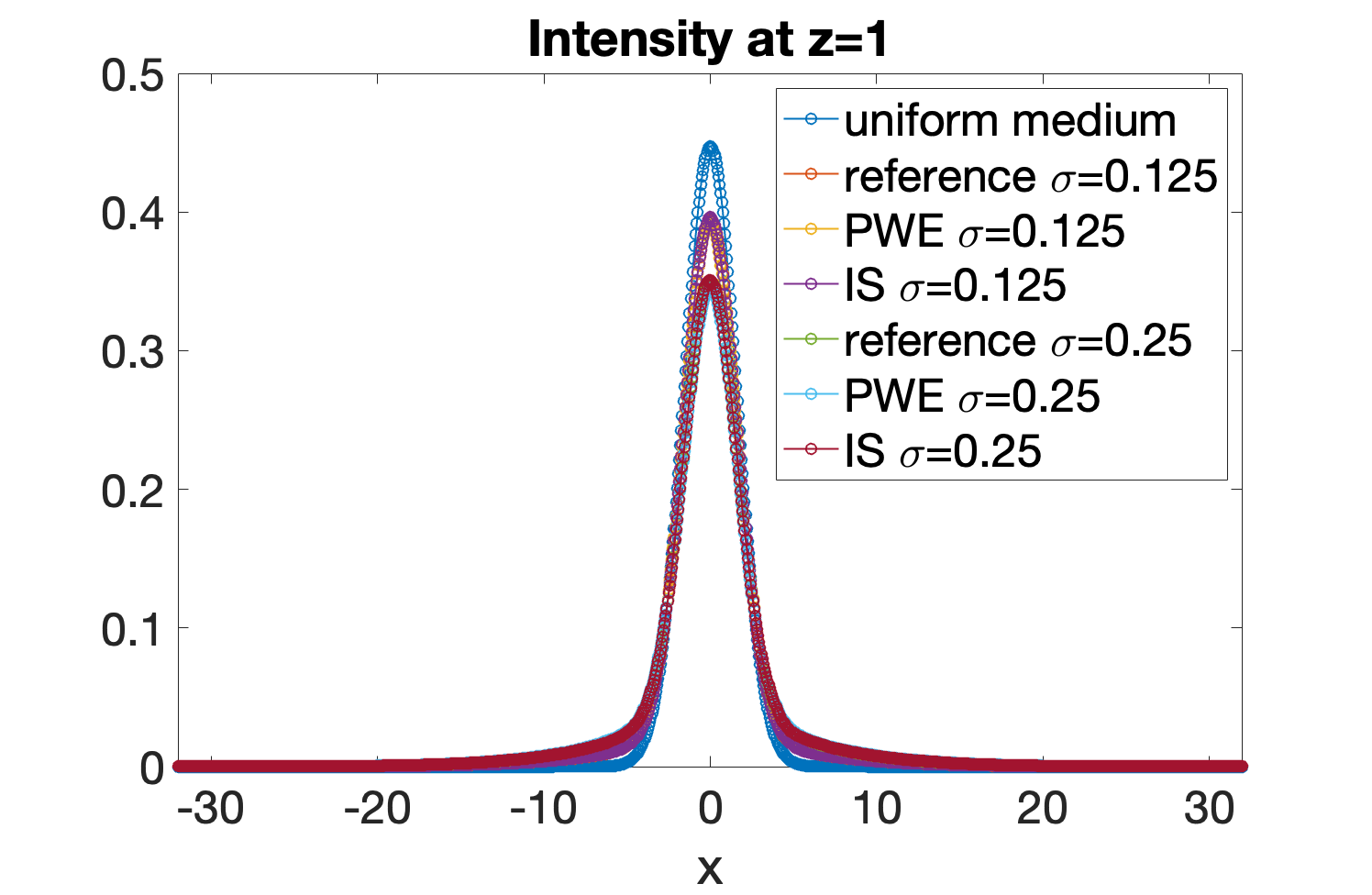}
    \caption{Statistical averages from the first-order simulations for the paraxial approximation and It\^o-Schr\"{o}dinger models. 
    The left and right panels display the real part of wavefield mean value and its intensity, respectively.}   \label{fig:SM1}
\end{figure}

\paragraph{Speckle phenomena}

The first and third panels of Figure~\ref{fig:evolution_Ito_sigma} plot the beam profile (absolute value) as a function of $x$ and $z$ for increasing values of $\sigma$. 
\begin{figure}[htp]
\centering
    \includegraphics[scale=0.1525]{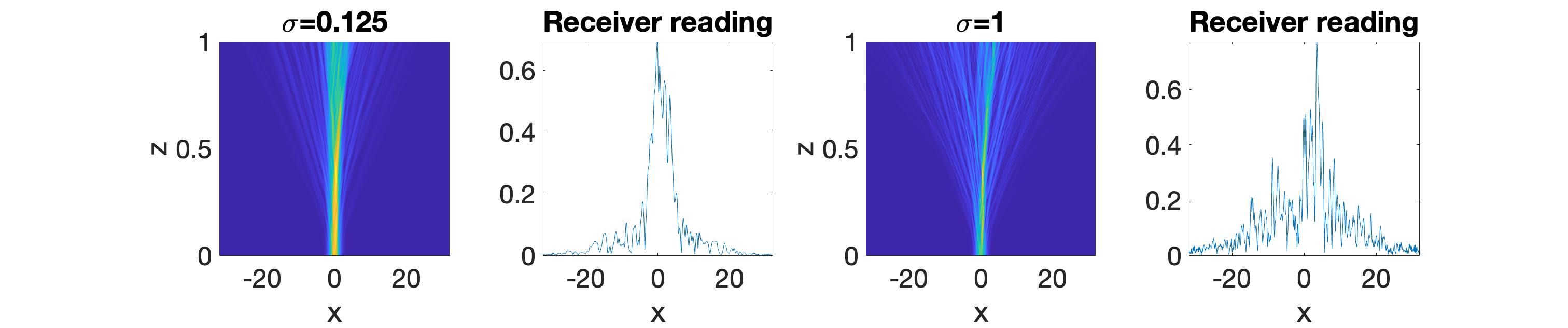}
                      \includegraphics[width=0.35\textwidth]{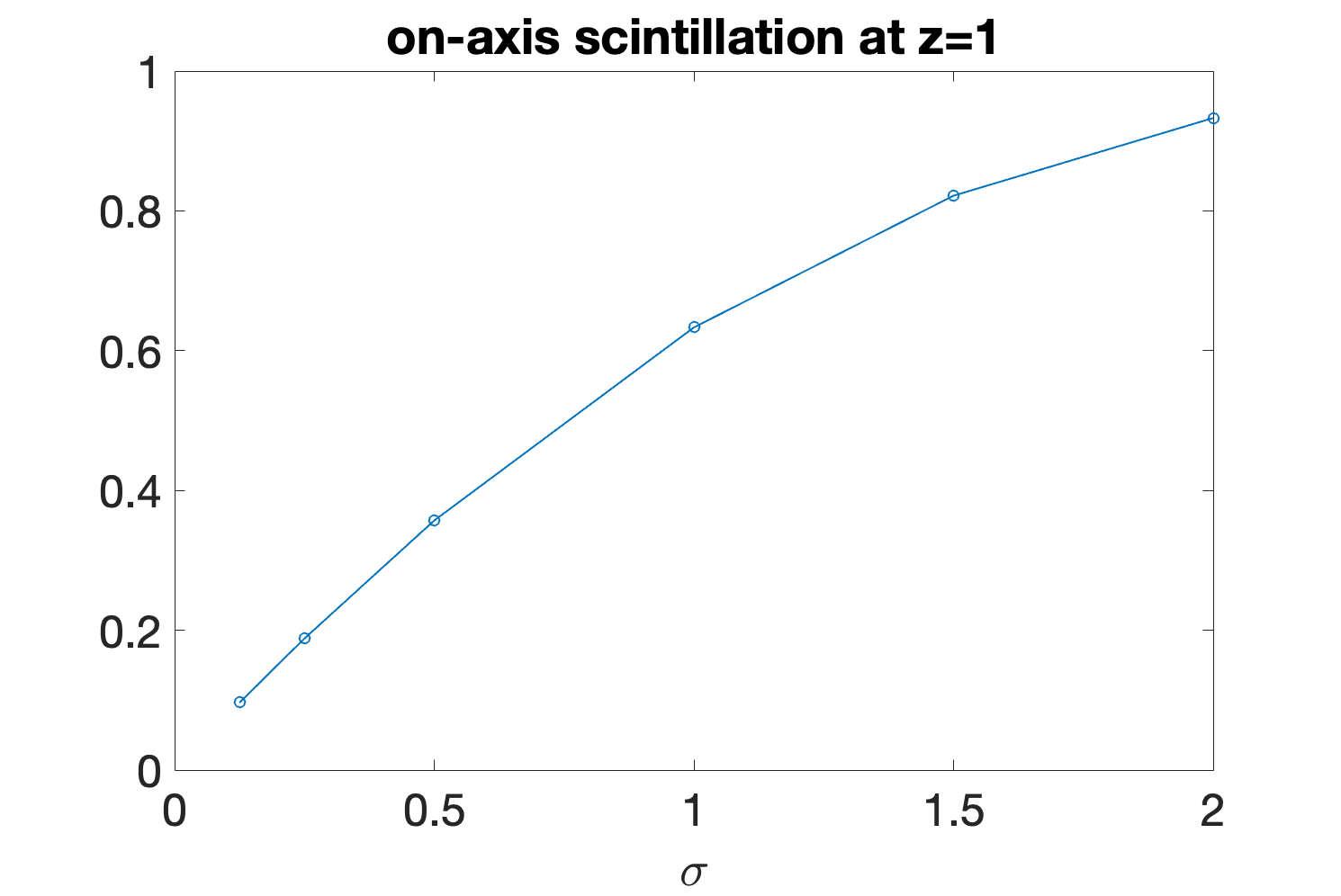}
     \caption{Plots showing evolution of the beam profile and cross-section of the reference solution at $z=1$ (absolute value) for different $\sigma$. The first panel corresponds to the beam evolution for $\sigma=0.125$ while the second panel plots the absolute value of the signal at $z=Z$. The third and fourth panels correspond to the same plots for $\sigma=1$. The last panel plots the scintillation index at the beam's center.}
      \label{fig:evolution_Ito_sigma}           
\end{figure}
The corresponding receiver readings at $z=Z$ are also plotted on the second and fourth panels. The cross section of the beam appears more jagged as $\sigma$ increases. This is consistent with speckle formation in highly turbulent regimes where the beam is expected to rapidly lose coherence \cite{andrews2023laser}. The rest of the panels plot the difference in solutions $|\udd-u_{\rm{ref}}|$ for varying $\Delta z$. The last panel plots the normalized variance of intensity (scintillation) $S=\frac{\mathbb{E}|\udd|^4}{(\mathbb{E}|\udd|^2)^2}-1$ computed from It\^o-Schr\"{o}dinger simulations at the center of the receiver. Scintillation is a commonly used metric for comparing the quality of optical signals and is a useful physical object~\cite{andrews2023laser}. In appropriate strong turbulence scalings, scintillation is expected to saturate to unity~\cite{garnier2016fourth, bal2024complex}. This is consistent with the trend in the simulations observed here.

We conclude with the numerical simulation of a beam in three dimensions corresponding to $d=2$. The source profile is taken as $u_0(x,y)=e^{-\frac{(x^2+y^2)}{2}}$. We plot the cross section of the beam profile (absolute value) at the final receiver location in Figure~\ref{fig:evol_3D} for varying $\sigma$. Here, $\Delta z=2^{-10}$ and the final distance $Z=1$. $\Delta x=\Delta y=2^{-3}$ and $L=2^6$ so that there are $2^9\times 2^9$ grid points in the lateral dimensions. As expected in high turbulence regimes, the beam profile develops fine scale variations consistent with speckle formation. 
\begin{figure}
  \centering
     \includegraphics[scale=0.15]{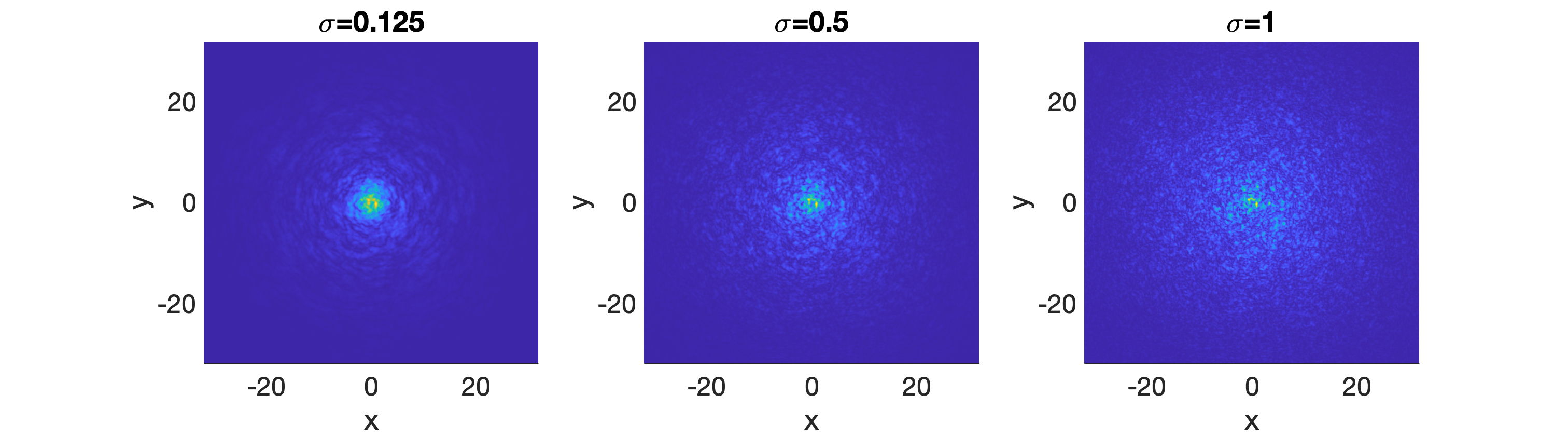}
    \caption{Receiver reading at $z=1$ for increasing $\sigma$.}
    \label{fig:evol_3D}         
\end{figure}

\bibliographystyle{siam}
\bibliography{Reference}

\end{document}